\documentclass[12pt,a4paper]{amsart}
\usepackage[utf8]{inputenc}
\usepackage[T1]{fontenc}
\usepackage{amssymb,amscd}
\usepackage{amsfonts}
\usepackage[top=35mm, bottom=35mm, left=30mm, right=30mm]{geometry}
\usepackage[colorlinks=true,citecolor=blue]{hyperref}
\usepackage{mathptmx}
\usepackage{eucal}
\usepackage{graphicx}
\usepackage{mathrsfs}
\usepackage{amssymb}
\usepackage{amsmath}
\usepackage{amsthm}
\usepackage{amscd}

\usepackage{xcolor}
\usepackage{cite}

\theoremstyle{plain}
\newtheorem{main}{Theorem}

\newtheorem{theorem}{Theorem}[section]
\newtheorem{proposition}[theorem]{Proposition}
\newtheorem{lemma}[theorem]{Lemma}
\newtheorem{corollary}[theorem]{Corollary}
\newtheorem{definition}[theorem]{Definition}
\newtheorem{remark}[theorem]{Remark}
\newtheorem{question}[theorem]{Question}
\newtheorem{example}[theorem]{Example}

\newcommand{\M}{\mathcal M}
\newcommand{\Me}{\mathcal M^e}
\newcommand{\B}{\mathcal B}

\makeatletter

\newcommand{\Rmnum}[1]{\expandafter\@slowromancap\romannumeral #1@}
\makeatother

\begin{document}
\title{Empirical variational principles for preimage entropies}
\author{Tao Wang}
\address{MOE-LCSM, School of Mathematics and Statistics, Hunan Normal University, Changsha, Hunan 410081, P. R. China}
\email{twang@hunnu.edu.cn}

\author{Yi Yang}
\address{School of Mathematics (Zhuhai), Sun Yat-sen University, Zhuhai 519082, P. R. China}
\email{yangy699@mail.sysu.edu.cn}

\subjclass[2020]{37B40, 37A35, 37D35}
\keywords{preimage entropy, empirical metric preimage entropy, variational principle, ergodic measure, uniform separation of preimages}


\begin{abstract}
Preimage entropy measures the complexity generated by the inverse-image structure of a non-invertible dynamical system. For a continuous map $f:X\to X$ on a compact metric space, Hurley's pointwise topological preimage entropies $h_m(f)$ and $h_p(f)$ are natural invariants measuring the complexity of preimage sets. The question of whether they admit unconditional variational principles in terms of suitable measure-theoretic counterparts remains open. In this paper we resolve it by using empirical metric preimage entropies $h^*_{m,\mu}(f)$ and $h^*_{p,\mu}(f)$, defined by restricting preimage fibers to orbit segments whose empirical measures are close to a prescribed invariant measure $\mu$. We prove the variational principles
$$
h_m(f)=\sup_{\mu\in\mathcal M_f(X)}h^*_{m,\mu}(f),
\qquad
h_p(f)=\sup_{\mu\in\mathcal M_f(X)}h^*_{p,\mu}(f)
$$
for every continuous map on a compact metric space. We also show that, in general, the set of all invariant measures in these formulas cannot be replaced by the set of ergodic invariant measures. We then compare the empirical entropies with the pointwise metric preimage entropy $h_{m,\mu}(f)$. For every ergodic invariant measure $\mu$, we prove $h^*_{p,\mu}(f)\ge h_{m,\mu}(f)$. Moreover, if $f$ has uniform separation of preimages, then for every ergodic invariant measure $\mu$,
$$
h^*_{m,\mu}(f)=h^*_{p,\mu}(f)=h_{m,\mu}(f).
$$
Examples show that $h_{m,\mu}(f)$ is not comparable with the empirical quantities in general. We further introduce a resolving-partition property, weaker than uniform separation of preimages, under which the variational principle for $h_m(f)$ and $h_{m,\mu}(f)$ holds. Finally, we establish corresponding variational principles for preimage pressure and give an example showing that uniform separation of preimages does not imply forward expansiveness.	
\end{abstract}


\maketitle

\tableofcontents

\section{Introduction}

Entropy is one of the central invariants in both topological and measure-theoretic dynamics. For a homeomorphism, the forward and backward orbit structures of a point are symmetric in many respects. For a non-invertible continuous map, however, the inverse images of a point may branch and spread in a complicated way. Classical topological entropy captures forward orbit separation, but it does not by itself reveal the complexity arising from the inverse structure. This observation led to a class of entropy-like quantities, now usually called \emph{preimage entropies}, whose purpose is to measure the non-invertible part of a dynamical system by examining iterated preimage sets.

Let $(X,d)$ be a compact metric space and let $f:X\to X$ be a continuous map. The first systematic ideas in this direction go back to Langevin and Przytycki \cite{LangevinPrzytycki1992}. Hurley \cite{Hurley1995} later introduced and studied pointwise topological preimage entropies for continuous maps on compact metric spaces. In current notation, two key quantities are
\[
 h_m(f)=\lim_{\varepsilon\to0}\limsup_{n\to\infty}
       \frac1n\log \sup_{x\in X}s(n,\varepsilon,f^{-n}x)
\]
and
\[
 h_p(f)=\sup_{x\in X}\lim_{\varepsilon\to0}\limsup_{n\to\infty}
       \frac1n\log s(n,\varepsilon,f^{-n}x),
\]
where $s(n,\varepsilon,Z)$ denotes the largest cardinality of an $(n,\varepsilon)$-separated subset of $Z$. Clearly, $h_p(f)\leq h_m(f)$. These quantities vanish for homeomorphisms but can be positive for non-invertible maps. Hurley also linked preimage entropy to branch entropy and topological entropy. This line of work was subsequently extended by Nitecki and Przytycki \cite{NiteckiPrzytycki1999}, Fiebig, Fiebig, and Nitecki \cite{FiebigFiebigNitecki2003}, and Nitecki \cite{Nitecki2003}. In particular, it was shown that if $f$ has uniform separation of preimages, then $h_p(f)=h_m(f)$, and if $f$ is forward expansive, then $h_p(f)=h_m(f)=h_{\rm top}(f)$. Clearly, forward expansiveness implies uniform separation of preimages (see Section \ref{subsec:two conditions} for the definitions). However, to the best of our knowledge, no example of a non-invertible continuous map that has uniform separation of preimages but is not forward expansive has been reported in the literature. In this paper, we provide such an example in the final section; see Section \ref{sec:an example}.

A second line of development aimed to construct measure-theoretic counterparts of topological preimage entropy and to formulate variational principles. This raises a natural and important question:

\begin{question}\label{que:main question}
Can one introduce measure-theoretic counterparts of $h_m(f)$ or $h_p(f)$ for an $f$-invariant measure $\mu$, and establish a variational principle linking topological preimage entropy to its measure-theoretic version?
\end{question}

A major advance was made by Wu and Zhu \cite{WuZhu2021}, who introduced a pointwise metric preimage entropy
\[
 h_{m,\mu}(f)=\sup_{\alpha}\limsup_{n\to\infty}
 \frac1n H_\mu(\alpha_0^{n-1}\mid f^{-n}\mathcal B),
\]
where $\alpha$ ranges over all finite measurable partitions. This definition is closely aligned with Hurley's $h_m(f)$ and serves as a natural measure-theoretic counterpart of $h_m(f)$, since the conditioning sigma-algebra $f^{-n}\mathcal B$ corresponds to the $n$-step preimage fiber. Under the condition of uniform separation of preimages, Wu and Zhu proved the variational principle
\[
 h_m(f)=\sup_{\mu\in\mathcal M_f(X)} h_{m,\mu}(f)
 =\sup_{\mu\in\mathcal M_f^e(X)} h_{m,\mu}(f),
\]
where $\mathcal M_f(X)$ and $\mathcal M_f^e(X)$ denote the sets of all $f$-invariant and ergodic $f$-invariant Borel probability measures on $X$, respectively. They also related $h_{m,\mu}(f)$ to folding entropy and stable entropy, established upper semi-continuity results, and obtained a Shannon-McMillan-Breiman type theorem. Related results and further developments can be found in \cite{ChengNewhouse2005, WuZhu2022, Wang2024}.

A natural question then arises: does a variational principle hold for $h_m(f)$ and $h_{m,\mu}(f)$ without any additional assumptions? Recently, Shi, Yan, and Zeng \cite{ShiYanZeng2023} showed that there exists a continuous map $f$ on a compact metric space $X$ for which the following strict inequality holds:
\[
 \sup_{\mu\in\mathcal M_f(X)} h_{m,\mu}(f) < h_m(f).
\]
Thus, a variational principle between $h_m(f)$ and $h_{m,\mu}(f)$ fails without extra conditions. Consequently, the answer to Question~\ref{que:main question} remains open.

In this paper, we show that notions of measure-theoretic preimage entropy based on empirical measures can be used to establish variational principles for $h_m(f)$ and $h_p(f)$. These notions were introduced and studied by Cheng, Li, and Wu \cite{ChengLiWu2023}, following the approach of Pfister and Sullivan \cite{PfisterSullivan2007}. We call these quantities the \emph{empirical metric preimage entropies} and denote them by $h_{m,\mu}^*(f)$ and $h_{p,\mu}^*(f)$, respectively; see Section~\ref{sec:pre} for precise definitions. In general, we have $h_{p,\mu}^*(f)\leq h_{m,\mu}^*(f)$. These empirical preimage entropies record which invariant measures arise as limits of the orbit distributions of large preimage separated sets. In summary, we are able to prove variational principles for $h_m(f)$ and $h_p(f)$ for general continuous maps without any additional assumptions, thereby resolving Question~\ref{que:main question}. Our first main result is the following:

\begin{main}\label{thm:variational principle}
For every continuous map $f:X\to X$ on a compact metric space,
\[
 h_m(f)=\sup_{\mu\in\mathcal{M}_f(X)}h_{m,\mu}^{*}(f),\qquad
 h_p(f)=\sup_{\mu\in\mathcal{M}_f(X)}h_{p,\mu}^{*}(f).
\]
\end{main}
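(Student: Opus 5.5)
Without the precise definitions from Section~\ref{sec:pre}, I will work with the natural Pfister--Sullivan type formulation: for a neighborhood $F$ of $\mu$ in $\mathcal M(X)$, put $X_{n,F}=\{y:\ \mathcal E_n(y)\in F\}$, where $\mathcal E_n(y)=\frac1n\sum_{j=0}^{n-1}\delta_{f^jy}$. Then
\[
h^*_{m,\mu}(f)=\lim_{\varepsilon\to0}\inf_{F\ni\mu}\limsup_{n\to\infty}\frac1n\log\sup_{x\in X}s\bigl(n,\varepsilon,f^{-n}x\cap X_{n,F}\bigr),
\]
and $h^*_{p,\mu}$ is defined in the same way, except that the supremum over $x$ is taken outside the limits. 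The two inequalities ``$\ge$'' are immediate, since $f^{-n}x\cap X_{n,F}\subset f^{-n}x$ gives $h^*_{m,\mu}(f)\le h_m(f)$ and $h^*_{p,\mu}(f)\le h_p(f)$ for every $\mu$. The real work is the reverse inequalities, and I would argue both by compactness of $\mathcal M(X)$ and a pigeonhole over a finite cover.

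For $h_m$, fix $\varepsilon>0$ and pick $n_k\to\infty$ and points $x_k$ such that $\frac1{n_k}\log s(n_k,\varepsilon,f^{-n_k}x_k)\to h_m(f,\varepsilon)$, the inner quantity in the definition of $h_m$. Let $E_k\subset f^{-n_k}x_k$ be a maximal $(n_k,\varepsilon)$-separated set. Cover the compact set $\mathcal M(X)$ by finitely many balls $B_1,\dots,B_N$ of radius $\delta$. By pigeonhole, some ball $B_{i_k}$ contains the empirical measures of at least $|E_k|/N$ points of $E_k$. Since $N$ is independent of $n$, this loses no exponential rate. Passing to a subsequence, I take $i_k=i$ constant. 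The empirical measures $\mathcal E_{n_k}(y)$ become asymptotically invariant as $n_k\to\infty$.

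Now let $\delta=\delta_j\to0$ and run a diagonal argument. At each scale $\delta_j$, choose nested balls $B^{(j)}$ with $\overline{B^{(j+1)}}\subset$ the $2\delta_j$-neighborhood of $B^{(j)}$, each retaining full exponential rate $h_m(f,\varepsilon)$ along a further subsequence. Their centers converge to some $\mu_\varepsilon$, which is invariant. Every neighborhood $F$ of $\mu_\varepsilon$ contains some $B^{(j)}$, so
\[
\limsup_n\frac1n\log\sup_x s(n,\varepsilon,f^{-n}x\cap X_{n,F})\ge h_m(f,\varepsilon).
\]
The obstacle is that $\mu_\varepsilon$ depends on $\varepsilon$, while $h^*_{m,\mu}$ takes $\varepsilon\to0$ after fixing $\mu$. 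I would handle this by letting $\varepsilon_l\to0$ and taking a weak$^*$ limit $\mu$ of the $\mu_{\varepsilon_l}$. Given $\varepsilon'$ and a neighborhood $F$ of $\mu$, for large $l$ we have $\varepsilon_l<\varepsilon'$ and $F$ is a neighborhood of $\mu_{\varepsilon_l}$. Monotonicity in $\varepsilon$ then only gives the bound $h_m(f,\varepsilon')$ (not $h_m(f,\varepsilon_l)$), which is what we need for a fixed $\varepsilon'$. Letting $\varepsilon'\to0$ yields $h^*_{m,\mu}(f)\ge h_m(f)$. If the paper's definition instead fixes $F$ uniformly, the conclusion $\sup_\mu\ge h_m$ still follows by choosing $\mu_\varepsilon$ with $h^*_{m,\mu_\varepsilon}(f)\ge h_m(f,\varepsilon)-o(1)$.

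For $h_p$, fix $x$ with pointwise entropy close to $h_p(f)$ and run the same pigeonhole and diagonal argument with $x$ fixed. Since the point is fixed throughout, the resulting $\mu$ satisfies $h^*_{p,\mu}(f)\ge h_p(f,x)$, and taking the supremum over $x$ finishes the proof. The delicate step in both cases is the exchange of the $\varepsilon\to0$ limit with the choice of $\mu$, resolved above by the monotonicity of $s(n,\varepsilon,\cdot)$ in $\varepsilon$.
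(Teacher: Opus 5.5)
Your construction of $\mu_\varepsilon$ is the paper's argument: you pigeonhole empirical measures over finite covers of $\mathcal M(X)$ at shrinking radii, pass to subsequences, and take a diagonal limit. This proves that for every $a<h_m(f,\varepsilon)$ there is $\mu\in\mathcal M_f(X)$ with $h^*_{m,\mu}(f,\varepsilon)\ge a$. Your nesting of the balls is, if anything, more careful than the paper's write-up.

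\textbf{The step that fails} is the passage to a weak$^*$ limit $\mu$ of the $\mu_{\varepsilon_l}$. For a neighbourhood $F$ of $\mu$ and $l$ large, what you actually know is
\[
\limsup_{n\to\infty}\frac1n\log\sup_{x\in X}s\bigl(n,\varepsilon_l,f^{-n}x\cap X_{n,F}\bigr)\ge h_m(f,\varepsilon_l)\ge h_m(f,\varepsilon'),
\]
which is a bound at the finer scale $\varepsilon_l<\varepsilon'$. Since $s(n,\varepsilon',Z)\le s(n,\varepsilon_l,Z)$, monotonicity transfers nothing to scale $\varepsilon'$. Moreover, the admissible $l$ must tend to infinity as $F$ shrinks, so you get no bound at any fixed scale.

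The conclusion you are after, a single $\mu$ with $h^*_{m,\mu}(f)\ge h_m(f)$, would mean the supremum is always attained, and that is false in general. Take one-sided subshifts of finite type with entropies $c_k\uparrow c$, embedded as clopen pieces whose diameters tend to $0$ and which accumulate at a common fixed point $q$. Then:
\begin{itemize}
\item $h_m(f)=c$;
\item every invariant measure has $h^*_{m,\mu}(f)<c$, because a measure charging two clopen pieces has $h^*_{m,\mu}(f)=0$ (no orbit segment has empirical measure near it), and a measure on a single piece gives at most $c_k$;
\item your limit measure is $\delta_q$, and $h^*_{m,\delta_q}(f)=0$.
\end{itemize}

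\textbf{The repair} is the remark you relegated to the conditional sentence about a definition that ``fixes $F$ uniformly''. It is valid for exactly the definition you wrote down, which is the paper's, and it is how the paper concludes. Let $h^*_{m,\mu}(f,\varepsilon)$ denote the quantity inside $\lim_{\varepsilon\to0}$. Because $s(n,\varepsilon,\cdot)$ is nonincreasing in $\varepsilon$, for each fixed $\mu$ one has $h^*_{m,\mu}(f)=\sup_{\varepsilon>0}h^*_{m,\mu}(f,\varepsilon)$. Hence $h^*_{m,\mu_\varepsilon}(f)\ge h^*_{m,\mu_\varepsilon}(f,\varepsilon)\ge h_m(f,\varepsilon)$, and so $\sup_{\mu}h^*_{m,\mu}(f)\ge\sup_{\varepsilon}h_m(f,\varepsilon)=h_m(f)$. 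The theorem only asserts an equality of suprema, so the dependence of $\mu_\varepsilon$ on $\varepsilon$ is harmless and no limiting measure is needed.

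The same correction is needed for $h_p$. Your claim that one $\mu$ satisfies $h^*_{p,\mu}(f)\ge h_p(f,x)$ rests on the same faulty step. Instead, for fixed $x$ and $\varepsilon$, the pigeonhole argument gives $\mu_{x,\varepsilon}$ with $h^*_{p,\mu_{x,\varepsilon}}(f,x,\varepsilon)\ge\limsup_{n}\frac1n\log s(n,\varepsilon,f^{-n}x)$. Then
\[
h^*_{p,\mu_{x,\varepsilon}}(f)\ge\lim_{\varepsilon'\to0}h^*_{p,\mu_{x,\varepsilon}}(f,x,\varepsilon')\ge h^*_{p,\mu_{x,\varepsilon}}(f,x,\varepsilon),
\]
and you finish by taking the supremum over $\varepsilon$ and then over $x$.
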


However, we will show that the set of $f$-invariant measures $\mathcal{M}_f(X)$ cannot be replaced by the set of ergodic $f$-invariant measures $\mathcal{M}_f^e(X)$ in Theorem~\ref{thm:variational principle}. That is,

\begin{main}\label{thm:ergodic measure}
There exist a compact metric space $(X,d)$ and a continuous map $f:X\to X$ such that
\[
 h_p(f)>\sup_{\mu\in\mathcal{M}_f^e(X)}h_{m,\mu}^{*}(f).
\]
\end{main}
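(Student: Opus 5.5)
We will construct an explicit example in which a single point has exponentially many preimage-separated orbit segments, while every such large separated family has empirical measures that stay away from all ergodic measures. The mechanism is that each fiber $f^{-n}x$ carries many orbits whose empirical measures are near a non-ergodic measure $\mu=\frac12(\delta_a+\delta_b)$. Meanwhile, orbits with empirical measure near an ergodic measure carry no preimage complexity.

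\textbf{Construction.} Take a one-sided shift-like system in which the only sources of branching are forced transitions between two fixed points $a$ and $b$. For instance, take $X$ a closed subset of $\{a,b,\ast\}^{\mathbb N}$, or a countable compactum. Orbit segments of length $n$ ending at a fixed point $x$ (say $x=b^\infty$) come in roughly $2^{cn}$ types, obtained by choosing where to switch between long blocks of $a$'s and blocks of $b$'s. The switching pattern is arranged so that each segment spends about half its time near $a$ and half near $b$. Concretely, we use sequences that alternate blocks of $a$ and $b$ of lengths constrained to keep the frequencies balanced. We make $X$ small enough that the ergodic measures are exactly $\delta_a$ and $\delta_b$, for example by letting the block lengths grow along the orbit so that no periodic mixing orbit survives. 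We then verify three things: $f$ is continuous, $f^{-n}(b^\infty)$ contains $e^{cn}$ points that are $(n,\varepsilon)$-separated, and so $h_p(f)\ge c>0$.

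\textbf{Upper bound for ergodic measures.} For $\nu\in\{\delta_a,\delta_b\}$ (and any other ergodic measures, which should be supported on fixed points), we bound $h^*_{m,\nu}(f)$. The empirical measure of an orbit segment of length $n$ being within a small neighbourhood $F$ of $\delta_a$ forces the segment to spend at least $(1-\eta)n$ steps near $a$. We count the separated points in $f^{-n}y\cap\{z:\mathcal E_n(z)\in F\}$ uniformly in $y$. With the block structure, a segment dominated by $a$ has at most $\eta n$ switch positions, so the count is at most $\binom{n}{\eta n}\cdot C^{\eta n}$. Its exponential rate tends to $0$ as $\eta\to0$, hence $h^*_{m,\delta_a}(f)=0$. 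The same argument gives $h^*_{m,\delta_b}(f)=0$, so the supremum over ergodic measures is $0<h_p(f)$. This matches the definitions of Section~\ref{sec:pre}, using the quantifier order of neighbourhoods $F\downarrow\mu$ before $n\to\infty$.

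\textbf{Main obstacle.} The delicate point is designing $X$ so that three things hold together: the preimage branching really produces exponentially many separated points, the ergodic measures are still only the fixed points, and balanced orbits still exist with positive preimage entropy. An unrestricted full shift on $\{a,b\}$ would give many ergodic measures with positive entropy. We would first try a subshift defined by the condition that every block of symbols is a concatenation of runs whose lengths are at least a slowly growing function of their distance from the end, i.e. from the image point. This kills positive-entropy ergodic measures. A single fiber can still contain $e^{cn}$ choices, provided the runs are of length about $\log$ scale near the end. If that turns out to be inconsistent, the fallback is to replace the symbolic system by a disjoint union of countably many finite subsystems $X_k$ accumulating on a fixed point. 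In $X_k$, the fibers of length $k$ have $2^{ck}$ balanced branches, and the only ergodic measures are fixed-point masses, which keeps the ergodic upper bound immediate.
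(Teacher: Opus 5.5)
Your plan has a fatal gap: the system you describe cannot exist, whatever the details. Since $f^{-n}x\subset X$, we have $s(n,\varepsilon,f^{-n}x)\le s(n,\varepsilon,X)$, hence $h_p(f)\le h_m(f)\le h_{\mathrm{top}}(f)$. By the variational principle for topological entropy (with ergodic decomposition), $h_{\mathrm{top}}(f)=\sup_{\nu\in\mathcal M_f^e(X)}h_\nu(f)$. Suppose the only ergodic measures are $\delta_a,\delta_b$, or fixed-point masses as in your fallback, or $X$ is a countable compactum. Then every ergodic measure has zero entropy, so $h_{\mathrm{top}}(f)=0$ and therefore $h_p(f)=0$. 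The three requirements you list as the ``main obstacle'' are incompatible, not merely delicate.

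This shows up concretely in the subshift you sketch. There are at most $e^{n(1+\log\ell)/\ell+o(n)}$ binary words of length $n$ whose runs, apart from the two end runs, have length at least $\ell$. So if the minimal run length grows with the distance from the endpoint, $f^{-n}(b^\infty)$ has only $e^{o(n)}$ points.

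The proposed witness $\mu=\frac12(\delta_a+\delta_b)$ also fails, in any system where $a\ne b$ are fixed points, because your own switch-counting applies to it verbatim. A small neighbourhood of $\mu$ forces all but $\eta n$ times into small balls around $a$ and $b$. By continuity at the fixed points, no orbit passes from one ball to the other in a single step, so there are at most $\eta n$ switches. Let $N_\varepsilon$ be the size of a cover of $X$ by sets of diameter less than $\varepsilon$, and let $H$ be the binary entropy function. Then the number of $(n,\varepsilon)$-separated such segments is at most $e^{n(H(\eta)+\eta\log(2N_\varepsilon))+o(n)}$. Hence $h^*_{m,\mu}(f)=0$, and this measure cannot carry the preimage complexity.

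The missing idea is that ergodic measures of positive entropy must be present; the real work is to show they nevertheless have zero empirical preimage entropy. The paper does this in the following steps.

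\begin{itemize}
\item \emph{Injective layers.} The positive-entropy ergodic measures sit on an invariant set where the map is injective. Each visible layer $L_t$ ($0\le t<1$) is conjugate to the two-sided full shift, and $F$ is a homeomorphism there, so $F^{-n}x$ is a single point for $x\in L_t$.
\item \emph{Transient branching.} All non-injectivity is confined to the finite towers $T_m$, whose tops $p_m$ are all sent to one fixed point $q$. The weight $1-t$ collapses the symbolic fibre at the top, which keeps $F$ continuous. Tower orbits of length $m+1$ ending at $q$ give $h_p(F)\ge\frac12\log 2$.
\item \emph{Measures on the layers.} For ergodic $\nu$ on $L_{t_0}$, take a neighbourhood forcing most of the time into levels in $J$, with $\overline J\subset[0,1)$. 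It excludes every orbit segment ending at some $p_m$ or at $q$, so $s(n,\varepsilon,P_n(x,G))\le 1$.
\item \emph{The measure $\delta_q$.} Because the fibre collapses near $t=1$, there are at most $C_\varepsilon(n+1)$ base orbits up to $\varepsilon/8$. Within each, only the at most $\gamma n$ effectively visible times carry symbolic information. This gives $h^*_{m,\delta_q}(F)=0$.
\end{itemize}

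Your counting for $\delta_a$ resembles this last step. However, your design has no counterpart of the injective layers carrying the positive-entropy ergodic measures, and that is exactly the ingredient the statement requires.
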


These results in turn lead naturally to the following question:

\begin{question}\label{que:relation}
What is the relationship between the empirical metric preimage entropies $h_{m,\mu}^*(f)$, $h_{p,\mu}^*(f)$ and the metric preimage entropy $h_{m,\mu}(f)$?
\end{question}

We address this question in the present paper by obtaining the following result:

\begin{main}\label{thm:relations for ergodic case}
Let $f:X\to X$ be continuous. If $\mu\in\mathcal{M}_f^e(X)$, then
\[
 h_{p,\mu}^{*}(f)\ge h_{m,\mu}(f).
\]
Furthermore, if $f$ has uniform separation of preimages and $\mu\in\mathcal{M}_f^e(X)$, then
\[
 h_{m,\mu}^{*}(f)=h_{p,\mu}^{*}(f)= h_{m,\mu}(f).
\]
\end{main}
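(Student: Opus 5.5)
The plan is to prove the chain
\[
h_{m,\mu}(f)\le h^*_{p,\mu}(f)\le h^*_{m,\mu}(f)\le h_{m,\mu}(f),
\]
where the first inequality holds for every ergodic $\mu$, the middle one is the general inequality noted after the definitions in Section~\ref{sec:pre}, and the last one uses uniform separation of preimages. This gives the first assertion of the theorem and, under uniform separation of preimages, the three equalities. Throughout, write $X_{n,F}$ for the set of points $x$ whose empirical measure $\frac1n\sum_{i<n}\delta_{f^ix}$ lies in a neighbourhood $F$ of $\mu$.

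\emph{Lower bound $h^*_{p,\mu}\ge h_{m,\mu}$.} Fix $\delta>0$ and a finite partition $\alpha$ with $\mu(\partial\alpha)=0$ and $\limsup_n \frac1n H_\mu(\alpha_0^{n-1}\mid f^{-n}\mathcal B)\ge h_{m,\mu}(f)-\delta$; small-boundary partitions suffice, by the usual approximation argument. Disintegrate $\mu$ along $f^{-n}\mathcal B$ into conditional measures $\mu^{(n)}_y$ supported on $f^{-n}y$. Since $f^n_*\mu=\mu$,
\[
H_\mu(\alpha_0^{n-1}\mid f^{-n}\mathcal B)=\int H_{\mu^{(n)}_y}(\alpha_0^{n-1})\,d\mu(y).
\]
By Birkhoff's theorem, $\mu(X_{n,F})\to1$. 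A Chebyshev-type argument then produces a set $A_n$ with $\mu(A_n)\ge c>0$, for a constant $c$ independent of $n$ along a subsequence, such that for $y\in A_n$ we have both $\mu^{(n)}_y(X_{n,F})\ge 1/2$ and $H_{\mu^{(n)}_y}(\alpha_0^{n-1})\ge n(h_{m,\mu}-2\delta)$. Standard counting then yields at least $e^{n(h_{m,\mu}-3\delta)}$ atoms of $\alpha_0^{n-1}$ meeting $f^{-n}y\cap X_{n,F}$. The step I expect to be the main obstacle is that $h^*_{p,\mu}$ requires a \emph{single} base point that works for infinitely many $n$. I would handle it with the observation
\[
\mu\Bigl(\limsup_n A_n\Bigr)\ge\limsup_n\mu(A_n)\ge c>0,
\]
which gives a point $y$ lying in $A_n$ for infinitely many $n$. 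To pass from atoms to $(n,\varepsilon)$-separated points, choose $\varepsilon$ smaller than a Lebesgue-type scale of $\alpha$ off a small neighbourhood $U$ of $\partial\alpha$. Orbit points visiting $U$ are controlled through the empirical measure: choose $F$ so that it forces visit frequency at most $\mu(U)+\delta$. With this, each Bowen ball $B_n(x,\varepsilon)$ meets at most $e^{n\eta(\delta)}$ atoms, with $\eta(\delta)\to0$. Diagonalising in $F$, $\varepsilon$ and $\delta$ gives the bound.

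\emph{Upper bound under uniform separation of preimages.} Fix $\varepsilon$ below the separation constant and a decreasing basis of neighbourhoods $F_k$ of $\mu$. For each $k$, pick $n$ and $x_n$ with an $(n,\varepsilon)$-separated set $E_n\subset f^{-n}x_n\cap X_{n,F_k}$ of size $\approx e^{n(h^*_{m,\mu}-\delta)}$. Form the Misiurewicz measures $\nu_n=\frac1{|E_n|}\sum_{z\in E_n}\delta_z$ and $\sigma_n=\frac1n\sum_{i<n}f^i_*\nu_n$. Every point of $E_n$ has empirical measure in $F_k$, so any limit of $\sigma_n$ (diagonally in $k$) is exactly $\mu$. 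The remaining task is to redo Wu--Zhu's variational-principle argument: $\nu_n$ is supported in one fiber, and uniform separation of preimages ensures that a partition of small diameter separates $E_n$ at every relevant time. Their estimate then gives
\[
\frac1n\log|E_n|\lesssim \frac1q H_{\sigma_n}(\alpha_0^{q-1}\mid f^{-q}\mathcal B)+o(1).
\]
Upper semicontinuity in the measure, which Wu and Zhu established under this hypothesis, lets this pass to the limit, yielding $h_{m,\mu}(f)\ge h^*_{m,\mu}(f)-\delta$. The delicate point here is checking that their conditional-entropy bookkeeping survives the additional restriction to $X_{n,F_k}$. Since that restriction only shrinks the separated set, I expect it to go through verbatim.
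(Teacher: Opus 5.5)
Your chain $h_{m,\mu}\le h^*_{p,\mu}\le h^*_{m,\mu}\le h_{m,\mu}$ is the right one. The lower bound, however, has a step that fails as written.

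\textbf{The counting step.} From $H_{\mu^{(n)}_y}(\alpha_0^{n-1})\ge n(h-2\delta)$ and $\mu^{(n)}_y(X_{n,F})\ge\frac12$ you cannot conclude that $e^{n(h-3\delta)}$ atoms meet $f^{-n}y\cap X_{n,F}$. An entropy lower bound does not bound individual atom masses. Nothing in your two conditions prevents $\mu^{(n)}_y$ from putting mass $\frac12$ on a single atom meeting $X_{n,F}$ and spreading the other half evenly over $e^{2n(h-2\delta)}$ atoms inside $X_{n,F}^c$. Its entropy is then $\log2+n(h-2\delta)$, yet only one atom meets $f^{-n}y\cap X_{n,F}$.

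This is exactly where the paper uses the Shannon--McMillan--Breiman theorem of Wu and Zhu. Its good set $Z_n$ is defined pointwise by $I_\mu(\alpha_0^{n-1}\mid f^{-n}\mathcal B)\ge n(h-\eta)$. Hence every atom meeting $Z_n\cap f^{-n}x_*$ has conditional mass at most $e^{-n(h-\eta)}$, and conditional mass $>\frac12$ of $Z_n$ is enough.

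Your route can be repaired without SMB by raising the threshold: require $\mu^{(n)}_y(X_{n,F})\ge1-\epsilon$ with $\epsilon\log\#\alpha<\delta$. Let $N$ be the number of atoms meeting $f^{-n}y\cap X_{n,F}$. Grouping these atoms against the rest gives $H_{\mu^{(n)}_y}(\alpha_0^{n-1})\le\log2+\log N+\epsilon n\log\#\alpha$. Moreover $\int\mu^{(n)}_y(X_{n,F}^c)\,d\mu(y)=\mu(X_{n,F}^c)\to0$, so by Markov's inequality the stronger condition fails only on a set of measure $o(1)$. Your $A_n$ therefore still has measure at least $c/2$.

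\textbf{Uniformity of the base point.} The base point must serve every neighbourhood simultaneously, because in $h^*_{p,\mu}(f,x,\varepsilon)=\inf_{G\ni\mu}h^*_{p,\mu}(f,x,\varepsilon;G)$ the point $x$ is fixed before the infimum. Your observation $\mu(\limsup_nA_n)\ge c$ handles only one $F$. Your good sets have measure merely $\ge c$, so intersecting over a basis is not automatic. In the paper the good sets have measure tending to $1$, so Borel--Cantelli over a countable basis $G_r$ gives a full-measure set of admissible $x_*$.

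Your version still works, by monotonicity. Fix $\alpha$, $U$, $\varepsilon$ and the visit-frequency neighbourhood first, and let $F_k$ be a decreasing basis inside it. Then $X_{n,F_{k+1}}\subset X_{n,F_k}$ gives $A^{(k+1)}_n\subset A^{(k)}_n$, and continuity from above yields $\mu\bigl(\bigcap_k\limsup_nA^{(k)}_n\bigr)\ge c>0$. No diagonalisation in $\varepsilon$ or $\delta$ is needed, since $\sup_x$ sits outside $\lim_{\varepsilon\to0}$.

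With these two repairs your lower bound is a valid and more elementary alternative to the paper's argument. Ergodicity enters only through Birkhoff, and no SMB theorem is needed. You use a null-boundary partition with a spanning-set count, where the paper uses compact inner approximations with Hamming greedy selection. The cost is that you get only a positive-measure set of good endpoints rather than a full-measure one.

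\textbf{Upper bound.} Here the paper simply cites Cheng--Li--Wu (Proposition 2.8), which is what your Misiurewicz sketch reconstructs. Take the $F_k$ convex so that $\sigma_n\in F_k$. You have also misplaced the role of uniform separation. Separating $E_n$ by $\alpha_0^{n-1}$ needs only $\operatorname{diam}\alpha<\varepsilon$. Uniform separation is needed because a partition of diameter below the separation constant is $f$-resolving, so $\alpha_0^{k-1}\vee f^{-k}\mathcal B=\mathcal B$. That identity is what makes the block decomposition of $H(\alpha_0^{n-1}\mid f^{-n}\mathcal B)$ into terms $H(\alpha_0^{q-1}\mid f^{-q}\mathcal B)$ valid. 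In general the conditioning $f^{-q}(\alpha_0^{n-q-1}\vee f^{-(n-q)}\mathcal B)$ is coarser than $f^{-q}\mathcal B$, and the inequality points the wrong way.
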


Moreover, without any extra conditions, we show that there is no definite relation among $h_{m,\mu}^*(f)$, $h_{p,\mu}^{*}(f)$, and $h_{m,\mu}(f)$; some interesting examples are given to illustrate this.

\begin{main}\label{thm:relations for general case}
There exist a compact metric space $(X,d)$, a continuous map $f:X\to X$, and a measure $\mu\in\mathcal{M}_f(X)$ such that
\[
h_{m,\mu}^{*}(f) < h_{m,\mu}(f).
\]
Moreover, there exist a compact metric space $(X,d)$, a continuous map $f:X\to X$, and a measure $\mu\in\mathcal{M}_f(X)$ such that
\[
h_{p,\mu}^{*}(f) > h_{m,\mu}(f).
\]
\end{main}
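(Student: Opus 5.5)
Both parts of the theorem are existence statements, so I would prove each by an explicit example. By Theorem~\ref{thm:relations for ergodic case}, the first inequality cannot hold at an ergodic measure, so the first example must use a non-ergodic $\mu$. The second part I would derive from the Shi--Yan--Zeng counterexample combined with Theorem~\ref{thm:variational principle}, rather than building a new map.

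\emph{First inequality.} Let $X=\Sigma_2\sqcup\{p\}$, the full one-sided $2$-shift $\sigma$ together with an isolated fixed point $p$, placed at distance $\ge 1$ from $\Sigma_2$. Let $f=\sigma$ on $\Sigma_2$ and $f(p)=p$. Take $\mu=\tfrac12(\nu+\delta_p)$ with $\nu$ the $(\tfrac12,\tfrac12)$-Bernoulli measure.
\begin{itemize}
\item \emph{Lower bound for $h_{m,\mu}(f)$.} The partition $\{\Sigma_2,\{p\}\}$ is $f$-invariant, so it is $f^{-n}\mathcal B$-measurable. Conditioning on it shows $H_\mu(\alpha_0^{n-1}\mid f^{-n}\mathcal B)$ equals $\tfrac12 H_\nu(\cdot\mid\cdot)+\tfrac12 H_{\delta_p}(\cdot\mid\cdot)$. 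Using the first-coordinate partition on $\Sigma_2$, this gives $h_{m,\mu}(f)\ge\tfrac12\log2>0$.
\item \emph{Upper bound for $h^*_{m,\mu}(f)$.} Every orbit stays in one component, so every empirical measure $\frac1n\sum_{i<n}\delta_{f^iy}$ gives mass $0$ or $1$ to $\Sigma_2$. Since $\Sigma_2$ is clopen, a small weak$^*$ neighbourhood $F$ of $\mu$ contains no such measure. Hence the fibers $f^{-n}x\cap X_{n,F}$ are all empty and $h^*_{m,\mu}(f)$ equals $0$ (or $-\infty$, depending on the convention fixed in Section~\ref{sec:pre}).
\end{itemize}
In either case $h^*_{m,\mu}(f)<h_{m,\mu}(f)$.

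\emph{Second inequality.} Let $(X,f)$ be the Shi--Yan--Zeng example, where $\sup_{\nu\in\mathcal M_f(X)}h_{m,\nu}(f)<h_m(f)$. I would proceed in three steps.
\begin{enumerate}
\item Verify directly from their construction that $h_p(f)=h_m(f)$. The natural route is to exhibit a single point $x$ whose $n$-preimages realise the $\limsup$ defining $h_m$. I expect such a point to exist because their example is built from a single branching tree of preimages.
\item Apply the second identity of Theorem~\ref{thm:variational principle}:
\[
\sup_{\mu}h^*_{p,\mu}(f)=h_p(f)=h_m(f)>\sup_\nu h_{m,\nu}(f).
\]
\item Obtain a single measure from this strict inequality between suprema. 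Either show that $\mu\mapsto h^*_{p,\mu}(f)$ is upper semicontinuous, so the supremum is attained at some $\mu_0$, which then satisfies $h^*_{p,\mu_0}(f)>h_{m,\mu_0}(f)$. Or, without attainment, simply choose $\mu$ with $h^*_{p,\mu}(f)>\sup_\nu h_{m,\nu}(f)\ge h_{m,\mu}(f)$; this already suffices.
\end{enumerate}
The second option makes upper semicontinuity unnecessary. The proof therefore reduces to step 1.

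\emph{Main obstacle.} I expect step 1, checking $h_p=h_m$ in the Shi--Yan--Zeng example, to be the main difficulty. If it fails there, I would modify their example: glue countably many of their building blocks along preimage chains of one common point, so that a single fiber $f^{-n}x$ carries all the separated sets. I would then re-verify that no invariant measure gains conditional entropy under the gluing; this is where the bookkeeping of $H_\mu(\alpha_0^{n-1}\mid f^{-n}\mathcal B)$ must be redone.
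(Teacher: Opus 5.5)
\textbf{First part.} This is correct and is essentially the paper's argument. The paper uses $\Sigma_2^+\sqcup\Sigma_3^+$ with $\mu=\tfrac12\nu_2+\tfrac12\nu_3$ (Example~\ref{exa:relation1}), whereas you use a shift plus an isolated fixed point and compute the conditional entropy componentwise. The mechanism is the same: empirical measures cannot charge both clopen components, so $P_n(x,G)=\emptyset$ for a suitable neighbourhood $G$.

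\textbf{Second part: genuine gap.} You reduce everything to step~1, which you do not prove. Moreover, the input you start from says nothing about $h_p$. Since $h_{m,\mu}(f)\le h_p(f)$ for every $\mu\in\mathcal M_f(X)$ (Lemma~\ref{thm:leq hp}), the gap $\sup_\nu h_{m,\nu}(f)<h_m(f)$ in the Shi--Yan--Zeng system is compatible with $\sup_\nu h_{m,\nu}(f)=h_p(f)<h_m(f)$. In that case the $h_p$-identity of Theorem~\ref{thm:variational principle} only gives $\sup_\mu h^*_{p,\mu}(f)=\sup_\nu h_{m,\nu}(f)$, and no single $\mu$ need satisfy a strict inequality. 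Your expectation that their system comes from ``a single branching tree'' is unverified. The gluing fallback leaves undone exactly the nontrivial check, namely that no invariant measure acquires positive $h_{m,\mu}$ after gluing. Note also that step~1 asks for more than you need: the only property required is $h_p(f)>\sup_{\nu}h_{m,\nu}(f)$.

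\textbf{What is missing.} You need a concrete system with $h_p(f)>\sup_{\nu}h_{m,\nu}(f)$. The paper uses Wang's example (Example~\ref{exa:relation2}), where $h_p(f)\ge\log 2$ while $h_{m,\mu}(f)=0$ for every invariant $\mu$. Your step~3 (second option) then finishes immediately.

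The map $F$ from the proof of Theorem~\ref{thm:ergodic measure} also works:
\begin{itemize}
\item the paper shows $h_p(F)\ge\tfrac12\log2$;
\item every invariant measure is carried by the compact set $L$;
\item $F$ is injective on $L$ and $F(L)\subset L$.
\end{itemize}
Hence $F^{-n}\mathcal B=\mathcal B$ modulo $\mu$, so $h_{m,\mu}(F)=0$ for every $\mu\in\mathcal M_F(X)$.
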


On the other hand, certain classical interval maps---such as the full tent map---do not satisfy the uniform separation of preimages condition. This leads us to ask whether the uniform separation of preimages can be weakened so that it covers such classical examples while still ensuring that the variational principle between $h_m(f)$ and $h_{m,\mu}(f)$ holds. Some progress in this direction is also obtained in this paper. More precisely, we introduce a notion of \emph{resolving-partition property} and prove the following:

\begin{main}\label{thm:resolving partition}
If the continuous map $f:X\to X$ has the resolving-partition property, then
\[
h_p(f)=h_m(f)=\sup_{\mu\in\mathcal{M}_f(X)}h_{m,\mu}(f)
       =\sup_{\mu\in\mathcal{M}_f^e(X)}h_{m,\mu}(f).
\]
\end{main}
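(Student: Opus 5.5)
We have not yet seen the definition of the resolving-partition property. We take it to mean the following: there is a finite Borel partition $\alpha$ (or a family of partitions with small diameter) such that, on each preimage fiber $f^{-n}x$, any two distinct points of an $(n,\varepsilon)$-separated set lie in distinct atoms of $\alpha_0^{n-1}$, with $\varepsilon$ uniform in $n$ and $x$. Equivalently, the atoms of $\alpha_0^{n-1}$ intersected with $f^{-n}x$ have uniformly bounded $(n,\varepsilon)$-spanning numbers. This weakens uniform separation of preimages, where one can simply take $\alpha$ of diameter below the separation constant.

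The chain of inequalities to prove is
\[
\sup_{\mu\in\mathcal M^e_f}h_{m,\mu}\le\sup_{\mu\in\mathcal M_f}h_{m,\mu}\le h_m(f)=h_p(f)\le\sup_{\mu\in\mathcal M^e_f}h_{m,\mu}.
\]
The first inequality is trivial. The second is the inequality $h_{m,\mu}(f)\le h_m(f)$ for every invariant $\mu$, due to Wu--Zhu / Cheng--Newhouse. For a fixed partition $\beta$ one bounds $H_\mu(\beta_0^{n-1}\mid f^{-n}\mathcal B)$ by integrating over the conditional measures on the fibers $f^{-n}x$. If this proof is not available without extra assumptions, we will instead go through Theorem~\ref{thm:relations for ergodic case} and the ergodic decomposition, using $h_{m,\mu}\le h^*_{p,\mu}$ for ergodic $\mu$ together with Theorem~\ref{thm:variational principle}. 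Since $h_{m,\mu}$ is defined with a $\limsup$, the ergodic decomposition step needs care. The cleanest route is to use $h_{m,\mu}=h_\mu(f)-$(stable/folding) correction formulas only if they were established earlier; otherwise we reduce to ergodic $\mu$ by an affinity argument for the conditional entropies.

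The core is the final inequality $h_m(f)\le\sup_{\mu\in\mathcal M^e_f}h_{m,\mu}(f)$, together with $h_p=h_m$. We follow the Misiurewicz-style proof used by Wu--Zhu, replacing uniform separation by the resolving property.
\begin{enumerate}
\item Fix $\varepsilon$ and choose $x_n$ and maximal $(n,\varepsilon)$-separated sets $E_n\subset f^{-n}x_n$ with $\frac1n\log|E_n|\to h_m(f,\varepsilon)$.
\item Form $\sigma_n=\frac1{|E_n|}\sum_{y\in E_n}\delta_y$ and $\mu_n=\frac1n\sum_{i<n}f^i_*\sigma_n$, and let $\mu$ be a weak$^*$ limit of a subsequence.
\item Choose $\alpha$ resolving with $\mu(\partial\alpha)=0$. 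Each atom of $\alpha_0^{n-1}$ contains at most $C$ points of $E_n$, so $H_{\sigma_n}(\alpha_0^{n-1}\mid f^{-n}\mathcal B)\ge\log|E_n|-\log C$. All of $\sigma_n$ lives on one fiber, so conditioning on $f^{-n}\mathcal B$ costs nothing.
\item Split $n=q+kq+r$ and use subadditivity of conditional entropy with $f^{-n}\mathcal B\subset f^{-(j+q)}\mathcal B$. This gives
\[
\frac1q H_{\mu}(\alpha_0^{q-1}\mid f^{-q}\mathcal B)\ge h_m(f,\varepsilon)-o(1).
\]
Here we must pass to the limit in $\frac1qH_{\mu_n}(\alpha_0^{q-1}\mid f^{-q}\mathcal B)$.
\end{enumerate}

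The main obstacle is step 4: conditional entropy given $f^{-q}\mathcal B$ is not weak$^*$ upper semicontinuous in general, which is exactly why the Shi--Yan--Zeng counterexample exists. We will address it by approximating $f^{-q}\mathcal B$ from above by $f^{-q}\gamma$ for a fine finite partition $\gamma$ with $\mu$-null boundary, since $H(\cdot\mid f^{-q}\gamma)\ge H(\cdot\mid f^{-q}\mathcal B)$ is the wrong direction. The resolving property is used to show that refining $\gamma$ changes the count by at most a bounded factor, uniformly in $n$, so that $H_{\mu_n}(\alpha_0^{q-1}\mid f^{-q}\gamma)$ and $H_{\mu_n}(\alpha_0^{q-1}\mid f^{-q}\mathcal B)$ differ by $O(1)$. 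This allows taking limits in $n$ first, then refining $\gamma$ and applying martingale convergence under $\mu$.

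For ergodic measures, conditional entropy given the decreasing $\sigma$-algebras is affine in $\mu$ (via Jacobs-type arguments on $f^{-q}\mathcal B$). Hence some ergodic component attains at least the value, giving $\sup_{\mu\in\mathcal M^e_f}h_{m,\mu}$.

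To obtain $h_p=h_m$, the same resolving property lets us reproduce the Fiebig--Fiebig--Nitecki argument. Bounded fiber multiplicity per atom makes $\log s(n,\varepsilon,f^{-n}x)$ comparable, up to $O(1)$, to the number of nonempty atoms of $\alpha_0^{n-1}$ in $f^{-n}x$. This quantity is supermultiplicative along backward chains, $x\leftarrow f^{-n}x\leftarrow f^{-n-m}x$, and a nested diagonal choice of points yields a single $x$ achieving the growth rate $h_m$.
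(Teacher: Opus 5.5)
\paragraph{The gap is in your step 4.} That step is exactly where the resolving property has to be used, and your version of it fails. In the paper, $f$ has the resolving-partition property if for every $\delta>0$ and every $\mu\in\mathcal M_f(X)$ there is a finite Borel partition $\alpha$ with $\operatorname{diam}\alpha<\delta$, $\mu(\partial\alpha)=0$, and $f$ injective on each atom. Iterating gives $\alpha_0^{m-1}\vee f^{-m}\mathcal B=\mathcal B$ for every $m$ (Lemma~\ref{lem:iterated-resolving}).

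Subadditivity bounds $H_{\sigma_n}(\alpha_0^{n-1}\mid f^{-n}\mathcal B)$ by $\sum_k H_{\sigma_n}(f^{-(j+kq)}\alpha_0^{q-1}\mid f^{-n}\mathcal B)+2q\log\#\alpha$. To turn the $k$-th term into $H_{f^{j+kq}_*\sigma_n}(\alpha_0^{q-1}\mid f^{-q}\mathcal B)$ you must replace $f^{-n}\mathcal B$ by $f^{-(j+kq+q)}\mathcal B$. But the inclusion $f^{-n}\mathcal B\subset f^{-(j+kq+q)}\mathcal B$ says the new conditioning $\sigma$-algebra is \emph{finer}, and refining the conditioning can only lower conditional entropy. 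So the inclusion you invoke gives the inequality in the wrong direction. Indeed, since $\sigma_n$ lives on one fibre, plain subadditivity only reproduces Misiurewicz's bound $h_m(f,\varepsilon)\le h_\mu(f)$.

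What works is the chain rule applied block by block. Put $p=j+kq+q$ and $m=n-p$. The $k$-th block is conditioned on the later blocks together with $f^{-n}\mathcal B$, that is, on $f^{-p}\alpha_0^{m-1}\vee f^{-p}f^{-m}\mathcal B=f^{-p}\bigl(\alpha_0^{m-1}\vee f^{-m}\mathcal B\bigr)$. By Lemma~\ref{lem:iterated-resolving} this is exactly $f^{-p}\mathcal B$. That identity yields $\log\#E_n\le\sum_k H_{f^{j+kq}_*\sigma_n}(\alpha_0^{q-1}\mid f^{-q}\mathcal B)+2q\log\#\alpha$. This is what the paper means by following Wu--Zhu's Theorem~B ``using Lemma~\ref{lem:iterated-resolving}''. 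Your guessed definition, a bounded count of separated points per atom on a single fibre, does not deliver this identity, which is a statement about $\sigma$-algebras rather than about cardinalities on one fibre.

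\paragraph{You have also misdiagnosed the obstacle.} Semicontinuity is not where the general case breaks. Because $f^{-q}\gamma\subset f^{-q}\mathcal B$, one has $H_{\mu_n}(\alpha_0^{q-1}\mid f^{-q}\mathcal B)\le H_{\mu_n}(\alpha_0^{q-1}\mid f^{-q}\gamma)$, and this is the \emph{right} direction. Let $n\to\infty$, using null boundaries and invariance of $\mu$. Then refine $\gamma$ and apply the martingale theorem. This gives $\limsup_n H_{\mu_n}(\alpha_0^{q-1}\mid f^{-q}\mathcal B)\le H_\mu(\alpha_0^{q-1}\mid f^{-q}\mathcal B)$ for every continuous map. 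It needs no resolving hypothesis and no ``$O(1)$'' comparison, which you do not justify anyway. Concavity in the measure is likewise general. So the Shi--Yan--Zeng phenomenon lives in the block decomposition, and your plan spends the hypothesis in the wrong place.

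\paragraph{The separate argument for $h_p=h_m$.} Your Fiebig--Fiebig--Nitecki-type argument is unsupported as sketched. Nearby points with distinct $\alpha$-names need not be $\varepsilon$-separated, so there is no $O(1)$ comparison with $s(n,\varepsilon,f^{-n}x)$. It is also unnecessary: bound $h_{m,\mu}$ by $h_p$ instead of $h_m$. You can do this with Lemma~\ref{thm:leq hp}, or with your own fallback: $h_{m,\mu}\le h^*_{p,\mu}\le h_p$ for ergodic $\mu$ via Theorem~\ref{thm:relations for ergodic case}, followed by ergodic decomposition. Then the chain $\sup_\mu h_{m,\mu}(f)\le h_p(f)\le h_m(f)\le\sup_\mu h_{m,\mu}(f)$ closes by itself, which is how the paper argues.
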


Furthermore, the pressure version of Theorem~\ref{thm:variational principle} is also discussed.

The paper is organized as follows. Section \ref{sec:pre} recalls the necessary definitions.
Section \ref{sec:variational principle} proves the empirical variational principles and constructs the
counterexample showing that the supremum cannot be restricted to ergodic
measures. Section \ref{sec:relations} compares the empirical metric preimage entropies with the
pointwise metric preimage entropy $h_{m,\mu}(f)$. Section \ref{sec:resolving-partition property} introduces the
resolving-partition property and proves the variational principle
under this condition. Section \ref{sec:variational principle for preimage pressure} proves the corresponding variational
principles for preimage pressure. The final section gives an example showing that uniform separation
of preimages does not imply forward expansiveness.

\section{Preliminaries}\label{sec:pre}

Throughout this section, $(X,d)$ is a compact metric space and $f$ is a continuous self-map on $X$.

\subsection{Topological preimage entropy}

For $n\in\mathbb{N}$ and $\varepsilon>0$, define the Bowen metric and Bowen ball by
\[
d_n(x,y)=\max_{0\le i\le n-1}d(f^ix,f^iy),\qquad
B_n(x,\varepsilon)=\{y\in X:d_n(x,y)<\varepsilon\}.
\]
A set $E\subset X$ is \emph{$(n,\varepsilon)$-spanning} if $X=\bigcup_{x\in E}B_n(x,\varepsilon)$,
and \emph{$(n,\varepsilon)$-separated} if $d_n(x,y)\ge\varepsilon$ for all distinct $x,y\in E$.
Denote by $s_f(n,\varepsilon,E)$ the maximum cardinality of an $(n,\varepsilon)$-separated subset of $E$. 
When the map $f$ is clear from the context, we simply write $s(n,\varepsilon,E)$.

We consider two pointwise topological preimage entropies:
\[
h_m(f)=\lim_{\varepsilon\to0}\limsup_{n\to\infty}\frac1n\log\sup_{x\in X}s(n,\varepsilon,f^{-n}x),
\]
\[
h_p(f)=\sup_{x\in X}\lim_{\varepsilon\to0}\limsup_{n\to\infty}\frac1n\log s(n,\varepsilon,f^{-n}x).
\]

\subsection{Measure-theoretic preimage entropy}

Let $\mathcal{B}$ be the Borel $\sigma$-algebra on $X$. Denote by $\mathcal{M}(X)$ the set of all Borel probability measures on $X$, and by $\mathcal{M}_f(X)$ and $\mathcal{M}_f^e(X)$ the subsets of $f$-invariant and ergodic $f$-invariant measures, respectively. For $\mu\in\mathcal{M}(X)$, we work with the $\mu$-completion of $\mathcal{B}$, still denoted by $\mathcal{B}$. 

A \emph{partition} of $X$ is a collection of disjoint subsets whose union is $X$. For a partition $\xi$, let $\xi(x)$ be the element containing $x$. Write $\xi\succeq\eta$ (or $\eta\preceq\xi$) if $\xi(x)\subset\eta(x)$ for all $x\in X$. The refinement of $\xi$ and $\eta$ is $\xi\vee\eta=\{A\cap B:A\in\xi,\,B\in\eta\}$.

Let $(X,\mu,\mathcal{A})$ be a standard probability space. A partition $\xi$ is \emph{measurable} if there exists a countable family $\{A_n\}\subset\mathcal{B}(\xi)$ separating almost every pair of atoms, where $\mathcal{B}(\xi)$ is the sub-$\sigma$-algebra of $\mathcal{A}$ consisting of unions of elements of $\xi$. Let $\varepsilon$ be the partition of $X$ into singletons; it is measurable and generates $\mathcal{B}$. In what follows, we identify a measurable partition with the $\sigma$-algebra it generates when writing conditional entropy. The \emph{canonical system of conditional measures} for $\mu$ and $\xi$ is a family $\{\mu_x^\xi:x\in X\}$ satisfying $\mu_x^\xi(\xi(x))=1$ for $\mu$-a.e.\ $x$, such that for every measurable $B\subset X$, the map $x\mapsto\mu_x^\xi(B)$ is $\mathcal{B}(\xi)$-measurable and
\[
\mu(B)=\int_X\mu_x^\xi(B)\,d\mu(x).
\]
Such a system exists and is essentially unique (see \cite{EinsiedlerWard2011,Rokhlin1962}).

For measurable partitions $\alpha$ and $\xi$ of $(X,\mu,\mathcal{A})$, define the \emph{information function}
\[
I_\mu(\alpha)(x)=-\log\mu(\alpha(x)),
\]
and the \emph{conditional information function}
\[
I_\mu(\alpha\mid\xi)(x)=-\log\mu_x^\xi(\alpha(x)).
\]
The \emph{entropy} and \emph{conditional entropy} are
\[
H_\mu(\alpha)=\int_X I_\mu(\alpha)(x)\,d\mu(x),\qquad
H_\mu(\alpha\mid\xi)=\int_X I_\mu(\alpha\mid\xi)(x)\,d\mu(x).
\]

For $\mu\in\mathcal{M}_f(X)$ and a finite measurable partition $\alpha$, let
\[
h_{m,\mu}(f,\alpha)=\limsup_{n\to\infty}\frac1n H_\mu\bigl(\alpha_0^{n-1}\mid f^{-n}\mathcal{B}\bigr).
\]
Then the \emph{pointwise metric preimage entropy} is defined by
\[
h_{m,\mu}(f)=\sup_\alpha h_{m,\mu}(f,\alpha),
\]
where $\alpha$ runs over all finite Borel partitions.
Another related quantity is the \emph{folding entropy}
\[
F_\mu(f)=H_\mu(\varepsilon\mid f^{-1}\varepsilon),
\]
introduced by Ruelle \cite{Ruelle1996}.

We now turn to another type of measure-theoretic preimage entropy, introduced and studied by Cheng, Li and Wu \cite{ChengLiWu2023}, following the approach of Pfister and Sullivan \cite{PfisterSullivan2007}. Unlike the conditional-entropy-based definition above, this entropy is defined using the empirical measures of orbits of points taken from preimage sets. Accordingly, we refer to it as the \emph{empirical preimage entropy}.
For $y\in X$, the empirical measure of the first $n$ iterates is
\[
\mathcal{E}_n(y)=\mathcal{E}_n^f(y)=\frac1n\sum_{j=0}^{n-1}\delta_{f^jy}\in\mathcal{M}(X).
\]
For an open set $G\subset\mathcal{M}(X)$, define
\[
P_n(x,G)=P_n^f(x,G)=f^{-n}x\cap\{y\in X:\mathcal{E}_n(y)\in G\}.
\]

\begin{definition}
Let $\mu\in\mathcal{M}_f(X)$. For $\varepsilon>0$ and a neighbourhood $G$ of $\mu$, define
\[
h_{m,\mu}^*(f,\varepsilon;G)=\limsup_{n\to\infty}\frac1n\log\sup_{x\in X}s\bigl(n,\varepsilon,P_n(x,G)\bigr),
\]
and
\[
h_{p,\mu}^*(f,x,\varepsilon;G)=\limsup_{n\to\infty}\frac1n\log s\bigl(n,\varepsilon,P_n(x,G)\bigr).
\]
Then set
\[
h_{m,\mu}^*(f,\varepsilon)=\inf_{G\ni\mu}h_{m,\mu}^*(f,\varepsilon;G),\qquad
h_{p,\mu}^*(f,x,\varepsilon)=\inf_{G\ni\mu}h_{p,\mu}^*(f,x,\varepsilon;G),
\]
where $G$ ranges over all neighbourhoods of $\mu$. We call
\[
h_{m,\mu}^*(f)=\lim_{\varepsilon\to0}h_{m,\mu}^*(f,\varepsilon),\qquad
h_{p,\mu}^*(f)=\sup_{x\in X}\lim_{\varepsilon\to0}h_{p,\mu}^*(f,x,\varepsilon)
\]
the \emph{$m$-type} and \emph{$p$-type empirical metric preimage entropy}, respectively.
\end{definition}

\subsection{Two conditions}\label{subsec:two conditions}
Let $f$ be a continuous self-map on a compact metric space $(X,d)$. We recall the definitions of uniform separation of preimages and forward expansiveness.
\begin{definition}
$f$ is said to have \emph{uniform separation of preimages} if there exists $\varepsilon_0>0$ such that
\[
d(x,y)\le\varepsilon_0,\; f(x)=f(y)\;\Longrightarrow\; x=y.
\]
Such $\varepsilon_0$ is called an \emph{exponent of separation} for $f$.
\end{definition}

\begin{definition}
$f$ is \emph{forward expansive} if there exists $\delta>0$ such that for every pair $x\neq y$ in $X$, there exists $n\ge0$ with $d(f^nx,f^ny)>\delta$. Equivalently,
\[
d(f^nx,f^ny)\le\delta\;\forall n\ge0\;\Longrightarrow\; x=y.
\]
Such $\delta$ is called a \emph{forward expansive constant} for $f$.
\end{definition}

Clearly, forward expansiveness implies uniform separation of preimages.

\section{The variational principle}\label{sec:variational principle}

\subsection{Proof of Theorem \ref{thm:variational principle}}

We begin by proving Theorem \ref{thm:variational principle}. To this end, we first establish two useful lemmas.

\begin{lemma}\label{lem:variational inequality1}
For every $\mu\in\M_f(X)$,
\[
 h_{m,\mu}^{*}(f)\le h_m(f).
\]
\end{lemma}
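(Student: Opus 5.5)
This inequality follows directly from monotonicity, since $P_n(x,G)\subset f^{-n}x$ for every $x$, $n$ and every open $G$. The plan is to pass this inclusion through each of the limiting operations in the definition of $h^*_{m,\mu}(f)$ in turn.

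Fix $\varepsilon>0$ and an open neighbourhood $G$ of $\mu$. Since $s(n,\varepsilon,\cdot)$ is monotone with respect to inclusion, we get $s(n,\varepsilon,P_n(x,G))\le s(n,\varepsilon,f^{-n}x)$. Taking the supremum over $x\in X$, then $\frac1n\log$, then $\limsup_{n\to\infty}$, gives
\[
h^*_{m,\mu}(f,\varepsilon;G)\le \limsup_{n\to\infty}\frac1n\log\sup_{x\in X}s(n,\varepsilon,f^{-n}x)=:h_m(f,\varepsilon).
\]
The right-hand side does not depend on $G$. Taking the infimum over neighbourhoods $G\ni\mu$ therefore yields $h^*_{m,\mu}(f,\varepsilon)\le h_m(f,\varepsilon)$.

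Finally, both quantities are nonincreasing in $\varepsilon$, because a separated set at a larger scale is also separated at any smaller scale. Hence both limits as $\varepsilon\to0$ exist in $[0,\infty]$ and are monotone limits, and letting $\varepsilon\to0$ gives $h^*_{m,\mu}(f)\le h_m(f)$.

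There is no real obstacle here. The only point to check is that the limits in $\varepsilon$ exist, which the monotonicity argument above settles. The same argument applied pointwise in $x$ gives the analogous bound $h^*_{p,\mu}(f)\le h_p(f)$.
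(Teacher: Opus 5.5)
Your proposal is correct and is the paper's argument: the inclusion $P_n(x,G)\subset f^{-n}x$ gives $s(n,\varepsilon,P_n(x,G))\le s(n,\varepsilon,f^{-n}x)$, and this passes through the supremum over $x$, the $\limsup$, the infimum over $G$ and the limit in $\varepsilon$. The paper compresses all of this into ``follows directly''. One minor point: if $P_n(x,G)=\emptyset$, your intermediate quantities may equal $-\infty$ rather than lie in $[0,\infty]$, but monotone limits still exist in the extended reals, so the conclusion is unaffected.
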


\begin{proof}
For any neighbourhood $G\ni\mu$ and any $x\in X$, we have
\[
 P_n(x,G)\subset f^{-n}x,
\]
which implies
\[
 s(n,\varepsilon,P_n(x,G))\le s(n,\varepsilon,f^{-n}x).
\]
The desired inequality follows directly.
\end{proof}

For fixed $\varepsilon>0$, write
\[
 h_m(f,\varepsilon)=\limsup_{n\to\infty}\frac1n\log\sup_{x\in X}s(n,\varepsilon,f^{-n}x).
\]

\begin{lemma}\label{lem:variational inequality2}
For every $\varepsilon>0$ and every $a<h_m(f,\varepsilon)$, there exists $\mu\in\M_f(X)$ such that
\[
h_{m,\mu}^{*}(f,\varepsilon)\ge a.
\]
\end{lemma}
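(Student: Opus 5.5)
I would argue by contradiction, using a compactness argument on $\mathcal{M}(X)$ in the spirit of Pfister and Sullivan. Fix $\varepsilon>0$ and $a<h_m(f,\varepsilon)$, and suppose that $h_{m,\mu}^*(f,\varepsilon)<a$ for every $\mu\in\M_f(X)$. By the definition of $h_{m,\mu}^*(f,\varepsilon)$ as an infimum over neighbourhoods, each $\mu\in\M_f(X)$ then has an open neighbourhood $G_\mu\subset\mathcal{M}(X)$ and a number $b_\mu<a$ with
\[
h_{m,\mu}^*(f,\varepsilon;G_\mu)<b_\mu .
\]
The supremum over $x$ sits inside the $\limsup$. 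Hence there is $n_\mu$ such that $s(n,\varepsilon,P_n(x,G_\mu))\le e^{nb_\mu}$ for all $n\ge n_\mu$ and \emph{all} $x\in X$ simultaneously. This uniformity in $x$ is what the $m$-type quantity needs.

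Since $\M_f(X)$ is compact in the weak$^*$ topology, finitely many of these sets, $G_{\mu_1},\dots,G_{\mu_N}$, cover it. Put $U=\bigcup_i G_{\mu_i}$, which is an open neighbourhood of $\M_f(X)$.

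The key auxiliary fact is that empirical measures enter $U$ uniformly. That is, there exists $n_U$ such that $\mathcal{E}_n(y)\in U$ for all $y\in X$ and all $n\ge n_U$. I would prove it as follows. Otherwise there are $n_k\to\infty$ and $y_k$ with $\mathcal{E}_{n_k}(y_k)\notin U$. For each continuous $\varphi$ we have
\[
\Bigl|\int\varphi\circ f\,d\mathcal{E}_{n_k}(y_k)-\int\varphi\,d\mathcal{E}_{n_k}(y_k)\Bigr|\le \frac{2\|\varphi\|_\infty}{n_k}.
\]
Therefore any weak$^*$ limit point of $\mathcal{E}_{n_k}(y_k)$ is $f$-invariant. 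Such a limit point exists by compactness of $\mathcal{M}(X)$, and it lies in the closed set $\mathcal{M}(X)\setminus U$. This contradicts $\M_f(X)\subset U$. This step is routine but is the one point needing care; the rest is bookkeeping.

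Given this, for $n\ge\max\{n_U,n_{\mu_1},\dots,n_{\mu_N}\}$ and every $x\in X$ we get
\[
f^{-n}x=\bigcup_{i=1}^N P_n(x,G_{\mu_i}).
\]
Since an $(n,\varepsilon)$-separated subset of a finite union splits into separated subsets of the pieces, $s(n,\varepsilon,\cdot)$ is subadditive over finite unions, so
\[
\sup_{x\in X}s(n,\varepsilon,f^{-n}x)\le\sum_{i=1}^N e^{nb_{\mu_i}}\le N e^{nb},\qquad b=\max_i b_{\mu_i}<a.
\]
Taking $\frac1n\log$ and $\limsup_{n\to\infty}$ gives $h_m(f,\varepsilon)\le b<a$, contradicting the choice of $a$. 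Hence some $\mu\in\M_f(X)$ satisfies $h_{m,\mu}^*(f,\varepsilon)\ge a$.

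I expect no serious obstacle. The only delicate points are choosing the neighbourhoods so that the bounds are uniform in $x$, which the definition of the $m$-type quantity provides, and the uniform convergence of empirical measures to $\M_f(X)$ proved above.
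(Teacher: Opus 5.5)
Your proof is correct, but it takes a different route from the paper's proof of this lemma.

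The paper argues constructively. It fixes $(n_i,\varepsilon)$-separated sets $E_i\subset f^{-n_i}x_i$ with $\#E_i\ge e^{an_i}$. At each scale $1/r$ it covers $\mathcal{M}(X)$ by finitely many small balls and uses the pigeonhole principle along nested subsequences $I_1\supset I_2\supset\cdots$. This yields balls $G_r$ containing the empirical measures of a fraction $1/N_r$ of each $E_i$, and $\mu$ is then obtained as a weak-star limit of empirical measures $\mathcal{E}_{n_i}(y)$.

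You instead argue by contradiction. Compactness of $\mathcal{M}_f(X)$ gives finitely many neighbourhoods carrying exponential bounds that are uniform in $x$. The uniform approach of empirical measures to $\mathcal{M}_f(X)$ forces $f^{-n}x=\bigcup_i P_n(x,G_{\mu_i})$ for large $n$, and subadditivity of $s(n,\varepsilon,\cdot)$ over finite unions finishes the argument. This is exactly the mechanism the paper uses later, in Lemma~\ref{lem:invariance lemma} and Step~2 of the proof of Theorem~\ref{thm:variational principle for preimage pressure} with $\phi=0$.

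What each route buys:
\begin{itemize}
\item The constructive proof locates $\mu$ explicitly, as an accumulation point of the orbit distributions of the large preimage-separated sets. It adapts directly to the $p$-type statement by taking $x_i\equiv x$. Its cost is some diagonal bookkeeping: one must ensure a single limit $\mu$ works for every scale $r$. One way is to choose $E_i^{(r+1)}\subset E_i^{(r)}$, so that the balls $G_r$ pairwise intersect and points chosen in them form a Cauchy family.
\item Your argument avoids that bookkeeping entirely and is shorter. It works equally well at a fixed endpoint $x$, hence for $h_p$, and extends at once to preimage pressure. The price is that it is non-constructive.
\end{itemize}
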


\begin{proof}
Choose a sequence $n_i\to\infty$, points $x_i\in X$, and $(n_i,\varepsilon)$-separated sets
\[
 E_i\subset f^{-n_i}x_i
\]
with $\#E_i\ge e^{a n_i}$ for all sufficiently large $i$.

Let $D$ be a metric inducing the weak-star topology on $\mathcal{M}(X)$. Since $\mathcal{M}(X)$ is compact, for each $r\ge1$ we can cover $\mathcal{M}(X)$ by finitely many $D$-balls
\[
U_{r,1},\dots,U_{r,N_r}
\]
each of diameter less than $1/r$. For fixed $r$ and $i$, the empirical measures $\{\mathcal{E}_{n_i}(y):y\in E_i\}$ are contained in the union of these balls. By the pigeonhole principle, some ball contains at least $\#E_i/N_r$ of them.

Now proceed inductively. For $r=1$, by the pigeonhole principle we may select a ball $G_1$ from the finite collection $\{U_{1,1},\dots,U_{1,N_1}\}$ and an infinite subsequence $I_1\subset\mathbb{N}$ such that for every $i\in I_1$, at least $\#E_i/N_1$ points in $E_i$ have their empirical measure lying in $G_1$. Next, for $r=2$, choose a ball $G_2$ from $\{U_{2,1},\dots,U_{2,N_2}\}$ and an infinite subsequence $I_2\subset I_1$ with the same property. Continuing this process, we obtain balls $G_r\in\{U_{r,1},\dots,U_{r,N_r}\}$ and nested infinite subsequences
\[
I_1\supset I_2\supset I_3\supset\cdots
\]
such that for each $i\in I_r$, there exists a subset $E_i^{(r)}\subset E_i$ satisfying
\[
\#E_i^{(r)}\ge\frac{\#E_i}{N_r},\qquad
\mathcal{E}_{n_i}(y)\in G_r\ \text{for all}\ y\in E_i^{(r)},
\]
and $\operatorname{diam}G_r<1/r$.

Fix $r\in\mathbb{N}$. For any $i\in I_r$, pick $y_{i,r}\in E_i^{(r)}$ and set $\mu_i^r:=\mathcal{E}_{n_i}(y_{i,r})\in G_r$. Passing to a subsequence if necessary, we may assume that $\mu_i^r\to\mu\in\mathcal{M}(X)$ as $i\to\infty$ within $I_r$. It is well known that $\mu\in\mathcal{M}_f(X)$.

Let $G$ be any neighbourhood of $\mu$. Since $\mu_i^r\to\mu$ and $\operatorname{diam}G_r<1/r$, for all sufficiently large $r$ we have $G_r\subset G$. For any $i\in I_r$,
\[
 s\bigl(n_i,\varepsilon,P_{n_i}(x_i,G)\bigr)\ge \#E_i^{(r)}\ge \frac{\#E_i}{N_r}.
\]
Therefore
\[
\limsup_{n\to\infty}\frac1n\log\sup_{x\in X}s\bigl(n,\varepsilon,P_n(x,G)\bigr)\ge a.
\]
Taking the infimum over all $G\ni\mu$ gives $h_{m,\mu}^{*}(f,\varepsilon)\ge a$.
\end{proof}

Combining Lemmas \ref{lem:variational inequality1} and \ref{lem:variational inequality2} yields the first variational principle in Theorem \ref{thm:variational principle}. The proof of the second one is similar.

\begin{remark}
In the the section before the last one, we shall generalize the variational principles established in Theorem~\ref{thm:variational principle} to the setting of preimage pressure by means of a different approach. In this section, we prove Theorem~\ref{thm:variational principle} by explicitly constructing an invariant measure.
\end{remark}

\subsection{Proof of Theorem \ref{thm:ergodic measure}}

The purpose of this subsection is to give a compact metric system for which the $p$-type topological preimage entropy is positive, while all ergodic invariant measures have zero $m$-type empirical metric preimage entropy.
More precisely, we construct a compact metric space $X$ and a continuous map $F$ on $X$ such that
\[
 h_p(F)>0
 \quad\text{but}\quad
 h^*_{m,\nu}(F)=0\quad\text{for every }\nu\in \Me_F(X).
\]
Consequently,
\[
 h_p(F)>\sup_{\nu\in\Me_F(X)}h^*_{m,\nu}(F).
\]
Since always $h^*_{p,\nu}(F)\le h^*_{m,\nu}(F)$, this also implies
\[
h_m(F)\geq h_p(F)>\sup_{\nu\in\Me_F(X)}h^*_{m,\nu}(F)\geq\sup_{\nu\in\Me_F(X)}h^*_{p,\nu}(F).
\]
This proves Theorem \ref{thm:ergodic measure}.

\subsubsection{The space}
Let
\[
 \Sigma=\{0,1\}^{\mathbb Z}
\]
with the two-sided full shift $\sigma:\Sigma\to\Sigma$, $(\sigma\omega)_i=\omega_{i+1}$.
Let $H=\ell^2(\mathbb Z)$ with standard orthonormal basis $(e_i)_{i\in\mathbb Z}$.
Define $\Phi:\Sigma\to H$ by
\[
 \Phi(\omega)=\sum_{i\in\mathbb Z}2^{-|i|-3}\omega_i e_i.
\]
Then $\Phi$ is a continuous, one-to-one map.
Thus $Y=\Phi(\Sigma)$ is compact. Also, if $\omega_0\ne\eta_0$, then
\[
 \|\Phi(\omega)-\Phi(\eta)\|_H\ge 2^{-3}=\frac18.
\]
The shift induces a homeomorphism $S:Y\to Y$ by
\[
 S\Phi(\omega)=\Phi(\sigma\omega).
\]

Work in the Hilbert space
\[
 \mathbb H=\mathbb R^2\oplus H.
\]
We write points as $(a,t,u)$ and use the metric
\[
 D((a,t,u),(a',t',u'))=|a-a'|+|t-t'|+\|u-u'\|_H.
\]
Define
\[
 w(t)=1-t,\qquad 0\le t\le1,
\]
and
\[
 \Omega(t,\omega)=w(t)\Phi(\omega)=(1-t)\Phi(\omega).
\]
Thus $\Omega(1,\omega)=0$ for every $\omega$. At the level $t=1$, the symbolic fibre is
completely collapsed.

Define the limit part
\[
 L=\{(0,t,\Omega(t,\omega)):0\le t<1,\ \omega\in\Sigma\}\cup\{q\},
 \qquad q=(0,1,0).
\]
We call $L_t=\{(0,t,\Omega(t,\omega)):\ \omega\in\Sigma\}$ with $0\le t<1$ a visible limit layer. 
For each $m\ge1$, define the finite tower
\[
 T_m=\left\{\left(\frac1m,\frac jm,
        \Omega\left(\frac jm,\omega\right)\right):
        0\le j\le m,\ \omega\in\Sigma\right\}.
\]
The top point is
\[
 p_m=\left(\frac1m,1,0\right).
\]
Now set
\[
 X=L\cup\bigcup_{m\ge1}T_m\subset \mathbb H.
\]
We use the metric $D$ restricted to $X$.

\begin{lemma}
The space $X$ is compact.
\end{lemma}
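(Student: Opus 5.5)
Since $X$ is a subset of the complete metric space $(\mathbb H,D)$, it suffices to show that $X$ is sequentially compact. The closedness is the real content; total boundedness comes from the fact that all pieces are continuous images of compact sets.

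\textbf{Compact building blocks.} The map $(t,\omega)\mapsto(0,t,\Omega(t,\omega))$ from $[0,1]\times\Sigma$ into $\mathbb H$ is continuous, because $\Phi$ is continuous and $\|\Phi(\omega)\|_H$ is bounded. Its image is therefore compact. This image is exactly $L$: the level $t=1$ contributes only the point $q$, since $\Omega(1,\omega)=0$. Similarly, each tower $T_m$ is a finite union of the compact sets $\{(1/m,j/m)\}\times (1-j/m)Y$, so $T_m$ is compact. Any finite union $L\cup T_1\cup\dots\cup T_M$ is then compact.

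\textbf{Sequential compactness.} Take a sequence $(z_k)\subset X$. If infinitely many $z_k$ lie in $L$, or in some single $T_m$, we extract a convergent subsequence by the previous step. Otherwise we may assume $z_k\in T_{m_k}$ with $m_k\to\infty$. Write
\[
z_k=\bigl(1/m_k,\ j_k/m_k,\ (1-j_k/m_k)\Phi(\omega^k)\bigr).
\]
Pass to a subsequence with $j_k/m_k\to t\in[0,1]$ and $\omega^k\to\omega$ in $\Sigma$. By continuity of $\Phi$,
\[
z_k\to(0,t,(1-t)\Phi(\omega))=(0,t,\Omega(t,\omega)).
\]
This limit lies in $L$: it is a point of the visible layer $L_t$ if $t<1$, and it equals $q$ if $t=1$. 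Hence every sequence in $X$ has a subsequence converging in $X$, and $X$ is compact.

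\textbf{Main point to check.} The only potential obstacle is that the limit points of the towers land in $X$. This works because the first coordinate $1/m_k$ tends to $0$, while the collapsing factor $w(t)=1-t$ sends all $t=1$ limits to the single point $q$. No estimate beyond the continuity of $\Phi$ and of the product map $(t,\omega)\mapsto(1-t)\Phi(\omega)$ is needed.
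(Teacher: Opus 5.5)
Your proof is correct and is essentially the paper's argument: the same split into infinitely many terms in $L$, in a single $T_m$, or in towers $T_{m_k}$ with $m_k\to\infty$, where the limit lies in a visible layer $L_t$ or collapses to $q$ when $j_k/m_k\to1$. Your direct extraction $\omega^k\to\omega$ is slightly cleaner than the paper's phrasing, and the aside about total boundedness is unnecessary, since sequential compactness already suffices.
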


\begin{proof}
Let $\{x_r\}_{r\geq1}\subset X$ be any sequence.
If infinitely many $x_r$ lie in $L$, then compactness of $[0,1]$ and compactness of
$Y=\Phi(\Sigma)$ give a convergent subsequence in $L$.
If infinitely many $x_r$ lie in a single fixed tower $T_m$, then a subsequence converges in $T_m$, because $T_m$ is a finite union of compact sets.
It remains to consider $x_r\in T_{m_r}$ with $m_r\to\infty$. Write
\[
 x_r=\left(\frac1{m_r},\frac{j_r}{m_r},
       \left(1-\frac{j_r}{m_r}\right)\Phi(\omega_r)\right).
\]
After passing to a subsequence, assume $j_r/m_r\to t\in[0,1]$. If $t<1$, then compactness of
$Y$ gives a further subsequence such that $\Phi(\sigma^{j_r}\omega_r)\to\Phi(\eta)$ for
some $\eta\in\Sigma$. Then
\[
 x_r\to (0,t,(1-t)\Phi(\eta))\in L.
\]
If $t=1$, then
\[
 \left\|\left(1-\frac{j_r}{m_r}\right)\Phi(\omega_r)\right\|_H
 \le \left(1-\frac{j_r}{m_r}\right)\sup_{\omega\in\Sigma}\|\Phi(\omega)\|_H\to0,
\]
so $x_r\to q$. Thus every sequence has a convergent subsequence with limit in $X$.
\end{proof}

\subsubsection{The map}

Define $F:X\to X$ as follows. On the limit part, put
\[
 F(0,t,\Omega(t,\omega))=(0,t,\Omega(t,\sigma\omega)),\qquad 0\le t<1,
\]
and
\[
 F(q)=q.
\]
On $T_m$, for $0\le j<m$, put
\[
 F\left(\frac1m,\frac jm,
        \Omega\left(\frac jm,\omega\right)\right)
 =
 \left(\frac1m,\frac{j+1}{m},
        \Omega\left(\frac{j+1}{m},\sigma\omega\right)\right).
\]
Finally, the top of every finite tower is sent to the common fixed point:
\[
 F(p_m)=q.
\]
This last rule is the key point. All finite towers eventually fall into the same point $q$.

\begin{lemma}
The map $F:X\to X$ is continuous.
\end{lemma}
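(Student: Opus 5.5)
We will verify continuity of $F$ by sequences: given $x_r\to x$ in $X$, show $F(x_r)\to F(x)$. We first dispose of the easy local pieces. Each tower $T_m$ (for fixed $m$) is relatively open in $X$: every point of $X$ outside $T_m$ has first coordinate $0$ or $1/m'$ with $m'\neq m$, so $T_m$ is at $D$-distance at least $\min\{1/m-1/(m+1),\,1/(m-1)-1/m\}>0$ from the rest of $X$. Likewise each level $\{j/m\}\times\Omega(j/m,\Sigma)$ inside $T_m$ is separated from the other levels by $1/m$ in the $t$-coordinate. On a level with $j<m$ the map is $(1/m,j/m,(1-j/m)\Phi(\omega))\mapsto(1/m,(j+1)/m,(1-(j+1)/m)S\Phi(\omega))$, which is continuous because $S$ is a homeomorphism of $Y$. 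The top level $j=m$ is the single point $p_m$. Hence $F$ is continuous at every point of $\bigcup_m T_m$.

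It remains to treat points $x\in L$. We may assume the $x_r$ lie either in $L$ or in towers $T_{m_r}$ with $m_r\to\infty$, since finitely many towers stay a positive distance from $L$. We use the bound
\[
\|(1-s)\Phi(\omega)-(1-t)\Phi(\eta)\|_H\le |s-t|\,C+(1-t)\|\Phi(\omega)-\Phi(\eta)\|_H,\qquad C=\sup\|\Phi\|.
\]

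\textbf{Case $x=(0,t,(1-t)\Phi(\eta))$ with $t<1$.} Write $x_r=(a_r,t_r,(1-t_r)\Phi(\omega_r))$ with $a_r\to0$ and $t_r\to t$. Then $(1-t_r)\Phi(\omega_r)\to(1-t)\Phi(\eta)$ and $1-t_r\to1-t>0$, so $\Phi(\omega_r)\to\Phi(\eta)$, that is, $\omega_r\to\eta$. Eventually $t_r<1$, so no $x_r$ is a tower top and $F(x_r)=(a_r,t_r+a_r,(1-t_r-a_r)\Phi(\sigma\omega_r))$. Here the increment is $a_r=1/m_r$ for tower points and $0$ for points of $L$. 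Since $\sigma$ is continuous, this tends to $(0,t,(1-t)\Phi(\sigma\eta))=F(x)$.

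\textbf{Case $x=q$.} Now $t_r\to1$, and $F(x_r)$ is either $q$ (when $x_r=q$ or $x_r=p_{m_r}$) or a point $(a_r,t_r',(1-t_r')\Phi(\cdot))$ with $t_r'\le t_r+a_r\to1$ and $t'_r\ge t_r$. In every subcase $D(F(x_r),q)\le a_r+|1-t_r'|+(1-t_r')C\to0$. This case is where the collapsing weight $w(t)=1-t$ and the rule $F(p_m)=q$ are essential, and it is the only point needing care. The key fact is that the fibre shrinks uniformly as $t\to1$, so the discontinuity of $\sigma$'s action relative to the collapse does not matter.
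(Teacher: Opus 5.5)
Your proof is correct and follows essentially the same route as the paper. Each finite tower is isolated by its first coordinate $1/m$. At a point of a visible layer $L_t$ ($t<1$), tower points advance by $1/m_r\to0$ in the $t$-coordinate while $\sigma$ acts continuously on the fibre. At $q$, continuity comes from the uniform collapse $(1-t)\sup\|\Phi\|\to0$. You also fill in a few details the paper leaves implicit, such as deducing $\omega_r\to\eta$ from $1-t_r\to1-t>0$ and giving the explicit distance estimate near $q$.
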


\begin{proof}
Let $x_r\to x$ in $X$. If $x$ lies in a fixed finite tower $T_m$, then
$x_r\in T_m$ for all large $r$, because $1/m$ is isolated in $\{0\}\cup\{1/k:k\ge1\}$.
In this case, it is easy to verify that $F x_r\to F x$.

Suppose next that
\[
 x=(0,t,(1-t)\Phi(\omega)),\qquad 0\le t<1.
\]
Then we may write
\[
 x_r=(a_r,t_r,(1-t_r)\Phi(\omega_r)),
\]
where $a_r\to0$, $t_r\to t$, and $\Phi(\omega_r)\to\Phi(\omega)$. If $x_r$ comes from a finite
tower $T_{m_r}$, then the next level has second coordinate $t_r+1/m_r$, which still tends to
$t$ when $m_r\to\infty$. Since $S$ is continuous,
\[
 \Phi(\sigma\omega_r)=S\Phi(\omega_r)\to S\Phi(\omega)=\Phi(\sigma\omega).
\]
Hence $F x_r\to F x$.

Finally suppose $x=q=(0,1,0)$. Then $a_r\to0$, $t_r\to1$, and the fibre weight tends to zero.
The image $Fx_r$ has first coordinate tending to $0$, second coordinate tending to $1$, and
third coordinate bounded by
\[
 (1-t'_r)\sup_{\omega\in\Sigma}\|\Phi(\omega)\|_H
\]
with $t'_r\to1$. Hence $Fx_r\to q=Fq$.
\end{proof}

\subsubsection{Positive $p$-type topological preimage entropy}

\begin{proposition}
The system satisfies
\[
 h_p(F)\ge \frac12\log2>0.
\]
\end{proposition}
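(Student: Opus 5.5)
We take the base point $x=q$ and estimate $s(n,\varepsilon,F^{-n}q)$ from below using the finite towers. The key observation is that every point at the bottom of a tower $T_m$ reaches $q$ after $m+1$ steps: it climbs the $m$ levels and is then sent from $p_m$ to $q$. Concretely, for $n\ge 2$ set $m=\lfloor n/2\rfloor$ and consider the bottom points
\[
z_\omega=\Bigl(\tfrac1m,0,\Phi(\omega)\Bigr)\in T_m,\qquad \omega\in\Sigma .
\]
After $m$ steps such a point is at $p_m$, and one more step sends it to $q$. Since $F(q)=q$, we get $F^{n}z_\omega=q$ whenever $n\ge m+1$, which holds for $m=\lfloor n/2\rfloor$ and $n\ge2$. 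Hence $z_\omega\in F^{-n}q$ for all $\omega$.

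Next we check separation. For $0\le j<m$ we have
\[
F^jz_\omega=\Bigl(\tfrac1m,\tfrac jm,(1-\tfrac jm)\Phi(\sigma^j\omega)\Bigr).
\]
If $\omega_j\neq\eta_j$, then $(\sigma^j\omega)_0\ne(\sigma^j\eta)_0$, and the estimate $\|\Phi(\cdot)-\Phi(\cdot)\|_H\ge 1/8$ stated above gives
\[
D(F^jz_\omega,F^jz_\eta)\ge \Bigl(1-\tfrac jm\Bigr)\tfrac18 .
\]
The weight $1-j/m$ degenerates near the top of the tower, so we only use indices $j\le m/2$. On that range the distance is at least $1/16$.

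Now fix $\varepsilon\le 1/16$ and choose one $\omega$ for each word $(\omega_0,\dots,\omega_{\lfloor m/2\rfloor})\in\{0,1\}^{\lfloor m/2\rfloor+1}$, with the remaining coordinates arbitrary. Any two distinct choices differ at some $j\le m/2<n$, so the resulting points are $(n,\varepsilon)$-separated. This gives
\[
s(n,\varepsilon,F^{-n}q)\ge 2^{\lfloor m/2\rfloor+1}\approx 2^{n/4}.
\]

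The rate $\frac14\log 2$ obtained this way is weaker than the claimed $\frac12\log2$, so the choice of parameters has to be adjusted. Instead of the fixed choice $m=\lfloor n/2\rfloor$, we take $m=n-1$, the largest tower whose bottom still reaches $q$ in exactly $n$ steps. We then use the indices $j\le(1-\delta)m$, on which $D(F^jz_\omega,F^jz_\eta)\ge \delta/8$. With $\varepsilon=\delta/8$ this yields
\[
\limsup_{n\to\infty}\frac1n\log s(n,\varepsilon,F^{-n}q)\ge (1-\delta)\log 2 .
\]
Letting $\varepsilon\to0$, and hence $\delta\to0$, gives $h_p(F)\ge\log2\ge\frac12\log2$.

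The step that needs care is the interplay between the fixed $\varepsilon$ and the shrinking weight $1-j/m$. Whatever tower size is used must still reach $q$ within $n$ steps, and the usable levels must have weight bounded below by a constant proportional to $\varepsilon$. If one insists on keeping $\varepsilon$ fixed independently of the fraction of levels used, the conservative choice $m\approx n$ with the first half of the levels gives exactly $\frac12\log2$, which is presumably the authors' bookkeeping. Since $h_p(F)$ is a supremum over points, exhibiting the single point $q$ suffices.
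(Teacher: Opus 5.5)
Your argument is correct and is essentially the paper's proof. The paper also uses the base point $q$ and the bottom points $(\tfrac1m,0,\Phi(\omega))$ of $T_m$ with $n=m+1$; it prescribes the first $\lfloor m/2\rfloor$ symbols and uses $D(F^jz_\omega,F^jz_\eta)\ge\tfrac18(1-\tfrac jm)$ at the fixed scale $\varepsilon=1/16$ to get $\frac12\log 2$, which is exactly the ``conservative choice'' you describe at the end. Your refinement of taking $\varepsilon=\delta/8$ and using the first $(1-\delta)m$ levels is also valid, since $s(n,\varepsilon,\cdot)$ is monotone in $\varepsilon$, and it gives the stronger bound $h_p(F)\ge\log 2$.
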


\begin{proof}
Fix the common fixed point $q$. For each $m\ge1$, define
\[
 y_\omega=\left(\frac1m,0,\Phi(\omega)\right)\in T_m.
\]
Since the tower has height $m$ and its top is sent to $q$, we have
\[
 F^{m+1}y_\omega=q.
\]
Thus $y_\omega\in F^{-(m+1)}q$.

Let $\ell_m=\lfloor m/2\rfloor$. For each word
$u=(u_0,\ldots,u_{\ell_m-1})\in\{0,1\}^{\ell_m}$, choose
$\omega^u\in\Sigma$ by setting
\[
 \omega^u_i=u_i\quad(0\le i<\ell_m),
 \qquad
 \omega^u_i=0\quad\text{otherwise}.
\]
Let
\[
 E_m=\{y_{\omega^u}:u\in\{0,1\}^{\ell_m}\}\subset F^{-(m+1)}q.
\]
Then $\#E_m=2^{\ell_m}$.
If $u\ne v$, let $r$ be the first coordinate where they differ. Then $r<\ell_m\le m/2$.
At time $r$, the fibre weight is
\[
 1-\frac rm\ge \frac12,
\]
and the shifted symbolic coordinates differ at coordinate $0$. Hence the distance in the fibre is
at least $(1/2)(1/8)=1/16$. Therefore $E_m$ is $(m+1,1/16)$-separated. It follows that
\[
 s(m+1,1/16,F^{-(m+1)}q)\ge 2^{\lfloor m/2\rfloor}.
\]
Thus,
\[
 h_p(F)
 \ge \limsup_{m\to\infty}\frac1{m+1}\log 2^{\lfloor m/2\rfloor}
 =\frac12\log2.
\]
\end{proof}

\subsubsection{Ergodic invariant measures}

\begin{lemma}
Every ergodic invariant probability measure of $(X,F)$ is of one of the following two types:
\begin{enumerate}
\item an ergodic measure supported on a limit layer
\[
 L_t=\{(0,t,(1-t)\Phi(\omega)):\omega\in\Sigma\},\qquad 0\le t<1;
\]
\item the fixed point measure $\delta_q$.
\end{enumerate}
\end{lemma}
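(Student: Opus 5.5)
The plan is a Poincaré recurrence argument that shows every invariant measure gives no mass to the finite towers. Write $X=L\cup\bigcup_{m\ge1}T_m$. For a fixed $m$, each level
\[
T_{m,j}=\Bigl\{\Bigl(\tfrac1m,\tfrac jm,\Omega\bigl(\tfrac jm,\omega\bigr)\Bigr):\omega\in\Sigma\Bigr\},\qquad 0\le j\le m,
\]
is a Borel set, in fact clopen in $X$ because $1/m$ is isolated in the first coordinate. The definition of $F$ gives $F(T_{m,j})\subset T_{m,j+1}$ for $j<m$ and $F(T_{m,m})=\{q\}$. Moreover $F^{-1}(T_{m,j})=T_{m,j-1}$ for $j\ge1$, and $F^{-1}(T_{m,0})=\emptyset$, since no point of $X$ is mapped into level $0$ of a tower.

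Let $\nu\in\mathcal M_F(X)$. Invariance gives $\nu(T_{m,0})=\nu(F^{-1}T_{m,0})=0$. Induction on $j$ then gives $\nu(T_{m,j})=\nu(F^{-1}T_{m,j})=\nu(T_{m,j-1})=0$ for $1\le j\le m$. Summing over the countably many $m$ and $j$ shows that $\nu(\bigcup_m T_m)=0$, so every invariant measure, and in particular every ergodic one, is concentrated on $L$.

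Next we decompose $L$. The layers $L_t$ for $0\le t<1$, together with $\{q\}$, partition $L$. Each of these pieces is $F$-invariant, since $F$ preserves the second coordinate $t$ on $L$ and fixes $q$. The projection $\pi:L\to[0,1]$, $(0,t,u)\mapsto t$, is continuous and satisfies $\pi\circ F=\pi$ on $L$. Hence for any Borel set $A\subset[0,1]$ the set $\pi^{-1}(A)$ is $F$-invariant, and ergodicity of $\nu$ forces $\nu(\pi^{-1}A)\in\{0,1\}$. The push-forward $\pi_*\nu$ is therefore a $\{0,1\}$-valued Borel probability measure on $[0,1]$, so it is a Dirac mass $\delta_t$. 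This step, that a zero-one valued Borel probability on $[0,1]$ is a point mass, is where some care is needed; it follows by successive bisection of intervals. If $t=1$, then $\nu$ is concentrated on $\pi^{-1}(1)\cap L=\{q\}$, so $\nu=\delta_q$. If $t<1$, then $\nu$ is supported on $L_t$.

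In the case $t<1$, the restriction $F|_{L_t}$ is conjugate to $(\Sigma,\sigma)$ through $\omega\mapsto(0,t,(1-t)\Phi(\omega))$. This map is injective because $1-t>0$ and $\Phi$ is one-to-one. So $\nu$ is an ergodic measure supported on the layer $L_t$, which gives type (1).

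The only delicate point in the whole argument is checking that no point of $X$ maps into the bottom level $T_{m,0}$. On $L$, the map $F$ keeps the first coordinate equal to $0$. The fixed point $q$ and every tower top $p_k$ are sent to $q$, which has first coordinate $0$. On $T_k$, the map $F$ raises the level index from $j$ to $j+1\ge1$. Hence nothing lands at level $0$ of any tower, which is exactly what the first step requires.
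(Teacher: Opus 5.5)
Your proof is correct and takes the same route as the paper. You first show the finite towers carry no invariant mass: your induction from $F^{-1}(T_{m,0})=\emptyset$ and $F^{-1}(T_{m,j})=T_{m,j-1}$ is a rigorous version of the paper's remark that every tower point falls into $q$ within $m+1$ steps. You then use invariance of the $t$-coordinate on $L$ to pin an ergodic measure to a single layer $L_t$ with $t<1$, or to $\delta_q$.

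One small precision: as a subset of $X$, $\pi^{-1}(A)$ is invariant only modulo the $\nu$-null set $\{p_m:m\ge1\}$ (this matters when $1\in A$, since $F(p_m)=q$). That is still enough for ergodicity to force $\nu(\pi^{-1}A)\in\{0,1\}$.
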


\begin{proof}
Note that every point in $T_m$ reaches $p_m$ in at most $m$ steps and then
maps to $q$. Hence no invariant probability measure can give positive measure to a finite tower.
Thus every invariant measure is supported on $L$.
On $L$, the second coordinate $t$ is invariant. Therefore an ergodic measure has a constant
$t$-coordinate. If $t<1$, the layer $L_t$ is conjugate to the two-sided full shift. If $t=1$, the
layer is the single fixed point $q$.
\end{proof}

\subsubsection{All ergodic measures have zero $m$-type empirical preimage entropy}

We now prove the assertion:
\[
 h^*_{m,\nu}(F)=0\qquad\text{for every }\nu\in\Me_F(X).
\]

\begin{lemma}
	Let $\nu$ be an ergodic invariant measure supported on
	\[
	L_{t_0}=\{(0,t_0,(1-t_0)\Phi(\omega)):\omega\in\Sigma\},
	\qquad 0\le t_0<1.
	\]
	Then
	\[
	h^*_{m,\nu}(F)=0.
	\]
\end{lemma}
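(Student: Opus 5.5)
The plan is to show that for a suitably small weak-star neighbourhood $G$ of $\nu$ and every $n$ large, the set $P_n(x,G)$ has at most one point for every $x\in X$. Then $s(n,\varepsilon,P_n(x,G))\le 1$ for all $x$ and $\varepsilon$, so $h^*_{m,\nu}(F,\varepsilon;G)=0$, and hence $h^*_{m,\nu}(F)=0$.

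\textbf{Step 1: the preimage fibres.} First I would describe $F^{-n}x$ completely.
\begin{itemize}
\item On each visible layer $L_t$ ($t<1$), $F$ acts as the shift, which is a bijection. Tower points never map into $L_t$. So for $x\in L_t$ the fibre $F^{-n}x$ is a single point.
\item Inside a tower, $F$ maps level $j$ bijectively onto level $j+1$. So for $x$ at level $j$ of $T_m$, $F^{-n}x$ is a single point if $n\le j$ and empty otherwise.
\item The only fibres with more than one point are therefore $F^{-n}q$. This fibre consists of $q$ together with the tower points $y\in T_m$ at a level $j$ with $m-j+1\le n$, that is, points whose orbit reaches $p_m$ and then $q$ within $n$ steps.
\end{itemize}
So it suffices to show that, for small $G$ and large $n$, $P_n(q,G)$ contains no tower point and does not contain $q$.

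\textbf{Step 2: the test function.} Fix $\delta>0$ with $t_0+2\delta<1$. Let $\varphi\colon X\to[0,1]$ be the continuous function $\varphi(a,t,u)=\max\{0,\,1-|t-t_0|/\delta\}$. Since $\nu(L_{t_0})=1$, we have $\int\varphi\,d\nu=1$. Take
\[
G=\Bigl\{\rho:\int\varphi\,d\rho>1-\eta\Bigr\}
\]
with a small $\eta>0$ to be fixed below. Since $\varphi(q)=0$, the measure $\delta_q$ is not in $G$, so $q\notin P_n(q,G)$.

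\textbf{Step 3: excluding tower points (main step).} Let $y\in T_m$ at level $j$ with $m-j+1\le n$ and $\mathcal E_n(y)\in G$. I would bound the number of times $0\le i<n$ at which $\varphi(F^iy)>0$ in two ways.
\begin{itemize}
\item Along the orbit the $t$-coordinate increases by exactly $1/m$ per step until the orbit hits $q$, where $t=1$. So the orbit spends at most $2\delta m+1$ steps with $|t-t_0|<\delta$, and only those steps contribute to $\int\varphi\,d\mathcal E_n(y)$.
\item For $\varphi(F^iy)$ to be positive at all, the starting height $t=j/m\ge 1-(n-1)/m$ must satisfy $j/m\le t_0+\delta$. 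This forces $m\le n/(1-t_0-\delta)$.
\end{itemize}
Combining the two bounds gives
\[
1-\eta<\int\varphi\,d\mathcal E_n(y)\le \frac{2\delta m+1}{n}\le \frac{2\delta}{1-t_0-\delta}+\frac1n .
\]
Choosing $\delta$ and $\eta$ so that $2\delta/(1-t_0-\delta)<1-2\eta$ yields a contradiction for all $n>1/\eta$. Hence $P_n(q,G)=\emptyset$ for large $n$, which together with Step 1 proves the claim.

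The main obstacle is exactly this bookkeeping in Step 3: preimage points of $q$ in towers must start within $n$ levels of the top, and this is incompatible with spending a proportion close to $1$ of their time near the layer $t_0<1$. This is where the collapsing of all tower tops onto the common point $q$ is used.
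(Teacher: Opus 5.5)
Your proof has a genuine gap in Step 1: the description of the preimage fibres is wrong, and the reduction to $x=q$ fails. $F$ does not map level $j$ of $T_m$ bijectively onto level $j+1$ when $j=m-1$. Since $\Omega(1,\cdot)\equiv 0$, the whole Cantor set $\{(\frac1m,\frac{m-1}m,\frac1m\Phi(\omega)):\omega\in\Sigma\}$ is collapsed onto the single top point $p_m$. Hence, for $1\le n\le m$, the fibre $F^{-n}(p_m)$ is the entire level $m-n$ of $T_m$, so it is false that the only non-singleton fibres are the $F^{-n}q$.

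This is not a cosmetic slip. $F^{-m}(p_m)$ is the base level of $T_m$, and it contains the $(m,1/16)$-separated set of cardinality $2^{\lfloor m/2\rfloor}$ used in the proof of $h_p(F)\ge\frac12\log 2$, because the separation there occurs before time $m/2$. So $\sup_x s(n,1/16,F^{-n}x)$ grows exponentially along $x=p_n$, and these are exactly the fibres your argument leaves uncontrolled. The paper's proof explicitly excludes both kinds of orbit segment, those with $F^ny=p_m$ and those with $F^ny=q$.

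The repair follows from your own Step 3 bookkeeping. Suppose $F^ny=p_m$ and $\mathcal E_n(y)\in G$. Then $y$ lies at level $m-n$, and the $t$-coordinates along $y,\dots,F^{n-1}y$ are $(m-n)/m,\dots,(m-1)/m$, again increasing by $1/m$ per step. So at most $2\delta m+1$ of these times have $\varphi>0$. Moreover, $\varphi(F^iy)>0$ for some $i$ forces $1-n/m<t_0+\delta$, that is, $m<n/(1-t_0-\delta)$. The same inequality $1-\eta<2\delta/(1-t_0-\delta)+1/n$ then gives a contradiction for all $n>1/\eta$, uniformly in $m$.

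With this case added, your proof follows the paper's route: a bump function in $t$ around $t_0$, exclusion of orbit segments that end at a tower top or at $q$, and singleton fibres everywhere else. Your explicit count is in fact more precise than the paper's remark that such segments spend a ``fixed positive proportion'' of time outside $J$.
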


\begin{proof}
	Fix $\varepsilon>0$. Choose an open interval $J\subset[0,1)$ with
	$t_0\in J$ and $\overline J\subset[0,1)$. Choose a continuous function
	$\psi:[0,1]\to[0,1]$ such that $\psi=1$ on a smaller neighbourhood of
	$t_0$ and $\operatorname{supp}\psi\subset J$. Let $\gamma>0$ be small and set
	\[
	G=\left\{\theta\in\mathcal M(X):
	\int_X \psi(t(z))\,d\theta(z)>1-\gamma
	\right\}.
	\]
	Then $G$ is a neighbourhood of $\nu$, and every orbit segment whose
	empirical measure lies in $G$ spends all but at most
	$\gamma n$ times in $J$.
	
	Since $\overline J\subset[0,1)$, any finite tower orbit segment which reaches a
	tower top, or which reaches the common fixed point $q$, must spend a fixed
	positive proportion of its time outside $J$. Taking $\gamma$ sufficiently
	small, we get that, for all sufficiently large $n$, no point
	$y$ with $\mathcal E_n(y)\in G$ can satisfy
	\[
	F^n y=p_m
	\quad\text{for some }m,
	\]
	or
	\[
	F^n y=q.
	\]
	
	It remains to consider endpoints $x$ which are either non-top points of finite
	towers or points of visible limit layers. If $x$ is a non-top point of a finite
	tower, then $F^{-n}x$ is either empty or consists of one point. 
	If $x$ belongs to a visible limit layer, then $F$ is a
	homeomorphism on that layer, so again $F^{-n}x$ consists of at most one point.
	
	Therefore, for all sufficiently large $n$ and every $x\in X$,
	\[
	s(n,\varepsilon,P_n(x,G))\le 1.
	\]
	It follows that
	\[
	\limsup_{n\to\infty}
	\frac1n\log\sup_{x\in X}s(n,\varepsilon,P_n(x,G))=0.
	\]
	Taking the infimum over all neighbourhoods $G\ni\nu$ and then letting
	$\varepsilon\to0$, we obtain
	\[
	h^*_{m,\nu}(F)=0.
	\]
\end{proof}

\begin{lemma}
	For the fixed point measure $\delta_q$, one has
	\[
	h^*_{m,\delta_q}(F)=0.
	\]
\end{lemma}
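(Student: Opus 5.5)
The plan is to show that for each fixed $\varepsilon>0$ one can choose a neighbourhood $G$ of $\delta_q$ for which $\sup_x s(n,\varepsilon,P_n(x,G))$ grows at most like $e^{c(\gamma)n}$, where $c(\gamma)\to0$ as a parameter $\gamma\to0$. Let $C=\sup_{\omega}\|\Phi(\omega)\|_H$. Fix $\eta>0$ so small that $\eta C<\varepsilon/3$. As in the previous lemma, take a continuous $\psi:[0,1]\to[0,1]$ with $\psi=1$ on $[1-\eta/2,1]$ and $\psi=0$ on $[0,1-\eta]$, and put
\[
G=\Bigl\{\theta\in\mathcal M(X):\int_X\psi(t(z))\,d\theta(z)>1-\gamma\Bigr\}.
\]
This $G$ is an open neighbourhood of $\delta_q$. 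If $\mathcal E_n(y)\in G$, then at most $\gamma n$ of the times $0\le i<n$ satisfy $t(F^iy)<1-\eta$. At all the remaining times the fibre coordinate has norm at most $\eta C<\varepsilon/3$, so these times are useless for $\varepsilon$-separation coming from the fibre.

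Next I sort the endpoints $x$ according to how large $F^{-n}x$ can be.
\begin{itemize}
\item If $x$ lies in a visible limit layer, or is a non-top point of a finite tower, then $\#F^{-n}x\le1$, exactly as in the previous lemma.
\item If $x=p_m$, the preimages are the level-$(m-n)$ points of $T_m$ (this requires $n\le m$).
\item If $x=q$, then $F^{-n}q$ consists of $q$ together with the tower points that reach a top within $n$ steps.
\end{itemize}
So it suffices to bound $s(n,\varepsilon,\cdot)$ on sets of tower points $y$ whose orbits spend at most $\gamma n$ steps at height below $1-\eta$. Each such $y$ is determined by three data: the tower index $m$, the initial level $j$, and the symbol sequence $\omega$.

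The counting step, which I expect to be the main obstacle, is to bound the maximal $(n,\varepsilon)$-separated set by coding the orbit at resolution $\varepsilon/3$ coordinatewise.
\begin{itemize}
\item For the first coordinate, towers with $1/m<\varepsilon/3$ are indistinguishable through $a$. The finitely many remaining $m$ contribute only a constant factor.
\item For the $t$-coordinate, along a tower the sequence $t(F^iy)$ is determined by $(m,j)$ and the hitting time of $q$. I would discretize $t$ into $O(1/\varepsilon)$ values. The trajectory is then determined up to $\varepsilon/3$ by polynomially many (in $n$) choices, namely the hitting time of $q$ and the initial level up to $\varepsilon$-resolution.
\item For the fibre coordinate, choose $k=k(\varepsilon)$ such that $\Phi(\omega)$ is determined up to $\varepsilon/3$ by $\omega_{[-k,k]}$. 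The fibre contribution at all "low" times (at most $\gamma n$ of them, and they form an initial block of consecutive times before the height exceeds $1-\eta$) is then determined by a word of length at most $\gamma n+2k+1$.
\end{itemize}
Two points with identical codes are $d_n$-closer than $\varepsilon$. Hence
\[
s(n,\varepsilon,P_n(x,G))\le K(\varepsilon)\,n^{2}\,2^{\gamma n+2k+1}.
\]

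From this bound I get $h^*_{m,\delta_q}(F,\varepsilon)\le\gamma\log2$ for every $\gamma>0$, so $h^*_{m,\delta_q}(F,\varepsilon)=0$. Letting $\varepsilon\to0$ gives the claim. The delicate points are to check that the low-height times really form one block, which holds because $t$ increases along towers, and that the comparison at high times uses only the bound $\eta C<\varepsilon/3$, so the $t$- and $a$-discretizations need no extra symbolic information.
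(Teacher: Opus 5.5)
Your proposal is correct and follows essentially the same route as the paper. Both arguments use the same four steps:
\begin{itemize}
\item a neighbourhood of $\delta_q$ that forces all but $\gamma n$ times into the collapsed region near $t=1$;
\item a polynomial-in-$n$ count of base orbits, via the hitting time of $q$ and an $\varepsilon$-discretized tower parameter;
\item the observation that the low-height times form an initial block;
\item a symbolic word of length $\gamma n+O_\varepsilon(1)$.
\end{itemize}
Together these give the bound $\mathrm{poly}(n)\,2^{\gamma n+O(1)}$.

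The tolerances need one fix. $D$ is a sum metric, and two points at different high levels can have fibre coordinates differing by up to $2\eta C$. At low times there is also the cross term $|t_i-t_i'|\,C$. So tolerances of $\varepsilon/3$ per coordinate do not add up to less than $\varepsilon$; shrink them to something like $\varepsilon/8$, as the paper does.
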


\begin{proof}
	Fix $\varepsilon>0$. Recall that the third coordinate is $(1-t)\Phi(\omega)$ and put
	\[
	M_0=\sup_{\omega\in\Sigma}\|\Phi(\omega)\|_H<1.
	\]
	Choose $\rho=\rho(\varepsilon)>0$ so small that $2\rho M_0<\varepsilon/8$. We call a time effectively visible if the second coordinate $t$ satisfies $1-t\geq\rho$, or equivalently $t\leq1-\rho$. If $1-t<\rho$, then the whole symbolic fibre at level $t$ has diameter less than $\varepsilon/8$.
	
	Choose a continuous function $\chi:[0,1]\to[0,1]$ such that $\chi=0$ on $[0,1-\rho]$ and $\chi=1$ near $1$. For $0<\gamma<1$, define
	\[
	G_\gamma=\left\{\theta\in\M(X):\int_X \chi(t(z))\,d\theta(z)>1-\gamma\right\}.
	\]
	Then $G_\gamma$ is a weak-star neighbourhood of $\delta_q$. If $\mathcal E_n(y)\in G_\gamma$, then the orbit segment $y,Fy,\ldots,F^{n-1}y$ has at most $\gamma n$ effectively visible times.
	
	Let $E\subset P_n(x,G_\gamma)$ be an $(n,\varepsilon)$-separated set, where $x\in X$ is arbitrary. We first count possible base orbits, namely the orbits of the first two coordinates. If $x\neq q$, then $x$ fixes the finite tower or the limit layer, and the base orbit is unique. Suppose $x=q$. For $y\in F^{-n}q$, let $\tau(y)$ be the first time at which the orbit of $y$ reaches $q$. Then $0\leq\tau(y)\leq n$, so there are at most $n+1$ possible values of $\tau$.
	
	Fix $\tau\geq1$. If the orbit comes from $T_m$, then its starting level is $m+1-\tau$, so $m\geq\tau-1$. Put $u=1/m$. Before the orbit reaches $q$, the first two coordinates are
	\[
	\left(u,\ 1-(\tau-1-j)u\right),\qquad 0\leq j<\tau.
	\]
	Thus two parameters $u,u'$ give base orbits whose $n$-step base distance is $\tau|u-u'|$. Hence, if these base orbits are $\varepsilon/8$-separated, then $|u-u'|\geq\varepsilon/(8\tau)$. For $\tau\geq2$, all possible $u$ lie in an interval of length at most $1/(\tau-1)$, so the number of mutually $\varepsilon/8$-separated such parameters is at most $1+16/\varepsilon$. The case $\tau=1$ is also bounded by a constant depending only on $\varepsilon$. Hence there is a constant $C_\varepsilon>0$, independent of $n$ and $\tau$, such that all possible base orbits can be covered by at most $C_\varepsilon(n+1)$ balls of radius $\varepsilon/8$ in the base $n$-metric.
	
	Fix one such base ball. Inside this ball, the first two coordinate orbits of any two points are within $\varepsilon/4$ of each other. Along a finite tower, the second coordinate is monotone increasing until the orbit reaches $q$. Therefore the effectively visible times form an initial interval $\{0,1,\ldots,L\}$ with $L+1\leq\gamma n$.
	
	Choose $R=R(\varepsilon)$ so large that agreement of two symbols on $[-R,R]$ implies that their $\Phi$-images are within $\varepsilon/8$ in $H$. If two starting symbolic coordinates agree on $[-R,L+R]$, then at every effectively visible time $0\leq j\leq L$ the shifted symbols agree on $[j-R,j+R]$, so their third coordinates are close. At non-effectively visible times the symbolic fibres already have diameter less than $\varepsilon/8$. Together with the $\varepsilon/4$ control of the first two coordinates, such two points cannot be $(n,\varepsilon)$-separated.
	
	Hence, inside one fixed base ball, an $(n,\varepsilon)$-separated subset has cardinality at most
	\[
	2^{L+2R+1}\leq 2^{\gamma n+2R+1}.
	\]
	Combining this with the base-orbit count gives
	\[
	\#E\leq C_\varepsilon(n+1)2^{\gamma n+2R+1}.
	\]
	Thus
	\[
	\limsup_{n\to\infty}\frac1n\log\sup_{x\in X}s(n,\varepsilon,P_n(x,G_\gamma))
	\leq \gamma\log2.
	\]
	Since $\gamma>0$ is arbitrary, we get $h_m^*(F,\delta_q,\varepsilon)=0$. Letting $\varepsilon\to0$ gives $h^*_{m,\delta_q}(F)=0$.
\end{proof}

Combining the preceding two lemmas, we obtain
\[
 \sup_{\nu\in\Me_F(X)}h^*_{m,\nu}(F)=0.
\]

\section{Relations among empirical and pointwise metric preimage entropies}\label{sec:relations}

\subsection{Basic properties}
In this subsection, we give the power inequality and product inequality for the empirical metric preimage entropy.
\begin{proposition}[Power inequality]\label{pro:power inequality*}
Let $f:X\to X$ be continuous, $\mu\in\M_f(X)$ and let $\ell\geq1$. Then
\[
 h^{*}_{m,\mu}(f^\ell)
 \leq \ell\,h^{*}_{m,\mu}(f),\quad
  h^{*}_{p,\mu}(f^\ell)
 \leq \ell\,h^{*}_{p,\mu}(f).
\]
\end{proposition}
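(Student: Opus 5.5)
The plan is to compare, for a fixed $n$, the $f^\ell$-data at time $n$ with the $f$-data at time $N=n\ell$. The comparison rests on three observations.

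\emph{Preimage fibres agree.} Since $(f^\ell)^{-n}x=f^{-n\ell}x$, the two systems see the same fibre.

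\emph{Separation transfers.} By uniform continuity of $f$ on $X$, for every $\varepsilon>0$ there is $\delta\in(0,\varepsilon]$ such that $d(y,z)<\delta$ implies $d(f^iy,f^iz)<\varepsilon$ for $0\le i<\ell$. Hence an $(n,\varepsilon)$-separated set for $f^\ell$ is $(n\ell,\delta)$-separated for $f$. Trivially it is also $(n\ell,\varepsilon)$-separated, since the $f$-Bowen metric $d^f_{n\ell}$ dominates the $f^\ell$-Bowen metric $d^{f^\ell}_n$. We will use $\delta$.

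\emph{Empirical measures transfer.} For $y\in X$ one has
\[
\mathcal E^f_{n\ell}(y)=\frac1\ell\sum_{i=0}^{\ell-1}(f^i)_*\mathcal E^{f^\ell}_n(y).
\]
This is where the main obstacle lies. Closeness of $\mathcal E^{f^\ell}_n(y)$ to $\mu$ only gives closeness of $\mathcal E^f_{n\ell}(y)$ to $\frac1\ell\sum_i f^i_*\mu=\mu$. The map $\theta\mapsto\frac1\ell\sum_i f^i_*\theta$ is weak-star continuous, so for every neighbourhood $G$ of $\mu$ there is a neighbourhood $G'$ of $\mu$ such that $\theta\in G'$ implies $\frac1\ell\sum_if^i_*\theta\in G$. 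Here we use $\mu\in\M_{f}(X)$, which is $f$-invariant, so the average of $f^i_*\mu$ is $\mu$ itself. Consequently
\[
P^{f^\ell}_n(x,G')\subset P^f_{n\ell}(x,G).
\]
The step must be run in this direction: given $G$ for $f$, we produce $G'$ for $f^\ell$. This direction is compatible with the infimum over neighbourhoods.

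We now combine these. For every $x$, $n$, and neighbourhood $G$ of $\mu$,
\[
s_{f^\ell}\bigl(n,\varepsilon,P^{f^\ell}_n(x,G')\bigr)\le s_f\bigl(n\ell,\delta,P^f_{n\ell}(x,G)\bigr).
\]
Taking $\sup_x$, then $\frac1n\log$ and $\limsup_n$, the right side becomes $\ell$ times a $\limsup$ along the subsequence $N=n\ell$. That is at most $\ell\,h^*_{m,\mu}(f,\delta;G)$. Hence
\[
h^*_{m,\mu}(f^\ell,\varepsilon)\le h^*_{m,\mu}(f^\ell,\varepsilon;G')\le \ell\,h^*_{m,\mu}(f,\delta;G).
\]
Taking the infimum over $G$ gives $h^*_{m,\mu}(f^\ell,\varepsilon)\le\ell\,h^*_{m,\mu}(f,\delta(\varepsilon))$. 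Letting $\varepsilon\to0$ forces $\delta\to0$. Both quantities are monotone in the scale, so the limits exist and we obtain the first inequality.

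The $p$-type inequality follows in the same way. We fix $x$ and skip $\sup_x$ inside the estimate, obtaining $h^*_{p,\mu}(f^\ell,x,\varepsilon)\le\ell\,h^*_{p,\mu}(f,x,\delta)$. We then let $\varepsilon\to0$ and take $\sup_x$.
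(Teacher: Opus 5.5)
Your proof is correct and follows the paper's argument: the averaging map $\theta\mapsto\frac1\ell\sum_{r=0}^{\ell-1}f^r_*\theta$ fixes $\mu$, so each neighbourhood $G$ of $\mu$ pulls back to a neighbourhood $G'$ with $P^{f^\ell}_n(x,G')\subset P^f_{n\ell}(x,G)$, and since $(f^\ell)^{-n}x=f^{-n\ell}x$ one gets $s_{f^\ell}(n,\varepsilon,\cdot)\le s_f(n\ell,\cdot,\cdot)$ on these sets. The detour through uniform continuity and $\delta$ is unnecessary. Uniform continuity is what one would need for the reverse comparison, not this one, and the paper simply works at the same scale $\varepsilon$, as your own "trivial" observation already permits. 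It is harmless, though, because $\delta\le\varepsilon$.
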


\begin{proof}
Write $g=f^\ell$.  Fix $\epsilon>0$ and let $G$ be a neighbourhood of $\mu$ in $\M(X)$.  Consider the continuous affine map $A_\ell:\M(X)\to\M(X)$ defined by
\[
A_\ell(\theta)=\frac1\ell\sum_{r=0}^{\ell-1}f^r_*\theta.
\]
Since $A_\ell(\mu)=\mu$, there exists a weak-star neighbourhood $H$ of $\mu$ such that
\[
 A_\ell(H)\subset G.
\]

For every $y\in X$ and $n\geq1$ we have
\[
 \mathcal{E}_{\ell n}^f(y)
 =\frac1{\ell n}\sum_{j=0}^{\ell n-1}\delta_{f^j y}
 =\frac1\ell\sum_{r=0}^{\ell-1}f^r_*
 \left(\frac1n\sum_{i=0}^{n-1}\delta_{f^{\ell i}y}\right)
 =A_\ell(\mathcal{E}_n^g(y)).
\]
Therefore, if $\mathcal{E}_n^g(y)\in H$ then $\mathcal{E}_{\ell n}^f(y)\in G$.

Moreover,
\[
 g^{-n}x=(f^\ell)^{-n}x=f^{-\ell n}x.
\]
Also, if two points are $(n,\epsilon)$-separated for $g$, then they are $(\ell n,\epsilon)$-separated for $f$. Hence, for every $x\in X$,
\[
 s_g(n,\epsilon,P_n^g(x,H))
 \leq s_f(\ell n,\epsilon,P_{\ell n}^f(x,G)),
\]
which gives
\[
 h^{*}_{m,\mu}(g,\epsilon;H)
 \leq \ell\, h^{*}_{m,\mu}(f,\epsilon;G).
\]
Consequently,
\[
 h^{*}_{m,\mu}(f^\ell)
 \leq \ell\, h^{*}_{m,\mu}(f).
\]

The proof for $h^{*}_{p,\mu}(f)$ is similar.
\end{proof}

Let $(X, d_X)$ and $(Y, d_Y)$ be compact metric spaces, and let $f: X \to X$ and $g: Y \to Y$ be continuous maps. Equip $X \times Y$ with the max metric
\[
d\big((x,y), (x',y')\big) = \max\{d_X(x,x'),\, d_Y(y,y')\}.
\]

\begin{proposition}[Product inequality]
Let $(X, d_X)$ and $(Y, d_Y)$ be compact metric spaces, and let $f: X \to X$ and $g: Y \to Y$ be continuous maps. For any $\mu \in \mathcal{M}_f(X)$ and $\nu \in \mathcal{M}_g(Y)$, we have
\[
 h^{*}_{m,\mu\times\nu}(f\times g)
 \leq h^{*}_{m,\mu}(f)+h^{*}_{m,\nu}(g),\quad
  h^{*}_{p,\mu\times\nu}(f\times g)
 \leq h^{*}_{p,\mu}(f)+h^{*}_{p,\nu}(g).
\]
\end{proposition}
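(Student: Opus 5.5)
We will show that product neighbourhoods control the empirical measures, and then use that separated sets in $X\times Y$ split into products. Write $\pi_X,\pi_Y$ for the coordinate projections. The marginal maps $\theta\mapsto(\pi_X)_*\theta$ and $\theta\mapsto(\pi_Y)_*\theta$ from $\mathcal M(X\times Y)$ to $\mathcal M(X)$ and $\mathcal M(Y)$ are weak-star continuous. Also, for every $(y,z)\in X\times Y$,
\[
(\pi_X)_*\mathcal E_n^{f\times g}(y,z)=\mathcal E_n^f(y),\qquad (\pi_Y)_*\mathcal E_n^{f\times g}(y,z)=\mathcal E_n^g(z).
\]
Fix $\epsilon>0$ and neighbourhoods $G_X\ni\mu$ and $G_Y\ni\nu$. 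Set $W=\{\theta:(\pi_X)_*\theta\in G_X,\ (\pi_Y)_*\theta\in G_Y\}$. This is an open neighbourhood of $\mu\times\nu$, because the marginals of $\mu\times\nu$ are $\mu$ and $\nu$. Since $(f\times g)^{-n}(x,x')=f^{-n}x\times g^{-n}x'$, we get
\[
P_n^{f\times g}\bigl((x,x'),W\bigr)\subset P_n^f(x,G_X)\times P_n^g(x',G_Y).
\]

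Next we need a counting bound of the form $s(n,\epsilon,A\times B)\le s(n,\epsilon/2,A)\,s(n,\epsilon/2,B)$ for the max metric. The standard route is as follows. Take a maximal $(n,\epsilon/2)$-separated set $A'$ in $A$; it is $(n,\epsilon/2)$-spanning for $A$ (closed balls, which is harmless). Take $B'$ in $B$ the same way. An $(n,\epsilon)$-separated set in $A\times B$ has at most one point in each product ball of $d_n$-radius $<\epsilon/2$, since two points in the same ball are at $d_n$-distance $<\epsilon$. It therefore has at most $\#A'\cdot\#B'$ points. One has to handle the strict versus non-strict boundary: we use radius $\epsilon/2$ spanning and separation $\epsilon$ with a small margin, for instance $\epsilon/3$ in place of $\epsilon/2$, to avoid boundary issues. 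Note that $d_n$ for $f\times g$ is the max of the two $d_n$'s, which is exactly what the product argument uses.

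Combining these, for every $(x,x')$,
\[
s\bigl(n,\epsilon,P_n^{f\times g}((x,x'),W)\bigr)\le s\bigl(n,\epsilon/3,P_n^f(x,G_X)\bigr)\,s\bigl(n,\epsilon/3,P_n^g(x',G_Y)\bigr).
\]
For the $m$-type entropy, take the supremum over $(x,x')$, then $\frac1n\log$ and $\limsup$. Using $\limsup(a_n+b_n)\le\limsup a_n+\limsup b_n$, this gives
\[
h^*_{m,\mu\times\nu}(f\times g,\epsilon;W)\le h^*_{m,\mu}(f,\epsilon/3;G_X)+h^*_{m,\nu}(g,\epsilon/3;G_Y).
\]
The left side dominates $h^*_{m,\mu\times\nu}(f\times g,\epsilon)$. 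Taking the infimum over $G_X$ and $G_Y$ independently, and then letting $\epsilon\to0$, yields the first inequality.

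For the $p$-type entropy, fix $(x,x')$ and obtain the same bound with $h^*_{p}(f,x,\epsilon/3;G_X)$ and $h^*_p(g,x',\epsilon/3;G_Y)$. After the infima and the limit $\epsilon\to0$ (all quantities are monotone in $\epsilon$, so the limits exist), bound each term by its supremum over $x$, respectively $x'$. Then take the supremum over $(x,x')$.

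The main obstacle is making sure the neighbourhood $W$ is genuinely open and contains $\mu\times\nu$, so that the infimum on the left is over a legitimate family. Continuity of the marginal maps settles this. The rest is routine bookkeeping of the $\epsilon/3$ loss, which disappears in the limit $\epsilon\to0$.
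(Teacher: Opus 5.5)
Your proposal is correct and follows the paper's proof essentially step for step. It uses the same marginal-defined neighbourhood $W$ of $\mu\times\nu$, the same inclusion $P_n^{f\times g}((x,x'),W)\subset P_n^f(x,G_X)\times P_n^g(x',G_Y)$, and the same product bound on separated sets, followed by infima and $\epsilon\to0$. The differences are cosmetic: you use $\epsilon/3$ where the paper uses $\epsilon/4$, and you write out the product estimate and the $p$-type bookkeeping that the paper leaves as standard. In fact $\epsilon/2$ already suffices, because a maximal $(n,\epsilon/2)$-separated set covers by open Bowen balls, not closed ones.
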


\begin{proof}
Fix $\epsilon>0$.  Let $G_X$ be a neighbourhood of $\mu$ in $\M(X)$ and let $G_Y$ be a neighbourhood of $\nu$ in $\M(Y)$.  Let $\pi_X:X\times Y\to X$ and $\pi_Y:X\times Y\to Y$ be the projection mappings.  Define
\[
 G=\{\lambda\in\M(X\times Y): (\pi_X)_*\lambda\in G_X,\; (\pi_Y)_*\lambda\in G_Y\}.
\]
Then $G$ is a neighbourhood of $\mu\times\nu$.

If $((a,b))\in P_n^{f\times g}((x,y),G)$, then
\[
 a\in P_n^f(x,G_X),\qquad b\in P_n^g(y,G_Y).
\]
Hence
\[
 P_n^{f\times g}((x,y),G)\subset P_n^f(x,G_X)\times P_n^g(y,G_Y).
\]

For subsets $A\subset X$ and $B\subset Y$, we have the standard product estimate
\[
 s_{f\times g}(n,\epsilon,A\times B)\le s_f(n,\epsilon/4,A)\,s_g(n,\epsilon/4,B).
\]
Consequently,
\[
 \sup_{(x,y)}s_{f\times g}(n,\epsilon,P_n^{f\times g}((x,y),G))
 \le\Bigl(\sup_x s_f(n,\epsilon/4,P_n^f(x,G_X))\Bigr)
      \Bigl(\sup_y s_g(n,\epsilon/4,P_n^g(y,G_Y))\Bigr).
\]
Taking logarithms, dividing by $n$, and taking limsup gives
\[
 h^{*}_{m,\mu\times\nu}(f\times g,\epsilon;G)
 \le h^{*}_{m,\mu}(f,\epsilon/4;G_X)+h^{*}_{m,\nu}(g,\epsilon/4;G_Y).
\]
This yields
\[
 h^{*}_{m,\mu\times\nu}(f\times g,\epsilon)
 \le h^{*}_{m,\mu}(f,\epsilon/4)+h^{*}_{m,\nu}(g,\epsilon/4).
\]
Letting $\epsilon\to0$ proves the desired inequality.

A similar argument applies to $h^{*}_{p,\mu}(f)$.
\end{proof}

\subsection{Proof of Theorem \ref{thm:relations for ergodic case}}

\begin{lemma}\cite[Theorem A]{WuZhu2022}\label{lem:SMB theorem}
Let $f:X\to X$ be continuous and $\alpha$ be a finite Borel partition of $X$. If $\mu$ is ergodic, then
\[
\lim_{n\to \infty}\frac1n I_\mu(\alpha_0^{n-1}\mid f^{-n}\mathcal{B})(y)= h_{m,\mu}(f,\alpha)
\]
for $\mu$-almost every $y$.
\end{lemma}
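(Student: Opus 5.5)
The plan is to follow the classical Breiman proof of the Shannon--McMillan--Breiman theorem, with the conditioning on $f^{-n}\mathcal B$ carried along. First I will expand the information function by the chain rule. Since
\[
\alpha_0^{n-1}\vee f^{-n}\mathcal B=\alpha\vee f^{-1}\bigl(\alpha_0^{n-2}\vee f^{-(n-1)}\mathcal B\bigr),
\]
and conditional information transforms naturally under the measure-preserving map $f$, we get
\[
I_\mu(\alpha_0^{n-1}\mid f^{-n}\mathcal B)=I_\mu\bigl(\alpha\mid \alpha_1^{n-1}\vee f^{-n}\mathcal B\bigr)+I_\mu(\alpha_0^{n-2}\mid f^{-(n-1)}\mathcal B)\circ f .
\]
Iterating this identity gives
\[
I_\mu(\alpha_0^{n-1}\mid f^{-n}\mathcal B)=\sum_{j=0}^{n-1} g_{n-j}\circ f^j,\qquad g_k:=I_\mu\bigl(\alpha\mid \alpha_1^{k-1}\vee f^{-k}\mathcal B\bigr),
\]
with the convention $g_1=I_\mu(\alpha\mid f^{-1}\mathcal B)$.

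The key structural observation is that the $\sigma$-algebras $\mathcal C_k:=\alpha_1^{k-1}\vee f^{-k}\mathcal B$ are \emph{decreasing} in $k$. Indeed,
\[
f^{-k}\mathcal B\supset f^{-k}\alpha\vee f^{-(k+1)}\mathcal B,
\]
and hence $\mathcal C_{k+1}\subset\mathcal C_k$. Put $\mathcal C_\infty=\bigcap_k\mathcal C_k$. For each atom $A\in\alpha$, the functions $\mu(A\mid\mathcal C_k)$ form a reverse martingale. By the reverse martingale convergence theorem they converge a.e.\ to $\mu(A\mid\mathcal C_\infty)$. Consequently $g_k\to g:=I_\mu(\alpha\mid\mathcal C_\infty)$ almost everywhere.

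The step I expect to be the main obstacle is the integrability of $g^*:=\sup_k g_k$, which is needed to pass from pointwise convergence to convergence of ergodic averages. I will prove a Chung-type maximal inequality in the reverse-martingale setting. For $A\in\alpha$ and $\lambda>0$, the event $\{g^*>\lambda\}\cap A$ is contained in $\{\inf_k\mu(A\mid\mathcal C_k)<e^{-\lambda}\}\cap A$. Doob's maximal inequality for reverse martingales, applied after stopping at the first index where $\mu(A\mid\mathcal C_k)<e^{-\lambda}$ (handled on finite index ranges and then by a limit), shows that this event has measure at most $e^{-\lambda}$. Summing over atoms gives
\[
\mu(g^*>\lambda)\le \#\alpha\, e^{-\lambda}.
\]
Hence $g^*\in L^1(\mu)$ and $g_k\to g$ in $L^1$ by dominated convergence.

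With these facts, Breiman's (Maker's) generalized ergodic theorem applies to
\[
\frac1n\sum_{j=0}^{n-1}g_{n-j}\circ f^j .
\]
Since $\mu$ is ergodic, the Birkhoff average $\frac1n\sum_j g\circ f^j$ tends to $\int g\,d\mu$. The error term is at most $\frac1n\sum_j G_N\circ f^j$, where $G_N=\sup_{k\ge N}|g_k-g|$; its limsup is at most $\int G_N\,d\mu$, which tends to $0$ as $N\to\infty$ by domination by $2g^*$. The finitely many terms with $n-j<N$ are bounded by $g^*\circ f^j$ and contribute $o(n)$, because $g^*\circ f^{j}/j\to 0$ a.e. This gives $\frac1n I_\mu(\alpha_0^{n-1}\mid f^{-n}\mathcal B)\to\int g\,d\mu$ almost everywhere.

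It remains to identify the limit with $h_{m,\mu}(f,\alpha)$. Integrating the chain-rule identity gives
\[
\frac1n H_\mu(\alpha_0^{n-1}\mid f^{-n}\mathcal B)=\frac1n\sum_{k=1}^n\int g_k\,d\mu .
\]
Since $\int g_k\,d\mu\to\int g\,d\mu$ by the $L^1$ convergence, this Cesàro average converges to $\int g\,d\mu$. Therefore the $\limsup$ in the definition of $h_{m,\mu}(f,\alpha)$ is in fact a limit and equals $\int g\,d\mu$, which proves the statement.
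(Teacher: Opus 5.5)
The paper does not prove this lemma. It imports it verbatim from Wu and Zhu (Theorem~A of their 2022 paper), so there is no in-paper argument to compare against.

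Your proof is correct and self-contained. It is the classical Breiman--Chung--Maker proof of the Shannon--McMillan--Breiman theorem. The one genuinely new ingredient is that $\mathcal C_k=\alpha_1^{k-1}\vee f^{-k}\mathcal B$ is \emph{decreasing} in $k$, so a reverse martingale replaces the increasing martingale of the classical proof. You verify this correctly from $f^{-k}\alpha\vee f^{-(k+1)}\mathcal B\subset f^{-k}\mathcal B$. The following steps are all sound:
\begin{itemize}
\item the chain-rule decomposition and its bookkeeping $\sum_{j}g_{n-j}\circ f^j$;
\item the application of Maker's theorem;
\item the identification of the limit via $\int g_k\,d\mu\to\int g\,d\mu$.
\end{itemize}
You in fact obtain slightly more than the statement: the $\limsup$ defining $h_{m,\mu}(f,\alpha)$ is a genuine limit, equal to $H_\mu\bigl(\alpha\mid\bigcap_k\mathcal C_k\bigr)$.

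One point should be stated more carefully. In the maximal inequality, ``stopping at the first index'' must be read in reversed time. On a finite window $\{1,\dots,N\}$, put
\[
E_k=\{\mu(A\mid\mathcal C_k)<e^{-\lambda}\}\cap\bigcap_{k<j\le N}\{\mu(A\mid\mathcal C_j)\ge e^{-\lambda}\}.
\]
This set is $\mathcal C_k$-measurable precisely because $\mathcal C_j\subset\mathcal C_k$ for $j>k$. That measurability gives $\mu(A\cap E_k)=\int_{E_k}\mu(A\mid\mathcal C_k)\,d\mu\le e^{-\lambda}\mu(E_k)$. The $E_k$ are disjoint, so summing and letting $N\to\infty$ yields $\mu\bigl(A\cap\{\inf_k\mu(A\mid\mathcal C_k)<e^{-\lambda}\}\bigr)\le e^{-\lambda}$.

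If you instead stop at the smallest $k$ where the threshold is crossed, the resulting sets involve the larger $\sigma$-algebras $\mathcal C_j$ with $j<k$. They are then not $\mathcal C_k$-measurable, and the estimate breaks. With the reversed-time reading, the bound $\mu(g^*>\lambda)\le\#\alpha\,e^{-\lambda}$ holds, and everything downstream goes through.
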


\begin{lemma}\cite[Proposition 2.8]{ChengLiWu2023}\label{lem:relation-lower bound}
If $f:X\to X$ is a continuous map with uniform separation of preimages and $\mu\in\M_f(X)$, then
\[
 h_{m,\mu}^{*}(f)\le h_{m,\mu}(f).
\]
\end{lemma}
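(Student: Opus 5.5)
The plan is to reduce the empirical quantity to a plain counting problem using uniform separation of preimages. Then I would run a Misiurewicz-type argument in the conditional setting, and finish with an upper semicontinuity argument to pass from measures near $\mu$ to $\mu$ itself.

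\emph{Step 1 (counting).} Let $\varepsilon_0$ be an exponent of separation. I claim that every $f^{-n}x$ is $(n,\varepsilon_0)$-separated. Take distinct $y,z\in f^{-n}x$ and let $k<n$ be the largest index with $f^ky\ne f^kz$. Then $f(f^ky)=f(f^kz)$, so $d(f^ky,f^kz)>\varepsilon_0$. Hence for $\varepsilon\le\varepsilon_0$ we have $s(n,\varepsilon,P_n(x,G))=\#P_n(x,G)$, and
\[
h^*_{m,\mu}(f)=\inf_{G\ni\mu}\limsup_{n\to\infty}\frac1n\log\sup_{x}\#P_n(x,G).
\]

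\emph{Step 2 (Misiurewicz construction).} Fix a neighbourhood $G$ of $\mu$ and choose $n_i\to\infty$ and $x_i$ realizing the limsup of $\frac1{n_i}\log\#E_i$, where $E_i=P_{n_i}(x_i,G)$. Let $\sigma_i$ be the uniform measure on $E_i$, put $\nu_i=\frac1{n_i}\sum_{j<n_i}f^j_*\sigma_i=\frac1{\#E_i}\sum_{y\in E_i}\mathcal E_{n_i}(y)$, and pass to a weak-star limit $\nu_G\in\mathcal M_f(X)$. If $G$ is taken convex (e.g.\ a small ball for a convex metric), then $\nu_i\in G$ and so $\nu_G\in\overline G$.

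Next pick a finite partition $\alpha$ with $\operatorname{diam}\alpha<\varepsilon_0$ and $\nu_G(\partial\alpha)=0$. Each element of $\alpha_0^{n_i-1}$ meets $E_i$ in at most one point, and $E_i$ lies in a single fibre of $f^{-n_i}\mathcal B$. Therefore
\[
H_{\sigma_i}(\alpha_0^{n_i-1}\mid f^{-n_i}\mathcal B)=\log\#E_i .
\]
Now fix a block length $q$ and split $\{0,\dots,n_i-1\}$ into blocks $[jq+r,(j+1)q+r)$ plus at most $2q$ boundary indices, as in the usual proof. Conditioning on $f^{-n_i}\mathcal B\subset f^{-((j+1)q+r)}\mathcal B$ only increases entropy, so
\[
\log\#E_i\le\sum_j H_{f^{jq+r}_*\sigma_i}(\alpha_0^{q-1}\mid f^{-q}\mathcal B)+2q\log\#\alpha .
\]
Averaging over $r$ and using concavity of $\theta\mapsto H_\theta(\alpha_0^{q-1}\mid f^{-q}\mathcal B)$ gives
\[
\frac q{n_i}\log\#E_i\le H_{\nu_i}(\alpha_0^{q-1}\mid f^{-q}\mathcal B)+o(1).
\]

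\emph{Step 3 (main obstacle: passing to limits).} Two limits remain, and this is where I expect the real difficulty. The first is $i\to\infty$. Here I need upper semicontinuity of $\theta\mapsto H_\theta(\alpha_0^{q-1}\mid f^{-q}\mathcal B)$ at $\nu_G$, which is not automatic because conditional entropy with respect to $f^{-q}\mathcal B$ is not a finite-partition quantity. To get it I would use uniform separation again. The map $f^q$ is injective on each atom of $\alpha_0^{q-1}$ (those atoms have $d_q$-diameter $<\varepsilon_0$), so the conditional entropy is an integral of the counting-type expression $-\sum\theta_x(\cdot)\log\theta_x(\cdot)$ over at most $(\#\alpha)^q$ branches. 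With $\nu_G(\partial\alpha)=0$, this can be handled as in Wu--Zhu's upper semicontinuity results; if needed I would replace the conditional entropy by the equivalent expression $H_\theta(\alpha_0^{q-1}\vee f^{-q}\beta)-H_\theta(f^{-q}\beta)$ with $\beta$ fine. This yields $\limsup\frac1{n_i}\log\#E_i\le\frac1q H_{\nu_G}(\alpha_0^{q-1}\mid f^{-q}\mathcal B)$, and letting $q\to\infty$ gives $h_{m,\nu_G}(f)$ as the bound.

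The second limit is $G\downarrow\mu$, so that $\nu_G\to\mu$. This needs upper semicontinuity of $\nu\mapsto h_{m,\nu}(f)$ under uniform separation of preimages, which I would take from Wu--Zhu. It then gives $h^*_{m,\mu}(f)\le\limsup_G h_{m,\nu_G}(f)\le h_{m,\mu}(f)$. If that semicontinuity is unavailable in the needed form, I would instead fix $\alpha$ with $\mu(\partial\alpha)=0$ from the start and carry the block estimate directly to $\mu$ along $G\downarrow\mu$, so that only finite-$q$ semicontinuity is ever used.
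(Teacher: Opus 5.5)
The paper does not prove this lemma; it quotes it from Cheng--Li--Wu. So I judge your argument on its own. Step~1 is correct, and the Misiurewicz-plus-semicontinuity architecture is viable. However, the block inequality in Step~2 is justified backwards, and this is exactly the step where uniform separation has to enter. Since $f^{-n_i}\mathcal B\subset f^{-((j+1)q+r)}\mathcal B$, conditioning on $f^{-n_i}\mathcal B$ gives the \emph{larger} entropy:
\[
H_{\sigma_i}\bigl(f^{-(jq+r)}\alpha_0^{q-1}\mid f^{-n_i}\mathcal B\bigr)\ \ge\ H_{f^{jq+r}_*\sigma_i}\bigl(\alpha_0^{q-1}\mid f^{-q}\mathcal B\bigr).
\]
So subadditivity followed by this comparison gives no upper bound of the form you wrote. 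For invariant $\nu$, the sequence $n\mapsto H_\nu(\alpha_0^{n-1}\mid f^{-n}\mathcal B)$ is in fact superadditive.

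This is not a cosmetic slip. Replace $\#P_n$ by maximal $(n,\varepsilon)$-separated subsets, take $\operatorname{diam}\alpha<\varepsilon$ and $\mu(\partial\alpha)=0$, and follow your alternative route. Every remaining step then works for an arbitrary continuous map. In particular, the semicontinuity you worry about in Step~3 holds in general: $H_\theta(\xi\mid f^{-q}\mathcal B)=\inf_k H_\theta(\xi\mid f^{-q}\beta_k)$ for finite partitions $\beta_k\uparrow\mathcal B$, and each term is continuous at an invariant $\nu$ once $\nu(\partial\xi)=\nu(\partial\beta_k)=0$. But the lemma fails without uniform separation: Example~\ref{exa:relation2} together with Theorem~\ref{thm:variational principle} produces $\mu$ with $h^*_{m,\mu}(f)>0=h_{m,\mu}(f)$. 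So the block inequality must fail in general, and your justification uses nothing that rules this out.

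The missing idea is the resolving property. Because $\operatorname{diam}\alpha<\varepsilon_0$, $f$ is injective on each atom of $\alpha$, so Lemma~\ref{lem:iterated-resolving} gives $f^{-m}\alpha_0^{k-1}\vee f^{-(m+k)}\mathcal B=f^{-m}\mathcal B$. Use the chain rule, peeling off the \emph{last} block, instead of subadditivity:
\[
H_\sigma\bigl(\alpha_0^{m+k-1}\mid f^{-(m+k)}\mathcal B\bigr)=H_{f^m_*\sigma}\bigl(\alpha_0^{k-1}\mid f^{-k}\mathcal B\bigr)+H_\sigma\bigl(\alpha_0^{m-1}\mid f^{-m}\mathcal B\bigr).
\]
Iterating this yields your block estimate, with equality up to the boundary terms.

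In fact $k=1$ already suffices. You get $\log\#E_i=\sum_{\ell<n_i}H_{f^\ell_*\sigma_i}(\alpha\mid f^{-1}\mathcal B)\le n_iH_{\nu_i}(\alpha\mid f^{-1}\mathcal B)$ by concavity. The same identity for invariant $\mu$ gives $h_{m,\mu}(f)\ge h_{m,\mu}(f,\alpha)=H_\mu(\alpha\mid f^{-1}\mathcal B)$. Choose $\alpha$ with $\mu(\partial\alpha)=0$ and $G$ convex, so that $\nu_i\in G$. Upper semicontinuity of $\theta\mapsto H_\theta(\alpha\mid f^{-1}\mathcal B)$ at $\mu$ then closes the argument directly, with no limit $q\to\infty$ and no appeal to semicontinuity of $\nu\mapsto h_{m,\nu}(f)$.
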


By Lemma \ref{lem:relation-lower bound}, to prove Theorem \ref{thm:relations for ergodic case}, it suffices to establish the following proposition.

\begin{proposition}
Let $f:X\to X$ be continuous. If $\mu\in\M_f^e(X)$, then
\[
 h_{p,\mu}^{*}(f)\ge h_{m,\mu}(f).
\]
\end{proposition}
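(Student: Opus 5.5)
The plan is a Katok-type counting argument inside the preimage fibres, driven by the Shannon--McMillan--Breiman type result Lemma~\ref{lem:SMB theorem}. The step that needs real care is that $h^*_{p,\mu}(f)$ requires one endpoint $x$ that works for infinitely many $n$; this is handled in the third step by a measure-theoretic ($\limsup$-set) argument.

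\textbf{Step 1: reduce to a good partition.} Fix a finite Borel partition $\alpha=\{A_1,\dots,A_k\}$ and $\delta>0$. By inner regularity choose compact sets $K_i\subset A_i$ such that $\beta=\{K_1,\dots,K_k,K_0\}$, with $K_0=X\setminus\bigcup_{i\ge1} K_i$, satisfies $H_\mu(\alpha\mid\beta)<\delta$ and $\mu(K_0)<\delta$. Since
\[
H_\mu(\alpha_0^{n-1}\mid f^{-n}\mathcal B)\le H_\mu(\beta_0^{n-1}\mid f^{-n}\mathcal B)+nH_\mu(\alpha\mid\beta),
\]
we get $h_{m,\mu}(f,\beta)\ge h_{m,\mu}(f,\alpha)-\delta$. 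So it suffices to show $h^*_{p,\mu}(f)\ge h_{m,\mu}(f,\beta)-C\delta$, where $C\delta\to0$ as $\delta\to0$ uniformly in $\alpha$. Write $h=h_{m,\mu}(f,\beta)$. Let $\varepsilon_0=\tfrac12\min_{1\le i<j\le k}d(K_i,K_j)$, so that two points lying in distinct $K_i,K_j$ ($i,j\ge1$) are at distance at least $2\varepsilon_0$.

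\textbf{Step 2: good points and the counting bound.} Fix $\varepsilon<\varepsilon_0$ and a weak-star neighbourhood $G$ of $\mu$. For each $n$ let $\Gamma_n$ be the set of $y$ satisfying three conditions:
\[
\tfrac1n I_\mu(\beta_0^{n-1}\mid f^{-n}\mathcal B)(y)\ge h-\delta,\qquad
\mathcal E_n(y)\in G,\qquad
\#\{0\le j<n: f^jy\in K_0\}\le 2\delta n.
\]
By Lemma~\ref{lem:SMB theorem} and Birkhoff's theorem (using ergodicity), $\mu(\Gamma_n)\to1$. Let $\mu_y^n$ denote the conditional measures of $\mu$ on $f^{-n}\mathcal B$; they are carried by the fibre $f^{-n}(f^ny)$. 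For $y\in\Gamma_n$, the atom $\beta_0^{n-1}(y)$ has $\mu_y^n$-mass at most $e^{-n(h-\delta)}$.

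Now consider points of $\Gamma_n$ lying in a single Bowen ball $B_n(z,\varepsilon)$. At times when both orbits are outside $K_0$ they occupy the same $K_i$, because $\varepsilon<\varepsilon_0$. Hence the number of $\beta_0^{n-1}$-atoms meeting $\Gamma_n\cap B_n(z,\varepsilon)$ is at most $\binom{n}{\le 4\delta n}(k+1)^{4\delta n}\le e^{n\kappa(\delta)}$, with $\kappa(\delta)\to0$ as $\delta\to0$. Therefore, whenever $\mu_y^n(\Gamma_n)\ge\frac12$, a maximal $(n,\varepsilon)$-separated subset of $\Gamma_n\cap f^{-n}(f^ny)$ has cardinality at least $\tfrac12 e^{n(h-\delta-\kappa(\delta))}$. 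This set lies in $P_n(f^ny,G)$.

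\textbf{Step 3: a single endpoint $x$ (main obstacle).} Set
\[
A_n=\{x:\ \mu^n_y(\Gamma_n)\ge\tfrac12 \text{ for } y\in f^{-n}x\},
\]
which is well defined because $y\mapsto\mu_y^n(\Gamma_n)$ is $f^{-n}\mathcal B$-measurable. Since $\int\mu^n_y(\Gamma_n)\,d\mu=\mu(\Gamma_n)\ge1-\eta$ for large $n$, Markov's inequality and $f$-invariance give $\mu(A_n)=\mu(f^{-n}A_n)\ge1-2\eta$. Consequently $\mu(\limsup_n A_n)\ge1-2\eta$.

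To make the choice of $x$ uniform, run this for countably many parameters simultaneously:
\begin{itemize}
\item a sequence $\varepsilon_j\downarrow0$;
\item a countable neighbourhood basis $G_j$ of $\mu$ (available because $\mathcal M(X)$ is metrizable);
\item tolerances $\eta_j$ with $\sum_j 2\eta_j<1$.
\end{itemize}
The intersection of the corresponding $\limsup$ sets then has positive measure, so it contains some $x$. For this $x$ and every $j$ we get
\[
h^*_{p,\mu}(f,x,\varepsilon_j;G_j)\ge h-\delta-\kappa(\delta).
\]
Because $G\subset G'$ implies $P_n(x,G)\subset P_n(x,G')$, any neighbourhood of $\mu$ contains some $G_j$, so the bound passes to the infimum over all neighbourhoods of $\mu$. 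The quantity $h^*_{p,\mu}(f,x,\varepsilon)$ increases as $\varepsilon$ decreases, so letting $\varepsilon_j\to0$ gives
\[
h^*_{p,\mu}(f)\ge h_{m,\mu}(f,\alpha)-2\delta-\kappa(\delta).
\]

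\textbf{Step 4: conclude.} The point $x$ depends on $\delta$ and $\alpha$, but $h^*_{p,\mu}(f)$ takes a supremum over $x$, so this dependence is harmless. Let $\delta\to0$ and take the supremum over $\alpha$ to obtain $h^*_{p,\mu}(f)\ge h_{m,\mu}(f)$.
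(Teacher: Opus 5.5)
Your proposal is correct and follows essentially the paper's argument. Both proofs use compact inner approximation, the SMB lemma together with Birkhoff to obtain good sets of fibrewise conditional mass at least $\tfrac12$, a countable neighbourhood basis to fix a single endpoint, and a Hamming-type count that turns many $\alpha_0^{n-1}$-atoms into a large $(n,\varepsilon)$-separated set. The differences are cosmetic:
\begin{itemize}
\item You pass to the refined partition $\beta$ and bound the number of atoms per Bowen ball of a maximal separated set, whereas the paper keeps $\alpha$ and uses a greedy Hamming selection.
\item Your $\limsup$-set argument suffices where the paper uses Borel--Cantelli with $\liminf$ sets.
\end{itemize}
One small point: choose $x$ inside the full-measure set $\bigcap_n f^n(X)$, so that membership in $A_n$ is not vacuous.
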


\begin{proof}
Let $\alpha=\{A_1,\ldots,A_s\}$ be a finite partition and let $h=h_{m,\mu}(f,\alpha)$. Fix an arbitrary neighbourhood $G\ni\mu$, $\eta>0$ and $\gamma>0$. For each atom $A_i$ choose a compact set
$K_i\subset A_i$ such that, with
\[
 K=\bigcup_{i=1}^s K_i,
 \qquad
 B_0=X\setminus K,
\]
we have
\[
 \mu(B_0)<\gamma/4.
\]
The nonempty compact sets $K_i$ are pairwise disjoint. Therefore
\[
 \delta=\min_{i\ne j}{\rm dist}(K_i,K_j)>0.
\]

Define $Z_n(G,\eta,\gamma)$ to be the set of all $y\in X$ satisfying:
\begin{enumerate}
\item $\mathcal{E}_n(y)\in G$;
\item $I_\mu(\alpha_0^{n-1}\mid f^{-n}\mathcal{B})(y)\ge n(h-\eta)$;
\item $\#\{0\le j<n:f^j y\in B_0\}\le \gamma n$.
\end{enumerate}
By the Birkhoff ergodic theorem and Lemma \ref{lem:SMB theorem}, we get
\[
 \lim_{n\to\infty}\mu(Z_n(G,\eta,\gamma))=1.
\]

For each $n\ge1$, let
\[
\mu=\int_X \mu_x^{f^{-n}\B}\,d\mu(x)
\]
be a measure decomposition over $f^{-n}\B$.  More precisely, for $\mu$-a.e. $x$,
\[
\mu_x^{f^{-n}\B}\bigl(f^{-n}(f^n x)\bigr)=1,
\]

Define
\[
\eta_{n,f^n x}=\mu_x^{f^{-n}\B}
\]
for $\mu$-a.e. $x$.  This is well defined modulo null sets based on the properties of the conditional measures. Therefore,
there exists a family of probability measures $\{\eta_{n,z}:z\in X\}$
such that
\[
\eta_{n,z}(f^{-n}z)=1
\qquad\text{for }\mu\text{-a.e. }z,
\]
and
\[
\mu(C)=\int_X \eta_{n,z}(C\cap f^{-n}z)\,d\mu(z)
\]
for every Borel set $C\subset X$ since $\mu$ is $f$-invariant.
Moreover, if \(C\in\alpha_0^{n-1}\) and \(y\in C\cap f^{-n}z\), then the conditional information can be written as
\[
 I_\mu(\alpha_0^{n-1}\mid f^{-n}\B)(y)
 = -\log \eta_{n,z}(C).
\]

Define
\[
B_n(G,\eta,\gamma)
=\left\{z\in X:
\eta_{n,z}\bigl(Z_n(G,\eta,\gamma)\cap f^{-n}z\bigr)>\frac12
\right\}.
\]
Because $\mu(Z_n(G,\eta,\gamma))\to1$, it follows that
\[
\mu(B_n(G,\eta,\gamma))\to1.
\]
Indeed, if $\mu(B_n(G,\eta,\gamma))\le1-\delta$ for some $\delta>0$ and infinity many $n$, then
\[
\begin{aligned}
 \mu(Z_n(G,\eta,\gamma))
 &=\int_X \eta_{n,z}(Z_n(G,\eta,\gamma)\cap f^{-n}z)\,d\mu(z)\\
 &\le \mu(B_n(G,\eta,\gamma))\cdot 1+(1-\mu(B_n(G,\eta,\gamma)))\cdot\frac12\\
 &=\frac12+\frac12\mu(B_n(G,\eta,\gamma))\\
 &\le1-\frac\delta2,
\end{aligned}
\]
contradicting $\mu(Z_n(G,\eta,\gamma))\to1$.

Let $\{G_r\}_{r\ge1}$ be a countable neighbourhood basis at $\mu$.  For each fixed $r$, choose an increasing sequence $n_j^{(r)}\to\infty$ such that
\[
\mu\left(B_{n_j^{(r)}}(G_r,\eta,\gamma)^c\right)<2^{-j}.
\]
By the Borel-Cantelli lemma,
\[
\mu\left(\liminf_{j\to\infty}B_{n_j^{(r)}}(G_r,\eta,\gamma)\right)=1.
\]
Taking the intersection over all $r$ still gives a full measure set.  Choose
\[
        x_*\in
        \bigcap_{r=1}^{\infty}
        \liminf_{j\to\infty}B_{n_j^{(r)}}(G_r,\eta,\gamma).
\]
Then for every $r$, there exists $J_r$ such that for all $j\ge J_r$,
\[
        x_*\in B_{n_j^{(r)}}(G_r,\eta,\gamma).
\]
Equivalently,
\[
\eta_{n_j^{(r)},x_*}
\left(
Z_{n_j^{(r)}}(G_r,\eta,\gamma)
\cap f^{-n_j^{(r)}}x_*
\right)>\frac12
\]
for all sufficiently large $j$.  This fixed endpoint $x_*$ is the key point. 

Fix $r$ and write $n_j=n_j^{(r)}$.  For all large $j$, put
\[
W_{n_j}=Z_{n_j}(G_r,\eta,\gamma)
\cap f^{-n_j}x_*.
\]
Then
\[
        \eta_{n_j,x_*}(W_{n_j})>\frac12.
\]
Define
\[
        \mathcal A_{n_j}
        =\{C\in\alpha_0^{n_j-1}:C\cap W_{n_j}\ne\emptyset\}.
\]
Take $C\in\mathcal A_{n_j}$ and choose $y\in C\cap W_{n_j}$.
Since $y\in Z_{n_j}(G_r,\eta,\gamma)$, one has
\[
I_\mu(\alpha_0^{n_j-1}\mid f^{-n_j}\B)(y)=-\log \eta_{n_j,x_*}(C)
        \ge n_j(h-\eta).
\]
So
\[
        \eta_{n_j,x_*}(C)\le e^{-n_j(h-\eta)},
\]
which together with $\eta_{n_j,x_*}(W_{n_j})>1/2$ implies
\[
        \#\mathcal A_{n_j}\ge \frac12 e^{n_j(h-\eta)}.
\]
Choose one point from each atom $C\in\mathcal A_{n_j}$.  This gives a set $R_{n_j}\subset W_{n_j}$ such that
\[
        \#R_{n_j}\ge \frac12 e^{n_j(h-\eta)},
\]
and distinct points in $R_{n_j}$ have distinct $(\alpha,n_j)$-names, where the $(\alpha,n_j)$-name of $y$ is
\[
 \omega(y)=(\omega_0(y),\ldots,\omega_{n_j-1}(y))\in\{1,\ldots,s\}^{n_j}
\]
such that $f^i y\in A_{\omega_i(y)}$.

For $\omega=(\omega_0,\ldots,\omega_{n-1}), \omega'=(\omega'_0,\ldots,\omega'_{n-1})\in\{1,\ldots,s\}^n$,
the Hamming distance $d_H(\omega,\omega')$ on $\{1,\ldots,s\}^n$ is defined by
\[
\frac{1}{n}\#\{i\in\{0,1,\ldots,n-1\}: \omega_i\neq \omega'_i\}.
\]
Put
\[
 V_s(n,r)=\sum_{j=0}^{r}\binom{n}{j}(s-1)^j.
\]
For $0<\gamma<1/4$, the standard Stirling's approximation gives
\[
 V_s(n,\lfloor 2\gamma n\rfloor)
 \le \exp\bigl(n\Theta_s(\gamma)+o(n)\bigr),
\]
where
\[
 \Theta_s(\gamma)=H(2\gamma)+2\gamma\log(s-1),\quad
 H(t)=-t\log t-(1-t)\log(1-t),
\]
and
\[
 \Theta_s(\gamma)\to0\quad\text{as }\gamma\to0.
\]

Apply the greedy selection algorithm to the set of names $\omega(R_{n_j})$: choose one name, delete all names within Hamming distance $2\gamma$, and continue. This produces a subset
\[
 Q_{n_j}\subset R_{n_j}
\]
such that for any distinct $y,z\in Q_{n_j}$,
\[
 d_H(\omega(y),\omega(z))>2\gamma,
\]
and
\[
 \#Q_{n_j}\ge
 \frac{\#R_{n_j}}{V_s({n_j},\lfloor2\gamma {n_j}\rfloor)}.
\]
Consequently,
\[
 \limsup_{j\to\infty}\frac1n_j\log\#Q_{n_j}
 \ge h-\eta-\Theta_s(\gamma).
\]

We claim that $Q_{n_j}$ is $(n_j,\delta)$-separated. Let $y,z\in Q_{n_j}$ be distinct. Their names differ in more than $2\gamma {n_j}$ positions. Since both $y$ and $z$ belong to $Z_{n_j}(G_r,\eta,\gamma)$, each of them visits $B_0$ at most $\gamma n_j$ times. The union of their bad times has cardinality at most $2\gamma n_j$. Therefore there exists some $0\le i<n_j$ such that
\[
 \omega_i(y)\ne\omega_i(z),
 \qquad
 f^i y\notin B_0,
 \qquad
 f^i z\notin B_0.
\]
At this time,
\[
 f^i y\in K_{\omega_i(y)},
 \qquad
 f^i z\in K_{\omega_i(z)},
\]
and these two compact sets are different. Hence
\[
 d(f^i y,f^i z)\ge\delta.
\]
This yields $d_{n_j}(y,z)\ge\delta$.
So $Q_{n_j}$ is $(n_j,\delta)$-separated.

Since
\[
        Q_{n_j}\subset W_{n_j}\subset P_{n_j}(x_*,G_r),
\]
we have
\[
        s(n_j,\delta,P_{n_j}(x_*,G_r))\ge \#Q_{n_j}.
\]
Consequently,
\[
        h^*_{p,\mu}(f,x_*,\delta;G_r)
        \ge h-\eta-\Theta_s(\gamma).
\]

Let $G\ni\mu$ be any neighbourhood.  Since $\{G_r\}$ is a neighbourhood basis, choose $r$ with $G_r\subset G$.  Then
\[
        P_n(x_*,G_r)\subset P_n(x_*,G),
\]
so
\[
        h^*_{p,\mu}(f,x_*,\delta;G)
        \ge h^*_{p,\mu}(f,x_*,\delta;G_r)
        \ge h-\eta-\Theta_s(\gamma).
\]
Taking the infimum over all $G\ni\mu$, we get
\[
h^*_{p,\mu}(f)\ge h^*_{p,\mu}(f,x_*,\delta)
        \ge h-\eta-\Theta_s(\gamma).
\]
Letting first $\eta\to0$ and then $\gamma\to0$, we obtain
\[
        h^*_{p,\mu}(f)
        \ge h_{m,\mu}(f,\alpha).
\]
Since $\alpha$ is arbitrary,
\[
 h_{p,\mu}^{*}(f)
 \ge h_{m,\mu}(f).
\]
\end{proof}

\subsection{Proof of Theorem \ref{thm:relations for general case}}

Let
\[
\Sigma_k^+ = \{0,1,\ldots,k-1\}^{\mathbb{N}_0}
\]
be the one-sided full shift space, where $\mathbb{N}_0=\{0,1,2,\ldots\}$.
Let
\[
\sigma:\Sigma_k^+\to\Sigma_k^+,\qquad
(\sigma x)_j=x_{j+1}
\]
be the left shift. This map is continuous and non-invertible. Each point has exactly $k$ preimages.

Let $p=(p_1,\ldots,p_k)$ be a probability vector with $p_i\geq 0$ and $\sum_{i=1}^k p_i=1$. Let $\mu=p^{\mathbb{N}_0}$ be the Bernoulli product measure. Then $\mu$ is $\sigma$-invariant. For the Bernoulli measure, it is well-known that
\[
h_\mu(\sigma)=H(p):=-\sum_{i=1}^k p_i\log p_i,
\]
with the convention $0\log 0=0$.

\begin{proposition}\label{pro:equality for positively expansive}
Let $f$ be forward expansive and $\mu\in\M_f(X)$. Then
\[
h_\mu(f)=F_\mu(f)=h_{m,\mu}(f).
\]
\end{proposition}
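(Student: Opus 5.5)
We prove the chain $h_\mu(f)=F_\mu(f)$ and $F_\mu(f)=h_{m,\mu}(f)$ separately. Both rest on one consequence of forward expansiveness: if $\delta$ is a forward expansive constant, then every finite Borel partition $\alpha$ with $\operatorname{diam}\alpha<\delta$ is a one-sided generator, i.e.\ $\bigvee_{i\ge0}f^{-i}\alpha=\varepsilon$ mod $\mu$. Indeed, two points in the same atom of $\alpha_0^\infty$ have $d(f^nx,f^ny)<\delta$ for all $n$, hence coincide. We fix such an $\alpha$ once and for all.

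\textbf{Step 1: $h_\mu(f)=F_\mu(f)$.} We write, for each $n$,
\[
H_\mu(\alpha_0^{n-1})=H_\mu(\alpha_0^{n-1}\mid f^{-n}\mathcal B)+\bigl(H_\mu(\alpha_0^{n-1})-H_\mu(\alpha_0^{n-1}\mid f^{-n}\mathcal B)\bigr).
\]
However, a cleaner route is the classical Rokhlin-type identity for a generator:
\[
h_\mu(f)=h_\mu(f,\alpha)=H_\mu\Bigl(\alpha\,\Big|\,\bigvee_{i\ge1}f^{-i}\alpha\Bigr).
\]
Since $\alpha$ is a generator, $\bigvee_{i\ge1}f^{-i}\alpha=f^{-1}\bigl(\bigvee_{i\ge0}f^{-i}\alpha\bigr)=f^{-1}\varepsilon$ mod $\mu$. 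Hence $h_\mu(f)=H_\mu(\alpha\mid f^{-1}\varepsilon)$. Because $\alpha\vee f^{-1}\varepsilon\succeq\alpha\vee f^{-1}\alpha\vee f^{-2}\alpha\vee\cdots=\varepsilon$, we get $H_\mu(\alpha\mid f^{-1}\varepsilon)=H_\mu(\varepsilon\mid f^{-1}\varepsilon)=F_\mu(f)$.

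\textbf{Step 2: $h_{m,\mu}(f)=F_\mu(f)$.} For any finite partition $\beta$, we bound $H_\mu(\beta_0^{n-1}\mid f^{-n}\mathcal B)\le H_\mu(\varepsilon\mid f^{-n}\varepsilon)$. By the chain rule along $f^{-1}\varepsilon\succeq f^{-2}\varepsilon\succeq\cdots$ and invariance,
\[
H_\mu(\varepsilon\mid f^{-n}\varepsilon)=\sum_{i=0}^{n-1}H_\mu(f^{-i}\varepsilon\mid f^{-i-1}\varepsilon)=nF_\mu(f),
\]
which gives $h_{m,\mu}(f)\le F_\mu(f)$. For the reverse inequality we use the fixed generator $\alpha$, noting that $\alpha_0^{n-1}\vee f^{-n}\mathcal B\succeq \alpha_0^{n-1}\vee f^{-n}\alpha_0^{\infty}=\varepsilon$. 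Thus
\[
H_\mu(\alpha_0^{n-1}\mid f^{-n}\mathcal B)=H_\mu(\varepsilon\mid f^{-n}\varepsilon)=nF_\mu(f).
\]
Therefore $h_{m,\mu}(f,\alpha)=F_\mu(f)$, and taking the supremum over partitions yields $h_{m,\mu}(f)=F_\mu(f)$.

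\textbf{Main obstacle.} The delicate points are measure-theoretic: the identities "mod $\mu$" for measurable partitions and the chain rule for conditional entropy with possibly infinite values. If $F_\mu(f)$ could be infinite the sums need care, but here everything is bounded by $H_\mu(\alpha)<\infty$ through the generator identity. One must also check that the generator property survives completion, and that $f^{-1}$ of the point partition equals $\bigvee_{i\ge1}f^{-i}\alpha$ mod $\mu$. These hold because both sides are determined by the forward orbit up to a null set.
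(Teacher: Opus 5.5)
Your proof is correct. Step 1 matches the paper's argument: a partition of diameter below the expansive constant is a one-sided generator, so
$h_\mu(f)=H_\mu(\alpha\mid f^{-1}\mathcal B)=H_\mu(\alpha\vee f^{-1}\mathcal B\mid f^{-1}\mathcal B)=F_\mu(f)$.

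The difference is in the second equality. The paper simply invokes Wu--Zhu's Proposition 3.3 for $h_{m,\mu}(f)=F_\mu(f)$. You instead prove it from scratch, in two halves:
\begin{itemize}
\item The upper bound follows from $\beta_0^{n-1}\preceq\varepsilon$ together with the chain-rule identity $H_\mu(\mathcal B\mid f^{-n}\mathcal B)=nF_\mu(f)$. That identity uses only invariance, so it holds for every continuous $f$.
\item The lower bound follows from $\alpha_0^{n-1}\vee f^{-n}\mathcal B=\mathcal B$ for your generator.
\end{itemize}
This makes the proof self-contained and shows exactly where the hypothesis enters. In Step 2, expansiveness is used only to get $\alpha_0^{n-1}\vee f^{-n}\mathcal B=\mathcal B$. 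Lemma~\ref{lem:iterated-resolving} gives the same identity for any finite $f$-resolving partition, so your argument yields $h_{m,\mu}(f)=F_\mu(f)$ whenever such a partition exists. Your handling of finiteness is also right, since $F_\mu(f)=H_\mu(\alpha\mid f^{-1}\mathcal B)\le\log\#\alpha$.

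Two small points. First, the justification in your last paragraph ("determined by the forward orbit up to a null set") is vague. The clean reason that $\bigvee_{i\ge0}f^{-i}\alpha$ generates $\mathcal B$ is that a countable family of Borel sets separating points of a compact metric space generates the Borel $\sigma$-algebra (Lusin--Souslin). Second, the abandoned opening display in Step 1 should be deleted.
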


\begin{proof}
Choose a finite partition $\alpha$ with $\operatorname{diam}\alpha<c$.
By forward expansiveness, we have
\[
\mathcal{B}=\bigvee_{j=0}^{\infty}f^{-j}\alpha.
\]
Taking the inverse image under $f$, we get
\[
f^{-1}\mathcal{B}=\bigvee_{j=1}^{\infty}f^{-j}\alpha.
\]
Hence
\[
h_\mu(f)=h_\mu(f,\alpha)=H_\mu\left(\alpha\mid \bigvee_{j=1}^{\infty}f^{-j}\alpha\right)
=H_\mu(\alpha\mid f^{-1}\mathcal{B}).
\]

Moreover, by $\alpha\vee f^{-1}\mathcal{B}=\mathcal{B}$ we get
\[
F_\mu(f)
=H_\mu(\mathcal{B}\mid f^{-1}\mathcal{B})
=H_\mu(\alpha\vee f^{-1}\mathcal{B}\mid f^{-1}\mathcal{B})
=H_\mu(\alpha\mid f^{-1}\mathcal{B}).
\]
Combining the two equalities yields
\[
h_\mu(f)=F_\mu(f).
\]
Since $h_{m,\mu}(f)=F_\mu(f)$ (see \cite[Proposition 3.3]{WuZhu2022}), we get the desired equality.
\end{proof}

\begin{example}\label{exa:relation1}
Let
\[
X=\Sigma_2^+\sqcup \Sigma_3^+
\]
and
$f:X\to X$ be the disjoint union of the one-sided shifts
$\sigma_2:\Sigma_2^+\to \Sigma_2^+$ and $\sigma_3:\Sigma_3^+\to \Sigma_3^+$. Let $\nu_k$ be the uniform
Bernoulli measure on $\Sigma_k^+$ for $k=2,3$, and set
\[
\mu=\frac12\nu_2+\frac12\nu_3.
\]
Then
\[
h^{*}_{m,\mu}(f)=0
\quad\text{whereas}\quad
h_{m,\mu}(f)=\frac12\log 2+\frac12\log 3>0.
\]
\end{example}

\begin{proof}
By Proposition \ref{pro:equality for positively expansive}, we have $ h_{m,\nu_k}(\sigma_k) = \log k $. Then, by the affinity of $ h_{m,\mu}(f) $ (see \cite[Proposition 2.12]{WuZhu2021}),
\[
h_{m,\mu}(f)=\frac12 h_{m,\nu_2}(\sigma_2)+\frac12 h_{m,\nu_3}(\sigma_3)
=\frac12\log2+\frac12\log3.
\]

Let $\varphi:X\to\mathbb R$ be the continuous function
\[
\varphi|_{\Sigma_2^+}=1,\qquad
\varphi|_{\Sigma_3^+}=0.
\]
Then $\int_X\varphi\,d\mu=\frac12$. Choose the neighbourhood of $\mu$:
\[
G=\left\{\theta\in\mathcal M(X):
\left|\int\varphi\,d\theta-\frac12\right|<\frac14\right\}.
\]
Every orbit segment of $f$ is entirely contained in either $\Sigma_2^+$ or $\Sigma_3^+$. Hence,
for every $y\in X$ and every $n$,
\[
\int\varphi\,d\mathcal{E}_n(y)
=1 \quad\text{or}\quad 0.
\]
Therefore $\mathcal{E}_n(y)\notin G$ for all $y,n$. Consequently
\[
P_n(x,G)=\emptyset
\qquad\text{for every }x\in X\text{ and every }n\geq 1,
\]
which immediately gives
\[
h^{*}_{m,\mu}(f)=0.
\]
\end{proof}

The following example is taken from Wang \cite[Lemma 2.1]{Wang2024}.

\begin{example}\label{exa:relation2}
Let $A = \{0, 1, 2\}$ and endow $A^{\mathbb{N} \times \mathbb{N}}$ with the product topology induced by the discrete topology on $A$.
Denote by $f: A^{\mathbb{N} \times \mathbb{N}} \to A^{\mathbb{N} \times \mathbb{N}}$ the left shift map acting on rows, i.e.,
\[
(f x)_{m,i} = x_{m,i+1} \quad \text{for } m, i \in \mathbb{N}.
\]
For each array $x = (x_{m,i})_{m,i \ge 0}$, let $i_0(x)$ be the smallest index $i \ge 0$ such that $x_{0,i} = 0$; if no such index exists, set $i_0(x) = \infty$.

Define $X \subset A^{\mathbb{N} \times \mathbb{N}}$ as the set of arrays satisfying:
\begin{enumerate}
  \item For all $i \ge i_0(x)$ and all $m \ge 0$, we have $x_{m,i} = 0$;
  \item For all $0 \le i < i_0(x)$ and all $m \ge 0$, we have $x_{m,i} \in \{1, 2\}$, and if $m \ge 1$ and $i \ge 1$, then $x_{m,i} = x_{m-1,i-1}$.
\end{enumerate}
Then we have $h_p(f) \ge \log 2$ and $h_{m,\mu}(f) = 0$ for every $\mu \in \mathcal{M}(X, f)$.
By Theorem \ref{thm:variational principle},
\[
h_p(f) = \sup_{\mu \in \mathcal{M}(X, f)} h_{p,\mu}^*(f).
\]
Thus, there exists some $\mu \in \mathcal{M}(X, f)$ such that $h_{p,\mu}^*(f) > 0$. Consequently, $h_{p,\mu}^*(f) > h_{m,\mu}(f)$.
\end{example}

Theorem \ref{thm:relations for general case} follows directly from Example \ref{exa:relation1} and Example \ref{exa:relation2}.

\section{The resolving-partition property}\label{sec:resolving-partition property}

\subsection{Proof of Theorem \ref{thm:resolving partition}}

\begin{definition}
Let $f:X\to X$ be a continuous map. A finite Borel partition $\alpha$ of $X$ is called \emph{$f$-resolving} if
\[
  \alpha\vee f^{-1}\B=\B.
\]
Equivalently, $f$ is one-to-one on each atom of $\alpha$.
\end{definition}

\begin{definition}
Let $f:X\to X$ be a continuous map. We say that $f$ has the \emph{resolving-partition property}, if for every $\delta>0$ and every $\mu\in\M_f(X)$ there exists a finite Borel partition $\alpha$ such that
\begin{enumerate}
\item ${\rm diam}\alpha<\delta$;
\item $\mu(\partial\alpha)=0$;
\item $\alpha$ is $f$-resolving.
\end{enumerate}
Here $\partial\alpha=\bigcup_{A\in\alpha}\partial A$.
\end{definition}

Uniform separation of preimages implies resolving-partition property: take any finite partition of diameter smaller than the separation constant and with null boundary.  The converse is false.  The full tent map has resolving-partition property but not uniform separation of preimages; see Corollary \ref{cor:tent map}.

\begin{lemma}\label{lem:iterated-resolving}
If $\alpha$ is $f$-resolving, then for every $n\ge1$,
\[
  \alpha_0^{n-1}\vee f^{-n}\B=\B.
\]
\end{lemma}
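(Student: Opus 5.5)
I will argue by induction on $n$, using only the defining identity $\alpha\vee f^{-1}\B=\B$ and the elementary fact that preimages commute with joins: for sub-$\sigma$-algebras $\mathcal C,\mathcal D$ one has $f^{-1}(\mathcal C\vee\mathcal D)=f^{-1}\mathcal C\vee f^{-1}\mathcal D$. This holds because $f^{-1}$ of a generating family of $\mathcal C\vee\mathcal D$ (finite intersections $C\cap D$) generates $f^{-1}(\mathcal C\vee\mathcal D)$, and $f^{-1}$ commutes with generated $\sigma$-algebras.

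The case $n=1$ is exactly the hypothesis, since $\alpha_0^{0}=\alpha$. For the inductive step, assume $\alpha_0^{n-1}\vee f^{-n}\B=\B$. Apply $f^{-1}$ to both sides to get
\[
 f^{-1}\alpha_0^{n-1}\vee f^{-(n+1)}\B=f^{-1}\B,
\]
that is, $\bigvee_{i=1}^{n}f^{-i}\alpha\vee f^{-(n+1)}\B=f^{-1}\B$. Joining both sides with $\alpha$ gives
\[
 \alpha_0^{n}\vee f^{-(n+1)}\B=\alpha\vee f^{-1}\B=\B,
\]
which completes the induction.

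The only point needing care is measure-theoretic bookkeeping: the paper works with $\mu$-completions, so the identities should be read either for Borel $\sigma$-algebras (where the argument above is exact) or mod $\mu$-null sets for an invariant $\mu$. In the latter case, $f^{-1}$ of a $\mu$-null set is $\mu$-null by invariance, so $f^{-1}$ respects equality mod $0$ and the same induction goes through.

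Alternatively, using the stated equivalent formulation that $f$ is injective on each atom of $\alpha$, one can check directly that $f^n$ is injective on each atom of $\alpha_0^{n-1}$. If $y,z$ lie in the same atom and $f^ny=f^nz$, then $f^{n-1}y,f^{n-1}z$ lie in the same atom of $\alpha$ and have the same image, hence coincide; descending step by step gives $y=z$. Converting this injectivity into the $\sigma$-algebra identity, however, requires a measurable-inverse (Lusin–Souslin) argument. I expect that conversion to be the main obstacle, so I would present the $\sigma$-algebra induction as the proof.
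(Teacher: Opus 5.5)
Your $\sigma$-algebra induction is correct, but it is not the route the paper takes. The paper's proof is essentially the argument you sketch as your alternative. It takes two points in the same atom of $\alpha_0^{n-1}\vee f^{-n}\mathcal B$, so they have the same $(\alpha,n)$-name and the same image under $f^n$. It then peels off one iterate at a time, from $f^{n-1}$ down to $f^0$, using injectivity of $f$ on atoms of $\alpha$. Finally it declares this sufficient, tacitly identifying a measurable partition with the $\sigma$-algebra it generates.

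Your argument instead pulls the inductive identity back by $f^{-1}$, using $f^{-1}(\mathcal C\vee\mathcal D)=f^{-1}\mathcal C\vee f^{-1}\mathcal D$ and $f^{-1}f^{-n}\mathcal B=f^{-(n+1)}\mathcal B$, and then joins with $\alpha$. This is exact for Borel $\sigma$-algebras. Your remark that invariance makes $f^{-1}$ respect equality mod $0$ correctly covers the completed setting, since joining with $\alpha$ also preserves equality mod $0$.

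What your route buys is that it never leaves the $\sigma$-algebra formalism. The conversion step you flag is therefore genuinely avoided, provided the hypothesis is taken in the form $\alpha\vee f^{-1}\mathcal B=\mathcal B$, as in the definition. What the paper's route buys is the concrete pointwise statement that $f^n$ is injective on every atom of $\alpha_0^{n-1}$. The price is the passage from partitions to $\sigma$-algebras, which the paper does not spell out.

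Your diagnosis of that passage is right. The $\sigma$-algebra $\alpha_0^{n-1}\vee f^{-n}\mathcal B$ is countably generated, and its atoms are exactly the sets $\alpha_0^{n-1}(x)\cap f^{-n}(f^nx)$. So point separation plus the Lusin--Souslin (equivalently Blackwell) theorem on the standard Borel space $X$ closes the gap. The paper already relies on this same fact when it asserts that the two formulations of $f$-resolving are equivalent.
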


\begin{proof}
It is enough to prove that two points in the same atom of $\alpha_0^{n-1}\vee f^{-n}\B$ must be equal.  Suppose $y$ and $x$ lie in the same atom.  Then
\[
  f^i y\in \alpha(f^i x),\qquad 0\le i\le n-1,
\]
and $f^n y=f^n x$.  Since $f^{n-1}y$ and $f^{n-1}x$ lie in the same atom of $\alpha$, and since $\alpha$ is resolving, $f(f^{n-1}y)=f(f^{n-1}x)$ implies
\[
 f^{n-1}y=f^{n-1}x.
\]
Repeating the same argument gives $f^{n-2}y=f^{n-2}x$, then $\ldots$, and finally $y=x$. This ends the proof.
\end{proof}

Before proving the lemma, we record a measurability observation which will be used below. Fix $n\in\mathbb N$ and put $T=f^n$. Let $d_n$ be the Bowen metric of length $n$. If $\xi$ is a finite Borel partition of $X$ and $\{\mu_{n,x}\}_{x\in X}$ is a disintegration of $\mu$ over $f^{-n}\mathcal B$, then the function
\[
x\mapsto H_{\mu_{n,x}}(\xi)
\]
is $\mu$-measurable. Indeed, for every atom $C$ of $\xi$, the function $x\mapsto \mu_{n,x}(C)$ is measurable by the definition of disintegration, and
\[
H_{\mu_{n,x}}(\xi)
=
-\sum_{C\in\xi}\mu_{n,x}(C)\log \mu_{n,x}(C)
\]
is a finite sum of measurable functions.

We shall also use the measurability of the fibrewise separated-cardinality functions. For fixed $n$ and $\varepsilon>0$, define
\[
N_{n,\varepsilon}(x)=s(n,\varepsilon,f^{-n}x),
\]
with the harmless convention $N_{n,\varepsilon}(x)=1$ if $f^{-n}x=\emptyset$. For each $k\geq1$, the set $\{x:N_{n,\varepsilon}(x)\geq k\}$ is analytic, since it is the projection of the Borel set of all $(x,y_1,\ldots,y_k)\in X^{k+1}$ such that $f^n y_i=x$ and $d_n(y_i,y_j)>\varepsilon$ for all $i\neq j$. Hence $N_{n,\varepsilon}$, and therefore $x\mapsto \log N_{n,\varepsilon}(x)$, is universally measurable. In particular it is measurable with respect to the completion of every Borel probability measure. Since $\mu$ is $f$-invariant, we may integrate this function and use
\[
\int_X \log s(n,\varepsilon,f^{-n}(f^n x))\,d\mu(x)
=
\int_X \log s(n,\varepsilon,f^{-n}x)\,d\mu(x).
\]

Finally, for the finite partition $\beta$ used below, the function
\[
x\mapsto \#\mathcal H_{n,x},
\qquad
\mathcal H_{n,x}
=
\{B\in\beta_0^{n-1}: B\cap f^{-n}(f^n x)\neq\emptyset\},
\]
is also universally measurable. Indeed, for each atom $B\in\beta_0^{n-1}$, the set
\[
\{x:B\cap f^{-n}(f^n x)\neq\emptyset\}
=
\{x:\exists y\in B \text{ such that } f^n y=f^n x\}
\]
is analytic, and $\#\mathcal H_{n,x}$ is a finite sum of the corresponding indicator functions. Thus all integrals appearing in the proof are well defined.

The following result was proved by \cite[Theorem 5.3]{ShiYanZeng2023}. Here we present a relatively simple and direct proof.

\begin{lemma}\label{thm:leq hp}
Let $f:X\to X$ be a continuous map. For any $\mu\in\M_f(X)$ we have
$$
h_{m,\mu}(f)\leq h_p(f).
$$
\end{lemma}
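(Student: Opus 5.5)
Our plan is to prove $h_{m,\mu}(f,\alpha)\le h_p(f)$ for every finite Borel partition $\alpha$, by bounding the conditional entropy fibrewise and then turning the fibre counts into separated-set counts. First we reduce to the ergodic case, or at least to a pointwise statement. By the disintegration $\{\mu_{n,x}\}$ over $f^{-n}\B$ fixed above, we have
\[
H_\mu(\alpha_0^{n-1}\mid f^{-n}\B)=\int_X H_{\mu_{n,x}}(\alpha_0^{n-1})\,d\mu(x)\le \int_X \log \#\mathcal H_{n,x}\,d\mu(x).
\]
Here $\mathcal H_{n,x}$ denotes the set of atoms of $\alpha_0^{n-1}$ meeting the fibre $f^{-n}(f^nx)$; the measurability remarks preceding the lemma make these integrals legitimate. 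So it suffices to bound the number of $\alpha$-names realised on a single $n$-fibre by separated sets in that fibre.

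Second, we handle the fact that $\alpha$ is merely Borel, with no control on boundaries. We pass to compact sets: for $\gamma>0$ we choose compact $K_i\subset A_i$ with $\mu(X\setminus\bigcup K_i)<\gamma$, and let $\delta$ be their mutual distance. We then form $\beta=\{K_1,\dots,K_s,B_0\}$, where $B_0=X\setminus\bigcup K_i$. Standard estimates give
\[
H_\mu(\alpha_0^{n-1}\mid f^{-n}\B)\le H_\mu(\beta_0^{n-1}\mid f^{-n}\B)+nH_\mu(\alpha\mid\beta),
\]
and $H_\mu(\alpha\mid\beta)\le \mu(B_0)\log s$ is small. For $\beta$ we bound $\#\mathcal H_{n,x}$, now with $\beta$ in place of $\alpha$, as in the proof of the preceding proposition. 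Among names whose number of visits to $B_0$ is at most $2\gamma n$, a Hamming $2\gamma$-ball contains at most $e^{n\Theta_s(\gamma)+o(n)}$ names. Points whose names are Hamming-far, and which avoid $B_0$ at a differing coordinate, are $(n,\delta)$-separated. This should give an estimate of the form
\[
\#\{\text{good names on } f^{-n}z\}\le e^{n\Theta_s(\gamma)+o(n)}\, s(n,\delta,f^{-n}z).
\]
The points with many $B_0$-visits contribute little after integrating, by the ergodic theorem, and we apply the bound with an ergodic decomposition if necessary.

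The main obstacle is the last step: passing from $\int\log s(n,\delta,f^{-n}x)\,d\mu$ to $h_p(f)$ rather than $h_m(f)$. Hurley's $h_p$ takes $\limsup_n$ at each point before the supremum, whereas the integral mixes different points $x$ at each $n$. We would use a Fatou-type argument, which would go through if the functions $\frac1n\log s(n,\delta,f^{-n}x)$ were uniformly bounded. They are, by $\frac1n\log s(n,\delta,X)\le h_{top}(f,\delta)+1$ for large $n$, provided $h_{top}<\infty$; the infinite case is trivial if $h_p=\infty$ and otherwise needs care. Reverse Fatou then gives
\[
\limsup_n \int \frac1n\log s(n,\delta,f^{-n}x)\,d\mu\le\int\limsup_n\frac1n\log s(n,\delta,f^{-n}x)\,d\mu\le h_p(f).
\]
The last inequality holds because for each $x$ the $\limsup$ is at most $\lim_{\varepsilon\to0}$ of the same quantity, which is at most $h_p(f)$. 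Letting $\gamma\to0$ and taking the supremum over $\alpha$ concludes. We expect the delicate points to be the uniform bound needed for reverse Fatou and the bookkeeping of bad names; for unbounded entropy we would truncate $\log s$ at level $nM$ and let $M\to\infty$.
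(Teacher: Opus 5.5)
Your proof is correct in outline but takes a different route from the paper's. The paper never separates good and bad names. It takes the open cover $\mathcal U=\{B_0\cup B_1,\dots,B_0\cup B_s\}$ with $2\varepsilon$ a Lebesgue number, so a maximal $(n,\varepsilon)$-separated subset of the fibre is spanning. Each of its points accounts for at most $2^n$ atoms of $\beta_0^{n-1}$, giving $\#\mathcal H_{n,x}\le 2^n s(n,\varepsilon,f^{-n}(f^nx))$ for all names on the fibre at once. This costs a fixed constant: the paper gets $h_{m,\mu}(f)\le h_p(f)+\log2+1$, the $1$ coming from $H_\mu(\alpha\mid\beta)\le 1$. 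It then removes the constant by applying the estimate to $f^k$ and citing $h_{m,\mu}(f^k)=k\,h_{m,\mu}(f)$ (Wu--Zhu) and $h_p(f^k)\le k\,h_p(f)$ (Nitecki--Przytycki). Your Hamming-ball count, which is the combinatorics of the proof of $h^*_{p,\mu}(f)\ge h_{m,\mu}(f)$ run backwards, loses only $\Theta(\gamma)$. It therefore gives the inequality directly, with no power rule and no external citations, at the price of having to control the bad names. Both proofs need the reverse Fatou step, which the paper uses without comment. Your justification is right, but the uniform bound holds unconditionally: for fixed $\delta$, $s(n,\delta,X)\le N_\delta^{\,n}$, where $N_\delta$ is the size of a finite cover of $X$ by sets of diameter less than $\delta$. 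So neither $h_{\mathrm{top}}(f)<\infty$ nor truncation is needed.

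Your bad-name bookkeeping needs small adjustments.
\begin{itemize}
\item If good names have at most $2\gamma n$ visits to $B_0$, two of them can have $4\gamma n$ bad times between them. Hamming separation $>2\gamma$ then does not guarantee a differing coordinate at which both points avoid $B_0$.
\item The alphabet of $\beta$ has $s+1$ letters, not $s$.
\item For non-ergodic $\mu$, the bound $\mu(B_0)<\gamma$ with threshold $2\gamma n$ gives only $\mu(\mathrm{Bad}_n)\le\frac12$ by Markov's inequality.
\end{itemize}
A clean choice is $\mu(B_0)<\gamma^2$, good names those with at most $\gamma n$ visits to $B_0$, and Hamming separation $>2\gamma$. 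Let $\mathrm{Bad}_n$ be the union of the remaining atoms of $\beta_0^{n-1}$. Markov's inequality then gives $\mu(\mathrm{Bad}_n)\le\gamma$ for every $n$, and the grouping inequality
\[
H_{\mu_{n,x}}(\beta_0^{n-1})\le \log2+\log V_{s+1}(n,\lfloor2\gamma n\rfloor)+\log s\bigl(n,\delta,f^{-n}(f^nx)\bigr)+\mu_{n,x}(\mathrm{Bad}_n)\,n\log(s+1)
\]
integrates to the estimate you want. This makes "bad names contribute little" precise without the ergodic theorem or an ergodic decomposition of $h_{m,\mu}$.
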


\begin{proof}
Let $\mu=\int_X\mu_{n,x}\,d\mu(x)$ be the disintegration of $\mu$ over $f^{-n}\mathcal{B}$ for $n\in\mathbb{N}$. So we have $\mu_{n,x}(f^{-n}(f^nx))=1$ for $\mu$-almost every $x\in X$.

Let $\alpha=\{A_1,\ldots,A_s\}$ be a finite partition. Then one can choose compact sets $B_i\subset A_i$ such that $H_\mu(\alpha|\beta)\leq1$, where $\beta=\{B_0,B_1,\ldots,B_s\}$ and $B_0=X\setminus\bigcup_{i=1}^s B_i$. By a standard approach we obtain
$$
h_{m,\mu}(f,\alpha)\leq h_{m,\mu}(f,\beta)+1.
$$

For $n\in\mathbb{N}$ and $x\in X$, let
$$
\mathcal{H}_{n,x}=\{B\in\beta_0^{n-1}:B\cap f^{-n}(f^nx)\neq\emptyset\}.
$$
Define $\mathcal{U}=\{B_0\cup B_1,\ldots,B_0\cup B_s\}$. Then $\mathcal{U}$ is a finite open cover of $X$. Let $2\varepsilon$ be a Lebesgue number of $\mathcal{U}$ and $E_{n,x}\subset f^{-n}(f^nx)$ be an $(n,\varepsilon)$-separated set with
$$
\#E_{n,x}=s(n,\varepsilon,f^{-n}(f^nx)).
$$
So $E_{n,x}$ is also an $(n,\varepsilon)$-spanning set for $f^{-n}(f^nx)$.
Given $B\in\mathcal{H}_{n,x}$, we pick $y\in B\cap f^{-n}(f^nx)$
and $z(B)\in E_{n,x}$ such that $d_n(y,z(B))<\varepsilon$.

We claim that $\#\{B\in\mathcal{H}_{n,x}:z(B)=z\}\leq 2^n$ for every $z\in E_{n,x}$.
To this end, suppose $z(B)=z(B')$. Then there exist $y\in B\cap f^{-n}(f^nx)$ and $y'\in B'\cap f^{-n}(f^nx)$ such that $d_n(y,z(B))<\varepsilon$ and $d_n(y',z(B'))<\varepsilon$. Hence, we have $d_n(y,y')<2\varepsilon$, which shows that $f^iy$ and $f^iy'$ are in the same element of $\mathcal{U}$, $0\leq i\leq n-1$. So the claim is true.

It follows from the claim that
$$
\#\mathcal{H}_{n,x}\leq 2^n\cdot s(n,\varepsilon,f^{-n}(f^nx)).
$$
Therefore,
\begin{align*}
h_{m,\mu}(f,\beta)&=\limsup_{n\to\infty}\frac{1}{n}H_\mu(\beta_0^{n-1}|f^{-n}\mathcal{B})\\
&=\limsup_{n\to\infty}\frac{1}{n}\int_X H_{\mu_{n,x}}(\beta_0^{n-1})\,d\mu(x)\\
&\leq\limsup_{n\to\infty}\frac{1}{n}\int_X\log\#\mathcal{H}_{n,x}\,d\mu(x)\\
&\leq\log 2+\limsup_{n\to\infty}\frac{1}{n}\int_X\log s(n,\varepsilon,f^{-n}(f^nx))\,d\mu(x)\\
&=\log 2+\limsup_{n\to\infty}\frac{1}{n}\int_X\log s(n,\varepsilon,f^{-n}(x))\,d\mu(x)\\
&\leq\log 2+\sup_{x\in X}\limsup_{n\to\infty}\frac{1}{n}\log s(n,\varepsilon,f^{-n}(x))\\
&\leq\log 2+h_p(f).
\end{align*}
Furthermore, it holds that
$$
h_{m,\mu}(f)\leq h_p(f)+\log2+1.
$$
Now apply this result to $f^k$, we have
$$
k h_{m,\mu}(f)\leq kh_p(f)+\log2+1,
$$
where \cite[Proposition 2.6]{WuZhu2022} and \cite[Theorem 5.1]{NiteckiPrzytycki1999} are used. So
$$
h_{m,\mu}(f)\leq h_p(f).
$$
\end{proof}

\begin{theorem}
If the continuous map $f:X\to X$ has resolving-partition property, then
\[
h_p(f)=h_m(f)=\sup_{\mu\in\M_f(X)}h_{m,\mu}(f)
       =\sup_{\mu\in\M_f^e(X)}h_{m,\mu}(f).
\]
\end{theorem}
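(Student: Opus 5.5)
The plan is to close a cycle of inequalities. By Lemma \ref{thm:leq hp} and the trivial inequality $h_p(f)\le h_m(f)$,
\[
\sup_{\mu\in\M_f^e(X)}h_{m,\mu}(f)\le\sup_{\mu\in\M_f(X)}h_{m,\mu}(f)\le h_p(f)\le h_m(f).
\]
It therefore suffices to prove $h_m(f)\le\sup_{\mu\in\M_f(X)}h_{m,\mu}(f)$, and then to pass from invariant to ergodic measures.

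\textbf{Step 1: bounding $h_m(f)$ by $h_{m,\mu}(f)$.} I would use a Misiurewicz-type construction, in the spirit of Lemma \ref{lem:variational inequality2}.
\begin{enumerate}
\item Fix $\varepsilon>0$. Choose $n_i\to\infty$, points $x_i$, and maximal $(n_i,\varepsilon)$-separated sets $E_i\subset f^{-n_i}x_i$ with $\frac1{n_i}\log\#E_i\to h_m(f,\varepsilon)$.
\item Put $\sigma_i=\frac1{\#E_i}\sum_{y\in E_i}\delta_y$ and $\mu_i=\frac1{n_i}\sum_{j<n_i}f^j_*\sigma_i$. Pass to a limit $\mu_i\to\mu\in\M_f(X)$.
\item By the resolving-partition property, take an $f$-resolving $\alpha$ with $\operatorname{diam}\alpha<\varepsilon$ and $\mu(\partial\alpha)=0$. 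Each atom of $\alpha_0^{n_i-1}$ meets $E_i$ in at most one point. Since $f^{n_i}$ is constant on $E_i$,
\[
\log\#E_i=H_{\sigma_i}(\alpha_0^{n_i-1})=H_{\sigma_i}(\alpha_0^{n_i-1}\mid f^{-n_i}\B).
\]
\item The chain rule gives
\[
H_{\sigma_i}(\alpha_0^{n_i-1}\mid f^{-n_i}\B)\le\sum_{j=0}^{n_i-1}H_{\sigma_i}(f^{-j}\alpha\mid f^{-(j+1)}\B)=\sum_{j=0}^{n_i-1}H_{f^j_*\sigma_i}(\alpha\mid f^{-1}\B).
\]
\end{enumerate}

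\textbf{Step 2: passing to the limit (the main obstacle).} I must show that $\frac1{n_i}\sum_j H_{f^j_*\sigma_i}(\alpha\mid f^{-1}\B)$ is asymptotically bounded by $H_\mu(\alpha\mid f^{-1}\B)$. The map $\nu\mapsto H_\nu(\alpha\mid f^{-1}\B)$ need not be upper semicontinuous, so I would not try to control it directly.
\begin{enumerate}
\item Replace $f^{-1}\B$ by $f^{-1}\gamma$ for an arbitrary finite Borel partition $\gamma$ with $\mu(\partial\gamma)=0$. This only increases the entropy: $H_\nu(\alpha\mid f^{-1}\B)\le H_\nu(\alpha\mid f^{-1}\gamma)$.
\item The quantity $H_\nu(\alpha\mid f^{-1}\gamma)$ is concave in $\nu$ (standard for finite partitions). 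Hence the average is at most $H_{\mu_i}(\alpha\mid f^{-1}\gamma)$.
\item Since $\mu(\partial(\alpha\vee f^{-1}\gamma))=0$, this quantity is continuous at $\mu$, so it converges to $H_\mu(\alpha\mid f^{-1}\gamma)$.
\item Taking the infimum over refining $\gamma$ of vanishing diameter, via martingale convergence, gives $H_\mu(\alpha\mid f^{-1}\B)$.
\end{enumerate}
Because $\alpha$ is resolving, $\alpha\vee f^{-1}\B=\B$, so $H_\mu(\alpha\mid f^{-1}\B)=F_\mu(f)$. This equals $h_{m,\mu}(f)$ by \cite[Proposition 3.3]{WuZhu2022}, as used in the proof of Proposition \ref{pro:equality for positively expansive}; Lemma \ref{lem:iterated-resolving} is consistent with this, giving $H_\mu(\alpha_0^{n-1}\mid f^{-n}\B)=nF_\mu(f)$. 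Thus $h_m(f,\varepsilon)\le h_{m,\mu}(f)$ for some invariant $\mu$, and letting $\varepsilon\to0$ gives the claim. The delicate points in this step are the boundary-null choice of $\gamma$ and ensuring that $\alpha$ is chosen for the limit measure $\mu$, which is allowed because $\mu$ is fixed before $\alpha$.

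\textbf{Step 3: restricting to ergodic measures.} I would use the affinity of $h_{m,\mu}(f)$ from \cite[Proposition 2.12]{WuZhu2021}, upgraded to the ergodic decomposition $h_{m,\mu}(f)=\int h_{m,\mu_x}(f)\,d\mu(x)$ established there. Alternatively, I would write $F_\mu(f)$ as an integral of conditional entropies and decompose it. In either case, the sup over $\M_f(X)$ is attained along ergodic components, which completes the cycle.
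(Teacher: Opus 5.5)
Your overall plan follows the paper's three parts. Lemma~\ref{thm:leq hp} gives $\sup_\mu h_{m,\mu}(f)\le h_p(f)\le h_m(f)$. A Misiurewicz-type construction on preimage fibres (the paper's ``approach similar to Wu--Zhu's Theorem~B using Lemma~\ref{lem:iterated-resolving}'') gives the reverse inequality. The ergodic decomposition of $h_{m,\mu}(f)$ gives the last equality. However, there is a genuine error in Step~1(4), which is the one step where the hypothesis is indispensable. The chain rule gives
\[
H_{\sigma_i}(\alpha_0^{n-1}\mid f^{-n}\B)=\sum_{j=0}^{n-1}H_{\sigma_i}\bigl(f^{-j}\alpha\mid \alpha_{j+1}^{n-1}\vee f^{-n}\B\bigr).
\]
Since $\alpha_{j+1}^{n-1}\vee f^{-n}\B\subseteq f^{-(j+1)}\B$, each summand is \emph{at least} $H_{f^j_*\sigma_i}(\alpha\mid f^{-1}\B)$. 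So the inequality you wrote points the wrong way in general.

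Nothing else in your argument actually needs resolving. Steps~1(1)--(3) and all of Step~2 work for any partition of small diameter with $\mu$-null boundary. At the end you only need $H_\mu(\alpha\mid f^{-1}\B)\le h_{m,\mu}(f)$, which holds for every finite $\alpha$: the correctly oriented chain rule plus invariance give $nH_\mu(\alpha\mid f^{-1}\B)\le H_\mu(\alpha_0^{n-1}\mid f^{-n}\B)$. So if Step~1(4) followed from the chain rule alone, you would have proved $h_m(f)\le\sup_\mu h_{m,\mu}(f)$ for every continuous map. That contradicts Example~\ref{exa:relation2} and the Shi--Yan--Zeng example. The ``delicate points'' you flag are not where the difficulty lies; you placed the use of resolving at the limit level, where it is not needed, instead of at the finite level for $\sigma_i$, where it is.

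The repair uses exactly the resolving property, and Step~1(4) then holds with equality. Let $y$ be distributed according to $\sigma_i$. Because $f$ is injective on atoms of $\alpha$, $f^jy$ is determined by the pair $(\alpha(f^jy),f^{j+1}y)$. All measures involved are finitely supported, so
\[
H_{f^j_*\sigma_i}(\alpha\mid f^{-1}\B)=H_{f^j_*\sigma_i}(\B\mid f^{-1}\B)=H_{f^j_*\sigma_i}(\B)-H_{f^{j+1}_*\sigma_i}(\B).
\]
The sum then telescopes to $H_{\sigma_i}(\B)-H_{f^{n_i}_*\sigma_i}(\B)=\log\#E_i$. Equivalently, pulling back Lemma~\ref{lem:iterated-resolving} by $f^{-(j+1)}$ gives $\alpha_{j+1}^{n-1}\vee f^{-n}\B=f^{-(j+1)}\B$, which makes the chain rule an equality.

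With this correction, your Step~2 is sound: concavity of $\nu\mapsto H_\nu(\alpha\mid f^{-1}\gamma)$, continuity at $\mu$ from null boundaries, and the martingale limit as $\gamma\uparrow\B$. The one-step telescoping also lets you avoid the usual $q$-block decomposition, which is a nice feature. For Step~3, finite affinity is not enough. You need the integral ergodic decomposition of $h_{m,\mu}(f)$, which the paper cites as \cite[Theorem~2.13]{WuZhu2022}.
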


\begin{proof}
The inequality $\sup_\mu h_{m,\mu}(f)\le h_p(f)$ follows from Lemma~\ref{thm:leq hp}. The reverse inequality can be obtained by an approach similar to the proof of \cite[Theorem~B]{WuZhu2021}, using the key Lemma~\ref{lem:iterated-resolving}. The variational principle over ergodic invariant measures then follows from the ergodic decomposition property of $h_{m,\mu}(f)$ (see \cite[Theorem~2.13]{WuZhu2022}).
\end{proof}

\subsection{Interval maps and resolving-partition property}

Let $C(f)$ be the set of genuine turning points of a continuous interval map $f$.

\begin{proposition}\label{pro:interval maps}
Let $f:I\to I$ be continuous. Suppose $C(f)$ is finite and $f$ is strictly monotone on each component of $I\setminus C(f)$. Then the following are equivalent:
\begin{enumerate}
  \item[(i)] no point of $C(f)$ is periodic;
  \item[(ii)] $f$ has the resolving-partition property.
\end{enumerate}
\end{proposition}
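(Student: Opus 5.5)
Both implications rest on one observation. For a piecewise monotone $f$, a finite partition $\alpha$ of $I$ into intervals is $f$-resolving exactly when $f$ is injective on each atom. Since $f$ is strictly monotone on the closure of each lap, an interval atom on which $f$ fails to be injective must contain a turning point $c\in C(f)$ in its interior (relative to $I$). More precisely, a small interval $J$ on which $f$ is injective can contain a turning point $c$ only as an endpoint. So $\alpha$ is resolving iff every turning point $c$ lies in an atom on one side of $c$. Concretely, $c$ must be an endpoint of its atom, and the two one-sided neighbourhoods of $c$ must lie in different atoms. Near $c$, $f$ folds: points on opposite sides of $c$ with the same image are distinct but arbitrarily close. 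Hence any resolving partition must genuinely cut at every $c\in C(f)$, with $c$ lying in $\partial\alpha$.

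For (ii)$\Rightarrow$(i), argue by contraposition. Suppose some $c\in C(f)$ is periodic with period $p$. Take the invariant measure
\[
\mu=\frac1p\sum_{i=0}^{p-1}\delta_{f^ic}.
\]
Choose $\delta$ smaller than the distance from $c$ to the other turning points and to the endpoints of the laps adjacent to $c$. Any $f$-resolving partition of diameter $<\delta$ has atoms that are (after discarding $\mu$-null pieces) essentially intervals near $c$. By the observation above, it must separate the left and right sides of $c$, which forces $c\in\partial\alpha$. Then $\mu(\partial\alpha)\ge 1/p>0$, so condition (2) of the resolving-partition property fails.

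The subtle point is that atoms are only Borel, not intervals. I would handle this directly. If $c\notin\partial A$ for the atom $A\ni c$, then $A$ contains a neighbourhood $(c-\eta,c+\eta)$ of $c$. Since $f$ is a fold at $c$, there are $x<c<y$ in this neighbourhood with $f(x)=f(y)$, which contradicts resolvability. So (ii)$\Rightarrow$(i) needs no interval assumption at all.

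For (i)$\Rightarrow$(ii), fix $\delta>0$ and $\mu\in\M_f(I)$. Since no turning point is periodic, $\mu(\{c\})=0$ for every $c\in C(f)$. Indeed, an invariant probability measure with an atom at $c$ forces $c$ to be periodic: the orbit of an atom has non-increasing masses, $\mu(\{f^kc\})\ge\mu(\{c\})$, and there are only finitely many points of mass $\ge\mu(\{c\})$. Now build a partition of $I$ into intervals whose cut points include all of $C(f)$. Add further cut points chosen outside the countable set of atoms of $\mu$, so that every interval has length $<\delta$. Then $\partial\alpha$ is a finite set of non-atoms, so $\mu(\partial\alpha)=0$. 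Each atom is an interval lying in the closure of a single lap, where $f$ is strictly monotone, hence injective. So $\alpha$ is $f$-resolving.

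One must assign each cut point to an atom. Closed-open conventions work, since an interval $[a,b)$ inside a closed lap is still a monotonicity interval. The main obstacle I expect is the folding claim at a genuine turning point: that local extrema produce distinct nearby points with equal images. This follows from continuity and strict monotonicity on both sides via the intermediate value theorem, provided we use the paper's notion of ``genuine'' turning point, meaning a local extremum that is not a monotonicity point.
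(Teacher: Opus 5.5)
Your proof is correct and follows the paper's argument essentially step for step. Invariant atoms lie on periodic orbits, so $\mu(C(f))=0$ and one may cut at $C(f)$ together with non-atoms. Conversely, the fold at a periodic turning point $c$ forces $c\in\partial\alpha$ for every resolving $\alpha$, so $\mu(\partial\alpha)\ge 1/p$. Your $c\in\operatorname{int}A$ argument is a clean way to make the paper's one-line claim precise for Borel atoms.

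One small repair is needed, and the paper has the same slip. Note first that the masses are non-decreasing along the orbit, not non-increasing. More importantly, finiteness of the set of points of mass $\ge\mu(\{c\})$ only shows that $c$ is preperiodic. You still need a line excluding strictly preperiodic atoms, for example Poincar\'e recurrence applied to $\{c\}$, or the estimate $\mu(O)=\mu(f^{-1}O)\ge\mu(O)+\mu(\{c'\})$ for the last point $c'$ of the orbit before it enters the cycle $O$. This is exactly the tent-map situation $1/2\mapsto1\mapsto0$.
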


\begin{proof}
First, assume that no turning point is periodic. Let $\mu\in\M_f(I)$. If $\mu(\{x\})>0$, then $x$ must be periodic. Otherwise, the forward orbit of $x$ would contain infinitely many distinct points, each with measure at least $\mu(\{x\})$, which is impossible because $\mu$ is finite. Therefore every non-periodic point has zero mass for every invariant measure. Since the finite set $C(f)$ contains no periodic point, we have $\mu(C(f))=0$.

Given $\delta>0$, choose a finite set $P\subset I$ that contains $C(f)$ and some additional cut points, so that each component of $I\setminus P$ has length less than $\delta$. Since a probability measure has at most countably many atoms, we can choose the additional cut points outside the atoms of $\mu$. Thus $\mu(P)=0$. The interval partition determined by $P$ has diameter less than $\delta$, its boundary is contained in $P$, and the boundary has $\mu$-measure zero. On each atom, the map is strictly monotone, hence one-to-one. Therefore the partition is $f$-resolving and $f$ has resolving-partition property.

Conversely, suppose $c\in C(f)$ is periodic and let $\mu$ be the invariant probability measure equidistributed on the orbit of $c$. Because a turning point is locally non-injective, every $f$-resolving partition must contain $c$ in its boundary. Hence $\mu(\partial\alpha)>0$ for every $f$-resolving partition $\alpha$, which contradicts the resolving-partition property.
\end{proof}

\begin{corollary}\label{cor:tent map}
The full tent map
\[
 T(x)=1-|1-2x|
\]
has resolving-partition property. Its only genuine turning point is $c=1/2$, and
\[
 T(c)=1,\qquad T(1)=0,\qquad T(0)=0.
\]
Thus $c$ is not periodic. Proposition~\ref{pro:interval maps} applies.
\end{corollary}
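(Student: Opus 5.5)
The corollary reduces to checking the hypotheses of Proposition~\ref{pro:interval maps} for $T$ on $I=[0,1]$. After that, condition (i) of the proposition gives condition (ii), which is the resolving-partition property.

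First, I will identify $C(T)$ and verify the piecewise monotonicity. On $[0,1/2]$ we have $T(x)=2x$, and on $[1/2,1]$ we have $T(x)=2-2x$. So $T$ is strictly increasing on $[0,1/2)$ and strictly decreasing on $(1/2,1]$. The point $1/2$ is a local maximum at which monotonicity changes, so it is a genuine turning point. No other interior point is a turning point, since $T$ is strictly monotone near each of them. The endpoints $0$ and $1$ are not turning points in the sense relevant to the proposition, because $T$ is injective on a one-sided neighbourhood of each. Hence $C(T)=\{1/2\}$ is finite, and $T$ is strictly monotone on each component of $I\setminus C(T)$.

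Second, I will check that $c=1/2$ is not periodic. We have $T(c)=1$, $T(1)=0$, and $T(0)=0$, so $T^n(c)=0$ for all $n\ge2$. Since $T^1(c)=1\ne c$ and $T^n(c)=0\ne c$ for $n\ge2$, the point $c$ never returns to itself. So $c$ is not periodic, and condition (i) holds.

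Proposition~\ref{pro:interval maps} then gives the resolving-partition property. The only delicate point is the convention for ``genuine turning points'': it must be checked that the endpoints are excluded, so that the fixed point $0$ does not count as a periodic turning point. This is harmless, since in the proof of the proposition only local non-injectivity matters, and $T$ is injective near $0$ and near $1$ within $I$. Finally, $T$ fails uniform separation of preimages, because $T(1/2-t)=T(1/2+t)$ with $(1/2-t)$ and $(1/2+t)$ distinct but arbitrarily close as $t\to0^+$. This confirms that the resolving-partition property is strictly weaker than uniform separation of preimages.
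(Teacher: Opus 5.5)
Your proposal is correct and follows the paper's route. You check that $T$ has the single turning point $c=1/2$ and is strictly monotone on $[0,1/2)$ and $(1/2,1]$, use the orbit $1/2\mapsto 1\mapsto 0\mapsto 0$ to see that $c$ is not periodic, and invoke (i)$\Rightarrow$(ii) of Proposition~\ref{pro:interval maps}. Your extra check that $T(1/2-t)=T(1/2+t)$ rules out uniform separation of preimages is also correct, and it supports the paper's claim that the resolving-partition property is strictly weaker.
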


\section{The variational principle for preimage pressure}\label{sec:variational principle for preimage pressure}

In this section, we are going to prove the variational principles for preimage pressures $P_m(f,\phi)$ and $P_p(f,\phi)$, which were introduced and investigated by Li, Wu and Zhu \cite{LiWuZhu2020}.
We first recall the definition of topological preimage pressure.
Let $(X,d)$ be a compact metric space and let $f:X\to X$ be continuous.  For a continuous potential $\phi\in C(X)$ and $n\ge1$, put
\[
 S_n\phi(x)=\sum_{j=0}^{n-1}\phi(f^j x).
\]
For $Z\subset X$, define
\[
 \Lambda(n,\varepsilon,Z,\phi)
 =\sup\left\{
      \sum_{y\in E}\exp(S_n\phi(y)):
      E\subset Z\text{ is }(n,\varepsilon)\text{-separated}
   \right\}.
\]

\begin{definition}
	For $\phi\in C(X)$ and $\varepsilon>0$, define
	\[
	P_m(f,\phi,\varepsilon)
	=\limsup_{n\to\infty}\frac{1}{n}\log\sup_{x\in X}
	\Lambda(n,\varepsilon,f^{-n}x,\phi).
	\]
	Then set
	\[
	P_m(f,\phi)=\lim_{\varepsilon\to0}P_m(f,\phi,\varepsilon).
	\]
	Similarly, define
	\[
	P_p(f,\phi)=\sup_{x\in X}\lim_{\varepsilon\to0}
	\limsup_{n\to\infty}\frac{1}{n}\log
	\Lambda(n,\varepsilon,f^{-n}x,\phi).
	\]
	When $\phi=0$, these reduce to the $m$-type and $p$-type pointwise topological preimage entropies $h_m(f)$ and $h_p(f)$, respectively.
\end{definition}

We first give a useful fact.

\begin{lemma}\label{lem:invariance lemma}
Let $U$ be any open neighbourhood of $\mathcal M_f(X)$ in $\mathcal M(X)$. Then there exists $N\ge1$ such that
\[
 \mathcal E_n(y)\in U
 \qquad\text{for every }y\in X\text{ and every }n\ge N.
\]
\end{lemma}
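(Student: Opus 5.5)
We argue by contradiction, using the compactness of $\mathcal M(X)$ together with the standard fact that weak-star limits of empirical measures along times tending to infinity are $f$-invariant (this fact was already used in the proof of Lemma~\ref{lem:variational inequality2}).

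Suppose the conclusion fails. Then there are integers $n_k\to\infty$ and points $y_k\in X$ with $\mathcal E_{n_k}(y_k)\notin U$ for every $k$. The set $\mathcal M(X)\setminus U$ is closed in the compact metrizable space $\mathcal M(X)$, so it is compact. After passing to a subsequence, we may therefore assume that $\mathcal E_{n_k}(y_k)\to\nu$ for some $\nu\in\mathcal M(X)\setminus U$.

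Next we check that $\nu$ is $f$-invariant. For any $\varphi\in C(X)$ the sum telescopes:
\[
\left|\int\varphi\circ f\,d\mathcal E_{n_k}(y_k)-\int\varphi\,d\mathcal E_{n_k}(y_k)\right|
=\frac1{n_k}\bigl|\varphi(f^{n_k}y_k)-\varphi(y_k)\bigr|\le\frac{2\|\varphi\|_\infty}{n_k}\to0.
\]
Since $\varphi\circ f$ is continuous, letting $k\to\infty$ gives $\int\varphi\circ f\,d\nu=\int\varphi\,d\nu$. As this holds for every $\varphi\in C(X)$, we get $f_*\nu=\nu$, so $\nu\in\mathcal M_f(X)\subset U$.

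This contradicts $\nu\in\mathcal M(X)\setminus U$. Hence some $N$ exists with $\mathcal E_n(y)\in U$ for all $y\in X$ and all $n\ge N$. The argument is routine and contains no real obstacle. The only points to watch are that $U$ is open, which makes its complement closed and hence compact, and that the telescoping bound is uniform in $y$, which is exactly what lets us handle an arbitrary sequence of points $y_k$.
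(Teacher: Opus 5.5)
Your proposal is correct and takes essentially the same route as the paper: negate the conclusion, extract a weak-star convergent subsequence of the empirical measures $\mathcal E_{n_k}(y_k)$, observe that the limit is $f$-invariant and hence lies in $U$, and contradict. You supply details the paper leaves implicit (the telescoping bound, and that the limit lies in the closed set $\mathcal M(X)\setminus U$, which is the paper's tacit use of the openness of $U$), but the argument is the same.
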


\begin{proof}
If not, there exist $n_k\to\infty$ and $y_k\in X$ with $\mathcal E_{n_k}(y_k)\notin U$. By compactness, a subsequence converges to some $\mu\in\mathcal M(X)$. Clearly $\mu\in\mathcal M_f(X)$, so $\mu\in U$, contradicting $\mathcal E_{n_k}(y_k)\notin U$ for all $k$.
\end{proof}

Now we state and prove the main result of this section. Write $I_\phi(\mu)=\int\phi\,d\mu$.

\begin{theorem}\label{thm:variational principle for preimage pressure}
Let $f:X\to X$ be continuous on a compact metric space and let $\phi\in C(X)$. Then
\[
 P_m(f,\phi)
 =\sup_{\mu\in\mathcal M_f(X)}
 \left\{h^*_{m,\mu}(f)+\int\phi\,d\mu\right\},\quad
 P_p(f,\phi)
 =\sup_{\mu\in\mathcal M_f(X)}
 \left\{h^*_{p,\mu}(f)+\int\phi\,d\mu\right\}.
\]
\end{theorem}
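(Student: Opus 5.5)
We prove both inequalities for the $m$-type pressure; the $p$-type case is the same argument with the endpoint $x$ fixed first and the supremum over $x$ taken outside. The key observation is that $S_n\phi(y)=n\int\phi\,d\mathcal E_n(y)$. Hence on the set $P_n(x,G)$, where $G$ is a small neighbourhood of $\mu$, the weight $\exp(S_n\phi(y))$ equals $\exp(n(I_\phi(\mu)\pm\gamma))$, because $I_\phi$ is weak-star continuous.

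\emph{Lower bound.} Fix $\mu\in\mathcal M_f(X)$, $\varepsilon>0$ and $\gamma>0$, and choose $G\ni\mu$ with $|I_\phi(\theta)-I_\phi(\mu)|<\gamma$ for all $\theta\in G$. For every $x$ and every $(n,\varepsilon)$-separated set $E\subset P_n(x,G)\subset f^{-n}x$,
\[
\Lambda(n,\varepsilon,f^{-n}x,\phi)\ge \#E\,e^{n(I_\phi(\mu)-\gamma)}.
\]
Taking the supremum over $x$ and $E$ and then the $\limsup$ gives $P_m(f,\phi,\varepsilon)\ge h^*_{m,\mu}(f,\varepsilon;G)+I_\phi(\mu)-\gamma\ge h^*_{m,\mu}(f,\varepsilon)+I_\phi(\mu)-\gamma$. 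Letting $\gamma\to0$ and then $\varepsilon\to0$ yields the inequality ``$\ge$''.

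\emph{Upper bound.} Here we use a covering argument rather than the explicit measure construction of Lemma~\ref{lem:variational inequality2}. Fix $\varepsilon$ and $\gamma$.
\begin{enumerate}
\item For each $\mu\in\mathcal M_f(X)$, pick an open $G_\mu\ni\mu$ on which $I_\phi$ oscillates by less than $\gamma$ and such that $h^*_{m,\mu}(f,\varepsilon;G_\mu)<h^*_{m,\mu}(f,\varepsilon)+\gamma$. (If $h^*_{m,\mu}(f,\varepsilon)=-\infty$, because the sets $P_n$ are eventually empty, replace the bound by an arbitrary large negative constant.)
\item By compactness of $\mathcal M_f(X)$, finitely many of these sets, $G_{\mu_1},\dots,G_{\mu_k}$, cover it. Let $U$ be their union, an open neighbourhood of $\mathcal M_f(X)$.
\item By Lemma~\ref{lem:invariance lemma}, for $n\ge N$ every empirical measure $\mathcal E_n(y)$ lies in $U$. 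So any separated set $E\subset f^{-n}x$ splits as $E=\bigcup_i E\cap P_n(x,G_{\mu_i})$.
\item Each piece is separated, so its weight is at most $s(n,\varepsilon,P_n(x,G_{\mu_i}))\,e^{n(I_\phi(\mu_i)+\gamma)}$. Summing gives
\[
\sup_x\Lambda(n,\varepsilon,f^{-n}x,\phi)\le k\max_i \sup_x s(n,\varepsilon,P_n(x,G_{\mu_i}))\,e^{n(I_\phi(\mu_i)+\gamma)}.
\]
\item Since $k$ is fixed, $\limsup\frac1n\log$ of a finite sum equals the maximum of the individual $\limsup$s. Therefore
\[
P_m(f,\phi,\varepsilon)\le \max_i\{h^*_{m,\mu_i}(f,\varepsilon)+I_\phi(\mu_i)\}+2\gamma\le \sup_\mu\{h^*_{m,\mu}(f,\varepsilon)+I_\phi(\mu)\}+2\gamma.
\]
\end{enumerate}

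The main obstacle is passing $\varepsilon\to0$ in this last bound, because we need the supremum and the limit in $\varepsilon$ to commute. This is resolved by monotonicity. The quantity $h^*_{m,\mu}(f,\varepsilon)$ is nonincreasing in $\varepsilon$, so
\[
\sup_\mu\{h^*_{m,\mu}(f,\varepsilon)+I_\phi(\mu)\}\le \sup_\mu\{h^*_{m,\mu}(f)+I_\phi(\mu)\}
\]
for each fixed $\varepsilon$. Letting $\gamma\to0$ and then $\varepsilon\to0$ finishes the proof.

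For the $p$-type pressure, we fix $x$ in the argument above, and the finite cover may be chosen depending on $x$. The upper bound gives $\lim_\varepsilon\limsup_n\frac1n\log\Lambda(n,\varepsilon,f^{-n}x,\phi)\le\sup_\mu\{h^*_{p,\mu}(f,x,\varepsilon)+I_\phi(\mu)\}$ for each $\varepsilon$. By the same monotonicity this is at most $\sup_\mu\{\lim_\varepsilon h^*_{p,\mu}(f,x,\varepsilon)+I_\phi(\mu)\}$, which in turn is at most $\sup_\mu\{h^*_{p,\mu}(f)+I_\phi(\mu)\}$. For the lower bound, fix $\mu$ and $x$ first, run the same estimate, and then take the supremum over $x$.
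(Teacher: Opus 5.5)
Your proposal is correct and is essentially the paper's proof. The lower bound is the same: you take a neighbourhood of $\mu$ on which $\int\phi$ varies by less than $\gamma$. The upper bound is also the same: a finite cover of $\mathcal M_f(X)$, Lemma~\ref{lem:invariance lemma}, and splitting a separated subset of $f^{-n}x$ according to where its empirical measures fall.

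Your explicit monotonicity-in-$\varepsilon$ step and the $-\infty$ caveat only make precise what the paper leaves implicit when it lets $\eta\to0$ and then $\varepsilon\to0$. One slip in the $p$-type paragraph: at fixed $\varepsilon$ the bound you get is for $\limsup_n\frac1n\log\Lambda(n,\varepsilon,f^{-n}x,\phi)$, not for its limit as $\varepsilon\to0$. Take $\varepsilon\to0$ only after the monotonicity step has made the right-hand side independent of $\varepsilon$.
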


\begin{proof}
\emph{Step 1.}
Fix $\mu\in\mathcal M_f(X)$, $\varepsilon>0$, and $\eta>0$. Since $\phi$ is continuous,
\[
 V_\eta(\mu)=\Bigl\{\lambda\in\mathcal M(X):\Bigl|\int\phi\,d\lambda-\int\phi\,d\mu\Bigr|<\eta\Bigr\}
\]
is a neighbourhood of $\mu$. Let $G\ni\mu$ be any neighbourhood contained in $V_\eta(\mu)$. If $y\in P_n(x,G)$, then $\mathcal E_n(y)\in G\subset V_\eta(\mu) $. So
\[
\frac{1}{n}S_n\phi(y)=\int\phi\,d\mathcal E_n(y)\ge I_\phi(\mu)-\eta.
\]
Hence for any $(n,\varepsilon)$-separated set $E\subset P_n(x,G)$,
\[
\sum_{y\in E}\exp(S_n\phi(y))\ge \#E\cdot\exp\bigl(n(I_\phi(\mu)-\eta)\bigr).
\]
Consequently,
\[
P_m(f,\phi,\varepsilon)\ge h^*_{m,\mu}(f,\varepsilon;G)+I_\phi(\mu)-\eta.
\]
This gives
\[
P_m(f,\phi)\ge h^*_{m,\mu}(f)+I_\phi(\mu).
\]
Since $\mu$ is arbitrary,
\[
P_m(f,\phi)\ge\sup_{\mu\in\mathcal M_f(X)}\bigl\{h^*_{m,\mu}(f)+\int\phi\,d\mu\bigr\}.
\]

\emph{Step 2.}
Fix $\varepsilon>0$ and $\eta>0$. For each $\mu\in\mathcal M_f(X)$, choose an open neighbourhood $W_\mu$ such that
\[
 h^*_{m,\mu}(f,\varepsilon;W_\mu)
 \le h^*_{m,\mu}(f,\varepsilon)+\eta,
\]
and
\[
 \left|\int\phi\,d\lambda-\int\phi\,d\mu\right|<\eta
 \qquad\text{for every }\lambda\in W_\mu.
\]
By compactness, finitely many such neighbourhoods cover $\mathcal M_f(X)$:
\[
\mathcal M_f(X)\subset W_{\mu_1}\cup\cdots\cup W_{\mu_r}.
\]
Set $U=\bigcup_{i=1}^r W_{\mu_i}$. By Lemma~\ref{lem:invariance lemma}, there exists $N\ge1$ such that $\mathcal E_n(y)\in U$ for all $n\ge N$ and $y\in X$.

For $n\ge N$ and $x\in X$, let $E\subset f^{-n}x$ be an $(n,\varepsilon)$-separated set. Partition $E$ into $E_1,\dots,E_r$ where $E_i=\{y\in E:\mathcal E_n(y)\in W_{\mu_i}\}$, choosing the smallest such index if there is more than one. This gives a disjoint decomposition. Then $E_i\subset P_n(x,W_{\mu_i})$, and for $y\in E_i$,
\[
S_n\phi(y)=n\int\phi\,d\mathcal E_n(y)\le n\bigl(I_\phi(\mu_i)+\eta\bigr).
\]
Thus
\[
\sum_{y\in E}\exp(S_n\phi(y))\le\sum_{i=1}^r\exp\bigl(n(I_\phi(\mu_i)+\eta)\bigr)\,\#E_i.
\]
Since $E_i$ is $(n,\varepsilon)$-separated and contained in $P_n(x,W_{\mu_i})$,
\[
\#E_i\le\sup_{z\in X}s\bigl(n,\varepsilon,P_n(z,W_{\mu_i})\bigr).
\]
Therefore,
\[
P_m(f,\phi,\varepsilon)\le\max_{1\le i\le r}\bigl(I_\phi(\mu_i)+\eta+h^*_{m,\mu_i}(f,\varepsilon;W_{\mu_i})\bigr).
\]
By the choice of $W_{\mu_i}$,
\[
P_m(f,\phi,\varepsilon)\le\max_{1\le i\le r}\bigl(I_\phi(\mu_i)+h^*_{m,\mu_i}(f,\varepsilon)+2\eta\bigr).
\]
Letting $\eta\to0$ and then $\varepsilon\to0$ gives
\[
P_m(f,\phi)\le\sup_{\mu\in\mathcal M_f(X)}\bigl\{h^*_{m,\mu}(f)+\int\phi\,d\mu\bigr\}.
\]

Combining both bounds completes the proof of the first variational principle; the second one follows similarly.
\end{proof}

\begin{remark}
	In \cite{ChengNewhouse2005}, Cheng and Newhouse introduced another version of topological preimage entropy:
	\[
	h_{\mathrm{pre}}(f)=\lim_{\varepsilon\to0}\limsup_{n\to\infty}
	\frac1n\log \sup_{x\in X,\,k\ge n}s(n,\varepsilon,f^{-k}x).
	\]
	For a continuous potential $\phi\in C(X)$, this entropy was generalized to the preimage pressure in \cite{ZengYanZhang2007}:
	\[
	P_{\mathrm{pre}}(f,\phi)
	=\lim_{\varepsilon\to0}\limsup_{n\to\infty}\frac{1}{n}\log\sup_{x\in X,\,k\ge n}
	\Lambda(n,\varepsilon,f^{-k}x,\phi).
	\]
	By introducing
	\[
	h_{\mathrm{pre},\mu}^{*}(f)=\lim_{\varepsilon\to0}\inf_{G\ni\mu}\limsup_{n\to\infty}\frac1n\log\sup_{x\in X,\,k\ge n}
	s\bigl(n,\varepsilon,f^{-k}x\cap\{y\in X:\mathcal{E}_n(y)\in G\}\bigr),
	\]
	we can also prove, by a similar approach, that
	\[
	P_{\mathrm{pre}}(f,\phi)
	=\sup_{\mu\in\mathcal{M}_{f}(X)}
	\left\{h_{\mathrm{pre},\mu}^{*}(f)+\int\phi\,d\mu\right\}.
	\]
	In particular, when $\phi=0$, this reduces to a variational principle linking $h_{\mathrm{pre}}(f)$ and $h_{\mathrm{pre},\mu}^{*}(f)$.
\end{remark}

\section{Uniform separation of preimages does not imply forward expansiveness}\label{sec:an example}

First, we give a non-invertible continuous map which has uniform separation of preimages but is not forward expansive.

\begin{example}
Let $\mathbb T=\mathbb R/\mathbb Z$ be the circle with its usual metric
\[
d_{\mathbb T}(a,b)=\min_{m\in\mathbb Z}|a-b-m|.
\]
On the two-dimensional torus $\mathbb T^2$, we consider the product metric
\[
d_{\mathbb T^2}\bigl((\theta,\varphi),(\theta',\varphi')\bigr)
=\max\{d_{\mathbb T}(\theta,\theta'),d_{\mathbb T}(\varphi,\varphi')\}.
\]
Define $F:\mathbb T^2\to\mathbb T^2$ by
\[
F(\theta,\varphi)=(2\theta \bmod 1,\varphi).
\]
Then the continuous map $F$ is non-invertible and has uniform separation of preimages, yet it is not forward expansive.
\end{example}

\begin{proof}

\emph{Step 1.} The map $F$ is non-invertible.

For every $(\theta,\varphi)\in\mathbb T^2$, the two points
\[
\left(\frac{\theta}{2},\varphi\right),\qquad
\left(\frac{\theta+1}{2},\varphi\right)
\]
are distinct and both are mapped to $(\theta,\varphi)$ by $F$. Hence $F$ is not invertible.

\emph{Step 2.} The map $F$ has uniform separation of preimages.

Suppose $F(\theta,\varphi)=F(\theta',\varphi')$. Then we have
$\varphi=\varphi'$ and $2\theta=2\theta'\pmod 1$.
Therefore either $\theta=\theta'$, or $d_{\mathbb T}(\theta,\theta')=\frac12$. Consequently, if
\[
F(\theta,\varphi)=F(\theta',\varphi')
\quad\text{and}\quad
(\theta,\varphi)\neq(\theta',\varphi'),
\]
then necessarily
\[
d_{\mathbb T^2}\bigl((\theta,\varphi),(\theta',\varphi')\bigr)=\frac12.
\]
Thus $F$ has uniform separation of preimages with any separation constant
$0<\varepsilon_0\leq \frac12$.

\emph{Step 3.} The map $F$ is not forward expansive.

Let $\delta>0$ be arbitrary. Choose $\eta\in\mathbb T$ such that
\[
0<d_{\mathbb T}(0,\eta)<\delta.
\]
Set $x=(0,0)$ and $y=(0,\eta)$. Then $x\neq y$. However, for every $n\geq 0$,
\[
F^n(x)=(0,0),\qquad
F^n(y)=(0,\eta).
\]
Hence
\[
d_{\mathbb T^2}(F^n x,F^n y)=d_{\mathbb T}(0,\eta)<\delta
\qquad\forall n\geq 0.
\]
Therefore $F$ is not forward expansive.
\end{proof}

\end{document}